\newtheorem{thm}{Theorem}[section]
\newtheorem{cor}[thm]{Corollary}
\newtheorem{lem}[thm]{Lemma}
\newtheorem{prop}[thm]{Proposition}
\newtheorem{defn}[thm]{Definition}
\numberwithin{equation}{section}
\begin{document}

\title{\bf Affine connections on the algebra of differential forms}

\author{Yong Wang$^*$, Shuang Wang}

\thanks{{\scriptsize
\hskip -0.4 true cm \textit{2010 Mathematics Subject Classification:}
53C40; 53C42.
\newline \textit{Key words and phrases:} The algebra of differential forms; semi-symmetric metric connection; distribtion; Gauss-Codazzi-Ricci equations; Lie derivative; canonical connection; Schouten connection; Vrancreanu connection
\newline \textit{Corresponding author:} Yong Wang}}

\maketitle

\begin{abstract}
 In this paper, we define the semi-symmetric metric
 connection on the algebra of differential forms. We compute some special semi-symmetric metric
 connections and their curvature tensor and their Ricci tensor on the algebra of differential forms. We study the distribution
  on the algebra of differential forms and we get its Gauss-Codazzi-Ricci equations associated to the semi-symmetric metric
 connection. We also study the Lie derivative of the distribution
  on the algebra of differential forms. We define the canonical connection and the Schouten connection and the Vrancreanu connection
  on the algebra of differential forms and get some properties of these connections.
\end{abstract}

\vskip 0.2 true cm

%------------------------------------------------------------------------------------%

\pagestyle{myheadings}
\markboth{\rightline {\scriptsize Wang}}
         {\leftline{\scriptsize Affine connections on the algebra of differential forms}}

\bigskip
\bigskip

%------------------------------------------------------------------------------------%
%------------------------------------------------------------------------------------%

\section{ Introduction}
 \indent  The definition of a semi-symmetric metric connection was given by H. Hayden in \cite{Ha}. In 1970, K. Yano \cite{Ya}
        considered a semi-symmetric metric connection and studied some of its properties. He proved that a Riemannian manifold admitting
         the semi-symmetric metric connection has vanishing curvature tensor if and only if it is conformally flat. Motivated by the Yano'
         result,
         in \cite{SO}, Sular and \"{O}zgur studied warped product manifolds with a
         semi-symmetric metric connection, they computed curvature of semi-symmetric metric connection
          and considered Einstein warped product manifolds with a semi-symmetric metric connection. In \cite{Wa1},
          we studied non integrable distributions in a Riemannian manifold with a semi-symmetric metric connection,
           a semi-symmetric non-metric connection and a statistical connection. We obtained
            the Gauss, Codazzi, and Ricci equations for non integrable distributions with respect to
             the semi-symmetric metric connection, the semi-symmetric non-metric connection and the statistical connection. We also gave some applications of our Gauss-Codazzi-Ricci equations.
 In \cite{Va}, Van stated the Lie derivative of normal connection on a submanifold of the Riemannian
manifold. He introduced the Lie derivative of the normal curvature tensor on the submanifold and gave
some relations between the normal curvature tensor on the submanifold and curvature tensor on the
ambient manifold in the sense of the Lie derivative of normal connection. In \cite{YE}, Youssef and Elsayed provided a global investigation of the geometry of parallelizable manifolds (or absolute parallelism geometry) frequently used for application. They discussed the different
linear connections and curvature tensors from a global point of view. They gave an existence and
uniqueness theorem for a remarkable linear connection, called the canonical connection.\\
\indent
 In \cite{BG}, Bruce and Grabowski examined the notion of a Riemannian $\mathbb{Z}_2^n$ manifold.
 They showed that the basic notions and tenets of Riemannian geometry directly generalized to the setting of $\mathbb{Z}_2^n$-geometry.
For example, the fundamental theorem holded in the higher graded setting. They pointed out the
similarities and differences with Riemannian supergeometry. In \cite{Wa2}, we defined the semi-symmetric metric
 connection on super Riemannian manifolds. We computed the semi-symmetric metric connection and its curvature tensor
 and its Ricci tensor on super warped product spaces. We introduced two kind of super warped product spaces
 with the semi-symmetric metric connection and gave the conditions under which these two super warped product spaces
  with the semi-symmetric metric connection were the Einstein super spaces with the semi-symmetric metric connection.
 In \cite{MS}, Monterde and Sanchez characterized the homogeneous Riemannian graded metrics on the algebra
of differential forms for which the exterior derivative was a Killing
graded vector field. It was shown that all of them were odd,
and were naturally associated to an underlying smooth Riemannian metric.
It was also shown that all of them were Ricci-flat in the graded sense, and
had a graded Laplacian operator that annihilates the whole algebra of
differential forms. In this paper, we define the semi-symmetric metric
 connection on the algebra of differential forms. We compute some special semi-symmetric metric
 connections and their curvature tensor and their Ricci tensor on the algebra of differential forms. We study the distribution
  on the algebra of differential forms and we get its Gauss-Codazzi-Ricci equations associated to the semi-symmetric metric
 connection. We also study the Lie derivative of the distribution
  on the algebra of differential forms. We define the canonical connection and the Schouten connection and the Vrancreanu connection
  on the algebra of differential forms and get some properties of these connections.
\\
  \indent In Section 2, we define the semi-symmetric metric
 connection on the algebra of differential forms. We compute some special semi-symmetric metric
 connections and their curvature tensor and their Ricci tensor on the algebra of differential forms. In Section 3, we study the distribution
  on the algebra of differential forms and we get its Gauss-Codazzi-Ricci equations associated to the semi-symmetric metric
 connection.
  In Section 4, we also study the Lie derivative of the distribution
  on the algebra of differential forms. In Section 5, we define the canonical connection and the Schouten connection and the Vrancreanu connection
  on the algebra of differential forms and get some properties of these connections.

%------------------------------------------------------------------------------------%

\vskip 1 true cm

\section{A semi-symmetric metric
 connection on on the algebra of differential forms}

 Let $M$ be an $m$-dimensional smooth manifold and $\Omega(M)$ be its corresponding $\mathbb{Z}_2$-graded commutative $\mathbb{R}$-algebra of differential forms, then the pair $(M,\Omega(M))$ is an $(m,m)$-dimensional $\mathbb{Z}_2$-graded manifold. Let ${\rm Der}\Omega(M)$ be the left graded $\Omega(M)$-module of all derivations on $\Omega(M)$. We denote the elements in ${\rm Der}\Omega(M)$ by $X,Y,Z,\cdots$ and denote the smooth vector fields on $M$ by $\overline{X},\overline{Y},\overline{Z},\cdots$.
 ${\rm Der}\Omega(M)$ is a graded Lie algebra with the usual graded commutator  It can also be regarded as a right graded $\Omega(M)$-module with multiplication $X\alpha=(-1)^{|\alpha||X|}\alpha X.$   Let ${\rm Hom}({\rm Der}\Omega(M),\Omega(M))$ be the right graded $\Omega(M)$-module of $\Omega(M)$-linear
graded homomorphisms from the derivations ${\rm Der}\Omega(M)$ into the superfunctions $\Omega(M)$. This is the module of graded differential $1$-forms on $(M,\Omega(M))$.
   \begin{defn}
   (\cite{MS})
 A graded metric on the algebra of differential forms is a graded
symmetric, non-degenerate, bilinear map
$G:{\rm Der}\Omega(M)\times {\rm Der}\Omega(M)\rightarrow \Omega(M); (X,Y)\rightarrow\left<X,Y;G\right>.$
That is, a map satisfying the following conditions:\\
(1) $\left<X,Y;G\right>=(-1)^{|X||Y|}\left<Y,X;G\right>$,\\
(2) $\left<\alpha X,Y;G\right>=\alpha\left<X,Y;G\right>=(-1)^{|X||\alpha|}\left<X,\alpha Y;G\right>$, $\alpha\in \Omega(M)$,\\
(3) The linear map $X\mapsto \left<\cdot,X; G \right>$ is an isomorphism between the $\Omega(M)$-
modules ${\rm Der}\Omega(M)$ and ${\rm Hom}({\rm Der}\Omega(M),\Omega(M))$.\\
\end{defn}
A graded metric is homogeneous of degree $k\in \mathbb{Z}$ if $|\left<X,Y;G\right>|=|X|+|Y|+k.$
A graded metric is even (resp. odd) if  $|\left<X,Y;G\right>|=|X|+|Y|({\rm mod}~~ 2)$
(resp.$|\left<X,Y;G\right>|=|X|+|Y|+1({\rm mod} ~~2)$).
Let $U$ be an open coordinate neighborhood in $M$.
Let $\left\{\overline{X_1},\cdots,\overline{X_m}\right\}$ be
a local frame of vector fields in $U$. It is easy to check that $\left\{L_{\overline{X_1}},\cdots,L_{\overline{X_m}},i_{\overline{X_1}},\cdots,i_{\overline{X_m}}\right\}$ is a local frame for ${\rm Der}\Omega(U)$(cf. \cite{FN}). Thus, a graded metric
is completely determined by its action on the pairs of derivations $(L_{\overline{X}},L_{\overline{Y}})$,$(L_{\overline{X}},i_{\overline{Y}})$, and $(i_{\overline{X}},i_{\overline{Y}})$
where $\overline{X}$ and $\overline{Y}$ are vector fields on $M$.
\begin{defn}(Definition 4.1 in \cite{MS}) An graded connection on $\Omega(M)$ is a $\mathbb{Z}_2$-degree preserving map\\
$$\nabla:~~ {\rm Der}\Omega(M)\times {\rm Der}\Omega(M)\rightarrow {\rm Der}\Omega(M);~~(X,Y)\mapsto \nabla_XY,$$
that satisfies the following\\
1) Bi-linearity $$\nabla_X(Y+Z)=\nabla_XY+\nabla_XZ;~~\nabla_{X+Y}Z=\nabla_XZ+\nabla_YZ,$$
2)$\Omega(M)$-linearrity in the first argument
$$\nabla_{\alpha X}Y=\alpha\nabla_XY,$$
3)The Leibniz rule
$$\nabla_X(\alpha Y)=X(\alpha)Y+(-1)^{|X||\alpha|}\alpha\nabla_XY,$$
for all homogeneous $X,Y,Z\in {\rm Der}\Omega(M)$ and $\alpha\in \Omega(M)$.
\end{defn}
\begin{defn}(\cite{MS})
 The torsion tensor of an affine connection \\
 $T_\nabla:~~{\rm Der}\Omega(M)\otimes_{\Omega(M)} {\rm Der}\Omega(M)\rightarrow {\rm Der}\Omega(M)$
 is
defined as
$$T_\nabla(X,Y):=\nabla_XY-(-1)^{|X||Y|}\nabla_YX-[X,Y],$$
for any (homogeneous) $X, Y \in {\rm Der}\Omega(M)$. An affine connection is said to be symmetric if the torsion vanishes.
\end{defn}
\begin{defn}(\cite{MS})
Let $G$ be a graded metric on $\Omega(M)$ and $\nabla$ an even graded connection. $\nabla$ is said to be metric compatible if
and only if
$$ X\left<Y,Z;G\right>=\left<\nabla_XY,Z;G\right>+(-1)^{|X||Y|}\left<Y,\nabla_XZ;G\right>,$$
for any $X, Y, Z\in {\rm Der}\Omega(M)$.
\end{defn}
\begin{thm}(Theorem 4.2 in \cite{MS})There is a unique symmetric (torsionless) and metric compatible
affine graded connection $\nabla^L$ which satisfies the Koszul formula
\begin{align}
2\left<\nabla^L_XY,Z;G\right>&=X\left<Y,Z;G\right>+\left<[X,Y],Z;G\right>\\
&+(-1)^{|X|(|Y|+|Z|)}(Y\left<Z,X;G\right>-\left<[Y,Z],X;G\right>)\notag\\
&-(-1)^{|Z|(|X|+|Y|)}(Z\left<X,Y;G\right>-\left<[Z,X],Y;G\right>),\notag
\end{align}
for all homogeneous $X,Y,Z\in {\rm Der}\Omega(M)$.
\end{thm}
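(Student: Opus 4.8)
The plan is to follow the classical proof of the fundamental theorem of (pseudo-)Riemannian geometry, adapted to the graded setting, splitting it into a uniqueness argument (which simultaneously produces the Koszul formula) followed by an existence argument (which uses that formula as a definition). Throughout, all of $X,Y,Z,\alpha$ are taken homogeneous, and signs are tracked with the Koszul convention.

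First, for uniqueness I would assume $\nabla^L$ is an even graded connection that is both torsionless and metric compatible, and derive the stated identity. Writing the metric-compatibility relation of Definition 2.4 for the three cyclic arrangements of $X,Y,Z$, namely
\begin{align}
X\left<Y,Z;G\right>&=\left<\nabla^L_XY,Z;G\right>+(-1)^{|X||Y|}\left<Y,\nabla^L_XZ;G\right>,\notag\\
Y\left<Z,X;G\right>&=\left<\nabla^L_YZ,X;G\right>+(-1)^{|Y||Z|}\left<Z,\nabla^L_YX;G\right>,\notag\\
Z\left<X,Y;G\right>&=\left<\nabla^L_ZX,Y;G\right>+(-1)^{|Z||X|}\left<X,\nabla^L_ZY;G\right>,\notag
\end{align}
I would form the combination (first)$+(-1)^{|X|(|Y|+|Z|)}$(second)$-(-1)^{|Z|(|X|+|Y|)}$(third). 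Using the graded symmetry $\left<X,Y;G\right>=(-1)^{|X||Y|}\left<Y,X;G\right>$ of Definition 2.1(1) to reorder each pairing, the six $\nabla^L$-terms on the right organize into three groups: four of them combine in pairs, via the torsionless relation $\nabla^L_AB-(-1)^{|A||B|}\nabla^L_BA=[A,B]$, into $\left<[Y,Z],X;G\right>$ and $\left<[Z,X],Y;G\right>$, while the remaining two, again by torsion-freeness, produce $2\left<\nabla^L_XY,Z;G\right>$ together with $\left<[X,Y],Z;G\right>$. This is exactly the Koszul formula (2.1). Since it holds for all $Z$ and, by Definition 2.1(3), the assignment $W\mapsto\left<\cdot,W;G\right>$ is an isomorphism onto $\mathrm{Hom}(\mathrm{Der}\,\Omega(M),\Omega(M))$, the element $\nabla^L_XY$ is uniquely determined; hence at most one such connection exists.

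For existence I would reverse the logic and take the right-hand side of (2.1) as the definition. For fixed homogeneous $X,Y$, let $2F_{X,Y}(Z)$ denote the right-hand side of (2.1). The key point is that $F_{X,Y}$ is a graded $\Omega(M)$-linear map in $Z$ of degree $|X|+|Y|$ (with the Koszul sign $(-1)^{(|X|+|Y|)|\alpha|}$ when a coefficient $\alpha$ is extracted): additivity is immediate, and in checking linearity one uses the derivation property of each vector-field action on $\alpha$ together with the graded Leibniz identity for the commutators $[X,\alpha Z]$, $[Y,\alpha Z]$, so that every term in which a derivation differentiates $\alpha$ cancels against the corresponding bracket term. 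By the isomorphism of Definition 2.1(3), $F_{X,Y}$ is then represented by a unique derivation, which we call $\nabla^L_XY$, with $\left<\nabla^L_XY,\cdot;G\right>=F_{X,Y}(\cdot)$, and it is degree preserving. It remains to verify that $(X,Y)\mapsto\nabla^L_XY$ satisfies the three axioms of Definition 2.2 — bilinearity, $\Omega(M)$-linearity in the first slot, and the Leibniz rule $\nabla^L_X(\alpha Y)=X(\alpha)Y+(-1)^{|X||\alpha|}\alpha\nabla^L_XY$ — and that it is torsionless and metric compatible; each follows by substituting the defining formula, expanding the relevant bracket and derivative terms, and invoking non-degeneracy of $G$ to pass from the pairing identities to identities between derivations.

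I expect the main obstacle to be the sign bookkeeping rather than any conceptual difficulty: in the graded category every reordering of a pairing and every application of the Leibniz rule introduces a Koszul sign $(-1)^{|\cdot||\cdot|}$, and the two delicate verifications — the $\Omega(M)$-linearity of $F_{X,Y}$ in $Z$ for existence, and the collapse of the six $\nabla^L$-terms for uniqueness — both hinge on these signs matching exactly. The most error-prone step is confirming that the weights $(-1)^{|X|(|Y|+|Z|)}$ and $(-1)^{|Z|(|X|+|Y|)}$ on the cyclic permutations are precisely the ones that cancel the derivative-of-coefficient contributions and reinforce the symmetric $\nabla^L$-terms; once the conventions of Definitions 2.1–2.4 are fixed, this becomes a direct, if lengthy, computation.
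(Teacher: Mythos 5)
Your proposal is essentially the standard graded adaptation of the Koszul argument, and it is correct: the combination (first)$+(-1)^{|X|(|Y|+|Z|)}$(second)$-(-1)^{|Z|(|X|+|Y|)}$(third), after reordering each pairing with the graded symmetry of Definition 2.1(1) and applying torsion-freeness three times, does reproduce formula (2.1) with exactly the stated signs (I checked the bookkeeping), and non-degeneracy via Definition 2.1(3) then gives uniqueness and, reading the formula backwards, existence. Note that the paper itself offers no proof of this statement --- it is quoted verbatim as Theorem 4.2 of the cited reference of Monterde and Sanchez-Valenzuela --- so there is no internal argument to compare against; your route is the one used in that source, and the only point deserving a little extra care in a full write-up is the existence step, where one must verify that $Z\mapsto F_{X,Y}(Z)$ is $\Omega(M)$-linear with the correct Koszul signs so that it genuinely lies in ${\rm Hom}({\rm Der}\,\Omega(M),\Omega(M))$, and that the resulting $\nabla^L_XY$ is degree preserving.
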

Given a Riemannian metric $g$ on $M$ its corresponding graded metric $G_g$ is given by
$$\left<L_{\overline{X}},L_{\overline{Y}};G_g\right>=d(g(\overline{X},\overline{Y})),$$
$$\left<L_{\overline{X}},i_{\overline{Y}};G_g\right>=\left<i_{\overline{X}},L_{\overline{Y}};G_g\right>=g(\overline{X},\overline{Y}),$$
$$\left<i_{\overline{X}},i_{\overline{Y}};G_g\right>=0,$$
for the vector fields $\overline{X}$,$\overline{Y}$ on $M$.
\begin{defn}(\cite{MS})
The Riemannian curvature tensor of an affine graded connection
is defined as
\begin{equation}
R_\nabla(X, Y )Z =\nabla_X\nabla_Y-(-1)^{|X||Y|}\nabla_Y\nabla_X-\nabla_{[X,Y]}Z,
\end{equation}
for all $X, Y$ and $Z \in {\rm Der}\Omega(M)$.
\end{defn}
Let $\{\overline{X_k}\}_{k=1}^m$ be an orthonormal frame for $g$. Then $\{L_{\overline{X_k}},i_{\overline{X_k}}\}_{k=1}^m$ is a basis of graded
derivations that satisfies the following relations:
\begin{equation}
\left<L_{\overline{X_k}},L_{\overline{X_l}}\right>=0=\left<i_{\overline{X_k}},i_{\overline{X_l}};G_g\right>,~~~
\left<L_{\overline{X_k}},i_{\overline{X_l}};G_g\right>=\left<i_{\overline{X_l}},L_{\overline{X_k}};G_g\right>=\delta_{kl}.
\end{equation}
\begin{defn}(\cite{MS})
 The Ricci curvature tensor of an affine connection is the rank-$2$ covariant tensor
defined as
 \begin{equation}
Ric_\nabla(X, Y ):=\sum_{k=1}^m\left<R_\nabla(L_{\overline{X_k}},X)Y,i_{\overline{X_k}},G_g\right>-(-1)^{|X|+|Y|}
\sum_{l=1}^m\left<R_\nabla(i_{\overline{X_l}},X)Y,L_{\overline{X_l}},G_g\right>
\end{equation}
where $X,Y\in {\rm Der}\Omega(M)$.
\end{defn}
\begin{defn}(\cite{Wa2})Let $P\in {\rm Der}\Omega(M)$ which satisfied $|G|+|P|=0$
and the semi-symmetric metric connection ${\nabla}$ on $(M, \Omega(M),G)$
is given by
\begin{equation}
{\nabla}_XY=\nabla^L_XY+X\cdot \left<Y,P;G\right>-\left<X,Y;G\right>P,
\end{equation}
for any homogenous $X,Y\in {\rm Der}\Omega(M)$.
\end{defn}
We have
\begin{equation}
T_\nabla(X,Y)=X\cdot \left<Y,P;G\right>-(-1)^{|X||Y|}Y\cdot \left<X,P;G\right>,
\end{equation}
In this case, we call that ${\nabla}_XY$ is a semi-symmetric connection. By (2.10) in \cite{Wa2}
then $\nabla$ preserves the metric.
\begin{thm}(Theorem 2.14 in \cite{Wa2})There is a unique semi-symmetric metric compatible
affine connection $\nabla$ on $(M, \Omega(M),G)$.
\end{thm}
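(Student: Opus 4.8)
The plan is to compare any candidate connection with the Levi--Civita connection $\nabla^L$ furnished by Theorem 2.5 and to treat existence and uniqueness separately. Existence is handled by verifying directly that the map of Definition 2.8,
\[
\nabla_XY=\nabla^L_XY+X\cdot\langle Y,P;G\rangle-\langle X,Y;G\rangle P,
\]
is a graded affine connection whose torsion is (2.6) and which is metric compatible in the sense of Definition 2.4. Uniqueness is handled by setting $D(X,Y):=\nabla_XY-\nabla^L_XY$ for an arbitrary semi-symmetric metric connection $\nabla$ and showing that metric compatibility together with the prescribed torsion force $D$ to be the tensor $X\cdot\langle Y,P;G\rangle-\langle X,Y;G\rangle P$; non-degeneracy of $G$ then identifies $\nabla$ with the connection above.

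For existence I would first observe that the two correction terms are $\Omega(M)$-tensorial: bilinearity is immediate, $\Omega(M)$-linearity in the first argument follows from condition (2) of Definition 2.1 together with associativity of the right multiplication $X\alpha=(-1)^{|\alpha||X|}\alpha X$, and the Leibniz rule is inherited from $\nabla^L$ after expanding $\langle\alpha Y,P;G\rangle$ by condition (2). The torsion computation is short: since $\nabla^L$ is torsionless, $T_\nabla(X,Y)$ collapses to $X\cdot\langle Y,P;G\rangle-(-1)^{|X||Y|}Y\cdot\langle X,P;G\rangle$ plus the two terms $-\langle X,Y;G\rangle P+(-1)^{|X||Y|}\langle Y,X;G\rangle P$, and these cancel because graded symmetry gives $(-1)^{|X||Y|}\langle Y,X;G\rangle=\langle X,Y;G\rangle$; this reproduces (2.6). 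Metric compatibility is then verified by substituting the correction terms into the identity of Definition 2.4 and using condition (2) and the hypothesis $|G|+|P|=0$, exactly the computation recorded just before the theorem.

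For uniqueness I would write $d(X,Y,Z):=\langle D(X,Y),Z;G\rangle$ and note that $D$ is tensorial because it is the difference of two connections. Subtracting the compatibility identity for $\nabla$ from that for $\nabla^L$ gives $\langle D(X,Y),Z;G\rangle+(-1)^{|X||Y|}\langle Y,D(X,Z);G\rangle=0$, which, after using the graded symmetry of $G$ and the degree identity $|D(X,Y)|=|X|+|Y|$, becomes the graded antisymmetry $d(X,Y,Z)=-(-1)^{|Y||Z|}d(X,Z,Y)$. Since $\nabla^L$ has no torsion, (2.6) reads $\langle T_\nabla(X,Y),Z;G\rangle=d(X,Y,Z)-(-1)^{|X||Y|}d(Y,X,Z)$. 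Taking the appropriately signed graded cyclic combination
\[
\langle T_\nabla(X,Y),Z;G\rangle \,\pm\, \langle T_\nabla(Z,X),Y;G\rangle \,\mp\, \langle T_\nabla(Y,Z),X;G\rangle
\]
and collapsing it with the antisymmetry relation expresses $2\,d(X,Y,Z)$ entirely in terms of the torsion, hence in terms of $P$. Because $G$ is non-degenerate (condition (3) of Definition 2.1), this determines $D(X,Y)$, and therefore $\nabla_XY$, uniquely.

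The main obstacle will be the graded sign bookkeeping that makes the cyclic Koszul-type sum collapse exactly as the classical contorsion computation does. One must carry the Koszul signs of the graded symmetry $\langle Y,X;G\rangle=(-1)^{|X||Y|}\langle X,Y;G\rangle$, the degree $|P|=-|G|$ forced by $|G|+|P|=0$, and the right-module rule $X\alpha=(-1)^{|\alpha||X|}\alpha X$ consistently through every term; once the correctly signed cyclic sum is pinned down, the three pairs of $d$-terms cancel precisely as in the ungraded case and the remaining verifications are routine.
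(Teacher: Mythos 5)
Your proposal is correct and follows the standard route: existence by direct verification that the explicit formula of Definition 2.8 is a graded connection with torsion (2.6) that preserves $G$ (using $|G|+|P|=0$ for the Koszul signs), and uniqueness by the graded contorsion argument, where the difference tensor $d(X,Y,Z)=\langle\nabla_XY-\nabla^L_XY,Z;G\rangle$ is antisymmetric in the last two slots by metric compatibility and is recovered from the prescribed torsion by the signed cyclic sum, with non-degeneracy of $G$ finishing the job. The paper itself gives no proof here --- it cites Theorem 2.14 and equation (2.10) of \cite{Wa2} and merely records the torsion formula (2.6) --- but your outline is exactly the argument that citation stands in for, and I see no gap beyond the sign bookkeeping you already flag.
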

We also denote $\left<X,Y;G\right>$ by $G(X,Y)$.
\begin{prop}(Proposition 2.15 in \cite{Wa2})The following equality holds
\begin{align}
&R_\nabla(X,Y)Z=R^L(X,Y)Z+(-1)^{(|X|+|Y|)|Z|}\left<Z,\nabla^L_XP;G\right>)Y\\
&-(-1)^{|X||Y|}(-1)^{(|X|+|Y|)|Z|}\left<Z,\nabla^L_YP;G\right>X
-(-1)^{|G(Y,Z)||X|}G(Y,Z)\nabla^L_XP\notag\\
&+(-1)^{|X||Y|}(-1)^{|G(X,Z)||Y|}G(X,Z)\nabla^L_YP
+(-1)^{|X|(|Y|+|Z|)}(-1)^{|Y||Z|}G(Z,P)G(Y,P)X\notag\\
&-(-1)^{|X|(|Y|+|Z|)}G(Y,Z)G(P,P)X
-(-1)^{(|X|+|Y|)|Z|}G(Z,P)G(X,P)Y\notag\\
&+(-1)^{|Y||Z|}G(X,Z)G(P,P)Y
+(-1)^{|X||G(Y,Z)|}G(Y,Z)G(X,P)P\notag\\
&-(-1)^{|X||Y|}(-1)^{|Y||G(X,Z)|}G(X,Z)G(Y,P)P.\notag
\end{align}
\end{prop}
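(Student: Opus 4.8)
The plan is to substitute the defining relation of the semi-symmetric metric connection directly into the curvature formula (2.7) and expand. Write $A(X,Y):=X\cdot\langle Y,P;G\rangle-\langle X,Y;G\rangle P$, so that (2.10) reads $\nabla_XY=\nabla^L_XY+A(X,Y)$, and note that since $|G|+|P|=0$ one has $|P|=|G|$, hence the coefficient $\langle Y,P;G\rangle$ has degree $|Y|$ and $A$ is degree-preserving. Plugging $\nabla=\nabla^L+A$ into $R_\nabla(X,Y)Z=\nabla_X\nabla_YZ-(-1)^{|X||Y|}\nabla_Y\nabla_XZ-\nabla_{[X,Y]}Z$ and multiplying out produces three groups of terms: (a) the purely Levi-Civita terms; (b) the terms linear in $A$ (one factor $A$, one factor $\nabla^L$, together with the $A([X,Y],Z)$ contribution); and (c) the terms quadratic in $A$. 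Group (a) is by definition $R^L(X,Y)Z$, which accounts for the first summand on the right-hand side of (2.12).

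For group (c), I would expand $A(X,A(Y,Z))-(-1)^{|X||Y|}A(Y,A(X,Z))$ using only the module axioms of the graded metric (Definition 2.1(2)) and the sign rule $X\cdot\alpha=(-1)^{|\alpha||X|}\alpha X$. Writing $A(Y,Z)=Y\cdot G(Z,P)-G(Y,Z)P$ and applying $A(X,\cdot)$ to it, the part $X\cdot G(\,\cdot\,,P)$ yields the coefficients $G(Z,P)G(Y,P)X$ and $G(Y,Z)G(P,P)X$ (the fifth and sixth summands of (2.12)), while the part $-G(X,\cdot)P$ yields the term $G(Y,Z)G(X,P)P$ (the ninth summand) together with a spurious $G(Z,P)G(X,Y)P$ term. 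The latter is cancelled, after using $G(Y,X)=(-1)^{|X||Y|}G(X,Y)$, by the corresponding term coming from $-(-1)^{|X||Y|}A(Y,A(X,Z))$, which itself supplies the seventh, eighth and tenth summands by the symmetry $X\leftrightarrow Y$. Thus group (c) reproduces exactly the quadratic summands of (2.12).

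Group (b) is the crux. Here I would use the metric compatibility of $\nabla^L$ in the form $X\langle Z,P;G\rangle=\langle\nabla^L_XZ,P;G\rangle+(-1)^{|X||Z|}\langle Z,\nabla^L_XP;G\rangle$ together with the Leibniz rule to differentiate the coefficients inside $\nabla^L_XA(Y,Z)$ and $\nabla^L_YA(X,Z)$. The $\langle Z,\nabla^L_XP;G\rangle$ and $\langle Z,\nabla^L_YP;G\rangle$ pieces produce the first and second summands of (2.12), while differentiating $G(Y,Z)$ and $G(X,Z)$ produces the third and fourth summands, the multiples of $\nabla^L_XP$ and $\nabla^L_YP$. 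Every remaining term — the multiples of $X$, of $Y$, and of $\nabla^L_XY$, $\nabla^L_YX$ generated by the Leibniz rule and by $A(X,\nabla^L_YZ)$, $A(Y,\nabla^L_XZ)$ — must cancel against the $A([X,Y],Z)$ term; this is where I would invoke torsion-freeness in the form $[X,Y]=\nabla^L_XY-(-1)^{|X||Y|}\nabla^L_YX$ together with a second application of metric compatibility (to $X(G(Y,Z))$ and $Y(G(X,Z))$) to annihilate the residual multiples of $P$ and of the derivations.

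The main obstacle is purely the Koszul sign bookkeeping: each of the roughly two dozen intermediate terms carries its own sign built from $|X|,|Y|,|Z|,|P|$ and $|G|$, and the cancellations in group (b) and the cross-cancellation in group (c) only close up once one has consistently used $|P|=|G|$, $|\langle Z,P;G\rangle|=|Z|$ and $|G(Y,Z)|=|Y|+|Z|+|G|$. No geometric idea is required beyond the Koszul formula, metric compatibility and torsion-freeness of $\nabla^L$; the labour is to confirm that after all substitutions precisely the ten stated summands survive.
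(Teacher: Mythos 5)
Your proposal is correct: the decomposition $\nabla=\nabla^L+A$, the grouping into the $R^L$ part, the terms linear in $A$ (handled via metric compatibility and torsion-freeness, which is where the first four summands arise and the residual multiples of $P$, $\nabla^L_XY$ and $\nabla^L_YX$ cancel against $A([X,Y],Z)$), and the quadratic terms (whose spurious $G(Z,P)G(X,Y)P$ contributions cancel by graded symmetry of $G$) is exactly the standard computation, and the signs you identify all check out using $|P|=|G|$. The paper itself gives no proof of this proposition, citing Proposition 2.15 of \cite{Wa2} instead, and your argument is the expected one carried out there.
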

Let $\nabla^g$ be the Levi-Civita connection on $(M,g)$. By (2.5) and
\begin{equation}
\nabla^L_{L_{\overline{X}}}L_{\overline{Y}}=L_{\nabla^g_{\overline{X}}{\overline{Y}}},
~~\nabla^L_{L_{\overline{X}}}i_{\overline{Y}}=i_{\nabla^g_{\overline{X}}{\overline{Y}}},~~
\nabla^L_{i_{\overline{X}}}L_{\overline{Y}}=i_{\nabla^g_{\overline{X}}{\overline{Y}}}~~
\nabla^L_{i_{\overline{X}}}i_{\overline{Y}}=0,
\end{equation} we have
\begin{prop}Let $P=i_U$ for $U\in \Gamma(M,TM)$ and $G=G_g$ for a Riemannian metric $g$ on $M$, the following equalities hold
\begin{align}
&\nabla_{L_{\overline{X}}}L_{\overline{Y}}=L_{\nabla^g_{\overline{X}}{\overline{Y}}}+g(\overline{Y},U)L_{\overline{X}}-d(g(\overline{X},
\overline{Y}))i_U,\\
&~~\nabla_{L_{\overline{X}}}i_{\overline{Y}}=i_{\nabla^g_{\overline{X}}{\overline{Y}}}-g(\overline{X},\overline{Y})i_U,\notag\\
&\nabla_{i_{\overline{X}}}L_{\overline{Y}}=i_{\nabla^g_{\overline{X}}{\overline{Y}}}
+g(\overline{Y},U)i_{\overline{X}}-g(\overline{X},
\overline{Y})i_U,\notag\\
&\nabla_{i_{\overline{X}}}i_{\overline{Y}}=0.\notag
\end{align}
\end{prop}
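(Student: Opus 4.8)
The plan is to derive all four identities by direct substitution into the defining formula (2.5) of the semi-symmetric metric connection,
\[
\nabla_X Y=\nabla^L_X Y+X\cdot\langle Y,P;G\rangle-\langle X,Y;G\rangle P,
\]
specialized to $P=i_U$ and $G=G_g$, and then evaluated on the four pairs $(X,Y)$ taken from the local frame $\{L_{\overline{X}},i_{\overline{X}}\}$. First I would record the parities that govern the signs: $|L_{\overline{X}}|=0$ and $|i_{\overline{X}}|=1$, so that $P=i_U$ is odd; together with the fact that $G_g$ is the odd graded metric, this confirms the parity requirement $|G|+|P|=0$ of Definition 2.8, so that $\nabla$ is well defined and formula (2.5) applies verbatim.

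For each pair I would evaluate the three summands of (2.5) separately. The Levi-Civita term $\nabla^L_X Y$ is read directly from (2.8). The final term $\langle X,Y;G_g\rangle\,P$ is ordinary left multiplication of $i_U$ by a form, and by the defining relations of $G_g$ it equals $d(g(\overline{X},\overline{Y}))\,i_U$ for $(X,Y)=(L_{\overline{X}},L_{\overline{Y}})$, equals $g(\overline{X},\overline{Y})\,i_U$ when exactly one argument is of $i$-type, and vanishes when both are. The middle term $X\cdot\langle Y,P;G_g\rangle$ is the only place where the right-module convention $X\alpha=(-1)^{|\alpha||X|}\alpha X$ intervenes; here $\alpha=\langle Y,i_U;G_g\rangle$ equals the function $g(\overline{Y},U)$ when $Y=L_{\overline{Y}}$ and is $0$ when $Y=i_{\overline{Y}}$. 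Since this $\alpha$ is always a degree-$0$ function, the sign $(-1)^{|\alpha||X|}$ collapses to $1$ and the middle term reduces to $g(\overline{Y},U)\,X$, i.e.\ to $g(\overline{Y},U)L_{\overline{X}}$ or $g(\overline{Y},U)i_{\overline{X}}$ according to the type of $X$, and is absent whenever $Y$ is of $i$-type.

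Collecting the three summands in each of the four cases then reproduces exactly the asserted equalities. The only real subtlety, and the point I expect to require the most care, is the graded-sign bookkeeping, in particular keeping the right-module multiplication with its Koszul sign in the middle term distinct from the sign-free left multiplication in the final term. What makes the computation essentially sign-free in the end is that every pairing entering the middle slot, $\langle Y,i_U;G_g\rangle$, is either a degree-$0$ function or zero, so no nonzero parity ever multiplies a derivation there; the only genuine $1$-form appearing, $d(g(\overline{X},\overline{Y}))$, occurs solely in the final term of the first identity, where it multiplies $i_U$ on the left and therefore carries no sign.
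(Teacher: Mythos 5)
Your proposal is correct and is essentially the paper's own argument: the paper derives Proposition 2.10 precisely by substituting $P=i_U$ and $G=G_g$ into the defining formula (2.5) and reading off the Levi-Civita terms from (2.8). Your additional care with the right-module Koszul sign in the middle term (which indeed trivializes because $\langle Y,i_U;G_g\rangle$ is always a degree-$0$ function or zero) is sound and matches the stated result in all four cases.
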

\begin{prop}Let $P=\omega L_U$ for $U\in \Gamma(M,TM)$ and $\omega\in \Omega^{{\rm odd}}(M)$ and $G=G_g$ for a Riemannian metric $g$ on $M$. In this case, we write $\nabla'$ instead of $\nabla$ and the following equalities hold
\begin{align}
&\nabla'_{L_{\overline{X}}}L_{\overline{Y}}=L_{\nabla^g_{\overline{X}}{\overline{Y}}}+\omega d(g(\overline{Y},U))L_{\overline{X}}-d(g(\overline{X},
\overline{Y}))\omega L_U,\\
&~~\nabla'_{L_{\overline{X}}}i_{\overline{Y}}=i_{\nabla^g_{\overline{X}}{\overline{Y}}}-\omega g(\overline{Y},U) L_{\overline{X}}-g(\overline{X},\overline{Y})\omega L_U,\notag\\
&\nabla'_{i_{\overline{X}}}L_{\overline{Y}}=i_{\nabla^g_{\overline{X}}{\overline{Y}}}
+\omega d(g(\overline{Y},U))i_{\overline{X}}-g(\overline{X},
\overline{Y})\omega L_U,\notag\\
&\nabla'_{i_{\overline{X}}}i_{\overline{Y}}=\omega g(\overline{Y},U)i_{\overline{X}} .\notag
\end{align}
\end{prop}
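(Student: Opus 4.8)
The plan is to substitute the specific choice $P = \omega L_U$ into the defining formula (2.8) for the semi-symmetric metric connection and evaluate each of the four derivation pairs using the Levi-Civita formulas (2.11) together with the explicit values of the graded metric $G_g$ on the frame. Recall that (2.8) reads $\nabla'_X Y = \nabla^L_X Y + X\cdot\langle Y,P;G\rangle - \langle X,Y;G\rangle P$, so for each pair the computation splits into three contributions: the Levi-Civita term $\nabla^L_X Y$, the ``gradient'' term $X\cdot\langle Y,P;G\rangle$, and the ``metric'' term $-\langle X,Y;G\rangle P$. The first is handled verbatim by (2.11). The remaining two require knowing $\langle Y,P;G_g\rangle$ and $\langle X,Y;G_g\rangle$ explicitly.

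First I would compute the pairings of each frame element with $P=\omega L_U$. Using property (2) of the graded metric to pull $\omega$ out and the tabulated values from the definition of $G_g$, I get $\langle L_{\overline Y}, \omega L_U; G_g\rangle = \omega\, d(g(\overline Y,U))$, $\langle i_{\overline Y}, \omega L_U; G_g\rangle = \omega\, g(\overline Y,U)$, since $\langle L_{\overline Y},L_U;G_g\rangle = d(g(\overline Y,U))$ and $\langle i_{\overline Y},L_U;G_g\rangle = g(\overline Y,U)$. Next I would assemble the four cases. For $\nabla'_{L_{\overline X}}L_{\overline Y}$ the gradient term is $L_{\overline X}\cdot\langle L_{\overline Y},P;G_g\rangle = \omega\, d(g(\overline Y,U))\, L_{\overline X}$ after the derivation acts as scalar multiplication on the frame element, and the metric term is $-\,d(g(\overline X,\overline Y))\,\omega L_U$, matching the first displayed line. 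For $\nabla'_{L_{\overline X}}i_{\overline Y}$ the pairing $\langle i_{\overline Y},P;G_g\rangle = \omega\, g(\overline Y,U)$ feeds the gradient term and $\langle L_{\overline X},i_{\overline Y};G_g\rangle = g(\overline X,\overline Y)$ feeds the metric term; the analogous bookkeeping handles the $i_{\overline X}$ rows, where the $i_{\overline X}$ Levi-Civita action on $L_{\overline Y}$ gives $i_{\nabla^g_{\overline X}\overline Y}$ and the pairing $\langle i_{\overline X},i_{\overline Y};G_g\rangle = 0$ kills the metric term in the last line, leaving only the gradient contribution $\omega\, g(\overline Y,U)\, i_{\overline X}$.

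The main obstacle will be the correct tracking of the $\mathbb{Z}_2$-degrees and the resulting sign factors $(-1)^{|X||\alpha|}$ coming from the Leibniz rule and from property (2) of $G_g$. Because $\omega\in\Omega^{\mathrm{odd}}(M)$ and $P=\omega L_U$ is an odd derivation, the condition $|G|+|P|=0$ forces the graded metric to be odd, so one must verify that the parities are internally consistent when the derivation $X$ differentiates the $\Omega(M)$-valued pairing $\langle Y,P;G_g\rangle$ and when $\omega$ is commuted past frame elements. In particular, the sign in the term $X\cdot\langle Y,P;G\rangle$ depends on whether the action is interpreted as a plain derivation on $\Omega(M)$ or carries an extra Koszul sign; I would check each of the four lines against this convention to confirm that no spurious sign appears, since the stated equalities carry uniformly positive coefficients on the gradient terms. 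Once the sign conventions are fixed consistently with (2.8) and the definition of $G_g$, the four identities follow by direct substitution with no further difficulty.
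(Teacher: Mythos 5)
Your overall strategy is exactly the paper's: the paper offers no separate argument for this proposition beyond ``by (2.5) and (2.8)'', i.e.\ substitute $P=\omega L_U$ into $\nabla'_XY=\nabla^L_XY+X\cdot\left<Y,P;G\right>-\left<X,Y;G\right>P$, read off the $\nabla^L$-terms from the frame formulas, and evaluate the pairings of $G_g$. Two points in your write-up, however, are not just conventions to be ``checked later'' but concrete sign errors that would derail the verification. First, the pairing you record as $\left<i_{\overline Y},\omega L_U;G_g\right>=\omega\,g(\overline Y,U)$ is wrong: axiom (2) of a graded metric gives $\left<X,\alpha Y;G\right>=(-1)^{|X||\alpha|}\alpha\left<X,Y;G\right>$, and since $|i_{\overline Y}|=|\omega|=1$ this produces $\left<i_{\overline Y},\omega L_U;G_g\right>=-\omega\,g(\overline Y,U)$. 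Second, $X\cdot\alpha$ in the defining formula is not ``the derivation acting'' in any sense; it is the right-module multiplication $X\alpha=(-1)^{|\alpha||X|}\alpha X$ fixed at the start of Section 2, and the Koszul sign there is essential. With the correct pairing, the second line comes out as $-\omega g(\overline Y,U)L_{\overline X}$ (since $|L_{\overline X}|=0$ contributes no further sign), and the fourth line as $i_{\overline X}\cdot(-\omega g(\overline Y,U))=(-1)^{1\cdot 1}(-\omega g(\overline Y,U))\,i_{\overline X}=+\omega g(\overline Y,U)\,i_{\overline X}$, both matching the statement. With your value of the pairing you would instead get $+\omega g(\overline Y,U)L_{\overline X}$ in the second line, contradicting the proposition.

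Relatedly, your proposed sanity check --- that ``the stated equalities carry uniformly positive coefficients on the gradient terms'' --- is false (the second identity has the gradient term with a minus sign), so it cannot serve as the criterion by which you resolve the sign ambiguity. Once you replace it with the two rules above (the Koszul sign in axiom (2) and the sign in $X\alpha=(-1)^{|\alpha||X|}\alpha X$), the four identities do follow by the direct substitution you describe, and the argument coincides with the paper's.
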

By p. 165 in \cite{MS}, the graded curvature of $G_g$ is given by
\begin{align}
&R^L(L_{\overline{X}},L_{\overline{Y}})L_{\overline{Z}}=L_{R^g(\overline{X},\overline{Y})\overline{Z}},\\
&R^L(L_{\overline{X}},L_{\overline{Y}})i_{\overline{Z}}=i_{R^g(\overline{X},\overline{Y})\overline{Z}},~~
R^L(L_{\overline{X}},i_{\overline{Y}})L_{\overline{Z}}=i_{R^g(\overline{X},\overline{Y})\overline{Z}},\notag\\
&R^L(L_{\overline{X}},i_{\overline{Y}})i_{\overline{Z}}=0,~~
R^L(i_{\overline{X}},i_{\overline{Y}})=0.\notag
\end{align}
By (2.7) and (2.11), we have
\begin{prop}Let $P=i_U$ for $U\in \Gamma(M,TM)$ and $G=G_g$ for a Riemannian metric $g$ on $M$, the following equalities hold
\begin{align}
&R(L_{\overline{X}},L_{\overline{Y}})L_{\overline{Z}}=L_{R^g(\overline{X},\overline{Y})\overline{Z}}
+g(\overline{Z},\nabla^g_{\overline{X}}U)L_{\overline{Y}}-g(\overline{Z},\nabla^g_{\overline{Y}}U)L_{\overline{X}}\\
&-d(g(\overline{Y},\overline{Z}))i_{\nabla^g_{\overline{X}}U}+d(g(\overline{X},\overline{Z}))i_{\nabla^g_{\overline{Y}}U}+g(\overline{Z},U)g(\overline{Y},U)L_{\overline{X}}\notag\\
&-g(\overline{Z},U)g(\overline{X},U)L_{\overline{Y}}+d(g(\overline{Y},\overline{Z}))g(\overline{X},U)i_U-
d(g(\overline{X},\overline{Z}))g(\overline{Y},U)i_U,\notag\\
&R(L_{\overline{X}},L_{\overline{Y}})i_{\overline{Z}}=i_{R^g(\overline{X},\overline{Y})\overline{Z}}-g(\overline{Y},\overline{Z})i_{
\nabla^g_{\overline{X}}U}
+g(\overline{X},\overline{Z})i_{
\nabla^g_{\overline{Y}}U}\notag\\
&+g(\overline{Y},\overline{Z})g(\overline{X},U)i_U-g(\overline{X},\overline{Z})g(\overline{Y},U)i_U,\notag\\
&R(L_{\overline{X}},i_{\overline{Y}})L_{\overline{Z}}=i_{R^g(\overline{X},\overline{Y})\overline{Z}}+
g(\overline{Z},\nabla^g_{\overline{X}}U)i_{\overline{Y}}-g(\overline{Y},\overline{Z})i_{\nabla^g_{\overline{X}}U}\notag\\
&-g(\overline{Z},U)g(\overline{X},U)i_{\overline{Y}}+g(\overline{Y},\overline{Z})g(\overline{X},U)i_U
,\notag\\
&R(L_{\overline{X}},i_{\overline{Y}})i_{\overline{Z}}=0,~~
R(i_{\overline{X}},i_{\overline{Y}})=0.\notag
\end{align}
\end{prop}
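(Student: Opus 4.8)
The plan is to derive all five identities by substituting the appropriate homogeneous frame derivations into the master curvature formula (2.7) of Proposition 2.10 and then rewriting each resulting term by means of the Levi-Civita data (2.8), the graded curvature values (2.11), and the defining relations of $G_g$. The prerequisite to be checked first is that (2.7) genuinely applies: since $P=i_U$ has degree $|P|=1$ and $G_g$ is an odd graded metric (so $|G_g(A,B)|=|A|+|B|+1 \pmod 2$, whence $|G|=1$), one has $|G|+|P|=0$, which is exactly the condition in Definition 2.8.

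Next I would isolate the two facts that collapse almost all of the correction terms in (2.7). First, the relation $\left<i_{\overline{X}},i_{\overline{Y}};G_g\right>=0$ gives $G(P,P)=\left<i_U,i_U;G_g\right>=0$, so every term of (2.7) carrying a factor $G(P,P)$ disappears. Second, the last identity in (2.8), namely $\nabla^L_{i_{\overline{X}}}i_{\overline{Y}}=0$, gives $\nabla^L_{i_{\overline{X}}}P=0$; thus whenever the second curvature slot is an interior product, every term containing $\nabla^L_Y P$ drops out, and the only surviving covariant derivative of $P$ is $\nabla^L_{L_{\overline{X}}}i_U=i_{\nabla^g_{\overline{X}}U}$, again read off from (2.8).

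I would then run through the five cases. For $R(L_{\overline{X}},L_{\overline{Y}})L_{\overline{Z}}$ all arguments are even, so every sign factor involving only $|X|,|Y|,|Z|$ equals $+1$; inserting $R^L(L_{\overline{X}},L_{\overline{Y}})L_{\overline{Z}}=L_{R^g(\overline{X},\overline{Y})\overline{Z}}$ and the pairings $G(L_{\overline{Z}},i_{\nabla^g_{\overline{X}}U})=g(\overline{Z},\nabla^g_{\overline{X}}U)$, $G(L_{\overline{Y}},L_{\overline{Z}})=d(g(\overline{Y},\overline{Z}))$, $G(L_{\overline{Z}},i_U)=g(\overline{Z},U)$ reproduces the first identity. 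The cases $R(L_{\overline{X}},L_{\overline{Y}})i_{\overline{Z}}$ and $R(L_{\overline{X}},i_{\overline{Y}})L_{\overline{Z}}$ go the same way, using $R^L(L_{\overline{X}},L_{\overline{Y}})i_{\overline{Z}}=i_{R^g(\overline{X},\overline{Y})\overline{Z}}$ and $R^L(L_{\overline{X}},i_{\overline{Y}})L_{\overline{Z}}=i_{R^g(\overline{X},\overline{Y})\overline{Z}}$ and noting that the mixed pairings $G(i_{\overline{Z}},i_{\nabla^g_{\overline{X}}U})=0$ and $G(i_{\overline{Y}},i_U)=0$ annihilate several more terms. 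Finally $R(L_{\overline{X}},i_{\overline{Y}})i_{\overline{Z}}=0$ and $R(i_{\overline{X}},i_{\overline{Y}})=0$ are immediate: $R^L$ vanishes on such inputs by (2.11), and each correction term is killed by one of $G(P,P)=0$, $\nabla^L_{i}P=0$, $G(i_{\overline{Y}},i_U)=0$, or $G(i_{\overline{X}},i_{\overline{Y}})=0$.

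The one genuinely delicate step, which I expect to carry most of the bookkeeping, is the $\mathbb{Z}_2$-graded sign tracking in the two mixed cases, in particular factors such as $(-1)^{|G(X,Z)||Y|}$: because $G_g$ is odd, $|G(L_{\overline{X}},L_{\overline{Z}})|=1$, so this exponent is nonzero when $Y=i_{\overline{Y}}$. I would verify in each mixed case that every such nontrivial sign either multiplies a term that already vanishes (e.g.\ the $\nabla^L_{i_{\overline{Y}}}P$ and $G(i_{\overline{Y}},i_U)$ terms) or is accompanied by an even degree factor that forces it to $+1$, so that no residual sign survives into the stated formulas.
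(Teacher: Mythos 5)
Your proposal is correct and follows essentially the same route as the paper, whose proof of this proposition is simply the substitution of the frame derivations into the curvature formula (2.7) combined with (2.8), (2.11), and the defining pairings of $G_g$. The simplifying observations you isolate ($G(P,P)=\left<i_U,i_U;G_g\right>=0$ and $\nabla^L_{i_{\overline{X}}}P=0$) are exactly what makes the paper's computation collapse to the stated identities.
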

By (2.4) and (2.12), we can get
\begin{thm}Let $P=i_U$ for $U\in \Gamma(M,TM)$ and $G=G_g$ for a Riemannian metric $g$ on $M$, then $(M,\Omega(M),\nabla)$ is Ricci flat. When $U=0$, we get Theorem 4.3 in \cite{MS}.
\end{thm}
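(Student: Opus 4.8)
The plan is to compute the Ricci tensor directly from its definition (2.4) using the curvature formulas (2.12) established in the preceding Proposition, and verify that every component vanishes. Since the Ricci tensor is completely determined by its action on the three types of pairs of derivations $(L_{\overline{X}},L_{\overline{Y}})$, $(L_{\overline{X}},i_{\overline{Y}})$, $(i_{\overline{X}},i_{\overline{Y}})$, it suffices to evaluate $Ric_\nabla$ on each of these and show the result is zero. I would fix the orthonormal frame $\{\overline{X_k}\}_{k=1}^m$ satisfying the pairing relations (2.3), and recall from (2.3) that the only nonvanishing metric pairings are $\left<L_{\overline{X_k}},i_{\overline{X_l}};G_g\right>=\delta_{kl}$ (together with its graded symmetry), so that extracting components of a curvature term $R_\nabla(\cdot,\cdot)\cdot$ against $i_{\overline{X_k}}$ or $L_{\overline{X_l}}$ amounts to reading off the $L$-part or $i$-part of the curvature.

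First I would compute $Ric_\nabla(L_{\overline{X}},L_{\overline{Y}})$. By (2.4) this is the sum over $k$ of $\left<R_\nabla(L_{\overline{X_k}},L_{\overline{X}})L_{\overline{Y}},i_{\overline{X_k}};G_g\right>$ minus the sign-corrected sum over $l$ of $\left<R_\nabla(i_{\overline{X_l}},L_{\overline{X}})L_{\overline{Y}},L_{\overline{X_l}};G_g\right>$. Into the first sum I substitute the formula for $R(L_{\overline{X_k}},L_{\overline{X}})L_{\overline{Y}}$ from (2.12), keeping only the $i_U$- and $i_{\nabla^g U}$-type terms since pairing against $i_{\overline{X_k}}$ via (2.3) selects the $L$-coefficients; here the Riemannian Ricci contribution $\sum_k g(R^g(\overline{X_k},\overline{X})\overline{Y},\overline{X_k})$ enters, alongside the semi-symmetric correction terms. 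Into the second sum I substitute $R(i_{\overline{X_l}},L_{\overline{X}})L_{\overline{Y}}$, again read off the appropriate component. The key mechanism is that the contraction of the classical curvature against the frame reproduces $-\mathrm{Ric}^g(\overline{X},\overline{Y})$ (or $+$, depending on conventions) in the $L$-channel, and the same classical curvature reappears with opposite sign in the $i$-channel, so the two summations in (2.4) cancel the Riemannian Ricci contributions against each other; the remaining algebraic correction terms built from $g(\overline{Y},U)$, $g(\overline{X},U)$, $d(g(\cdot,\cdot))$ and $\nabla^g_{\overline{X}}U$ must be shown to cancel in pairs by the symmetry of the two sums. I would then repeat this for the mixed pair $(L_{\overline{X}},i_{\overline{Y}})$ and for $(i_{\overline{X}},i_{\overline{Y}})$, where (2.12) shows $R(L,i)i=0$ and $R(i,i)=0$, so those components are immediate or nearly so.

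The heart of the argument—and the step I expect to be the main obstacle—is the bookkeeping of the $\mathbb{Z}_2$-signs $(-1)^{|X|+|Y|}$ in (2.4) together with the degree of each derivation ($|L_{\overline{X}}|=0$, $|i_{\overline{X}}|=1$) and the degrees of the scalar coefficients such as $d(g(\overline{X},\overline{Y}))$, which is odd. One must check that the sign attached to the second summation in (2.4) is exactly what is needed to cancel the first; a sign error here would spuriously double the terms rather than annihilate them. I would therefore carefully track the parity of each factor in every term of (2.12) when it is paired under $G_g$, using that pairing with $i_{\overline{X_k}}$ shifts degree by $1$. Once the cancellation of the Riemannian Ricci contributions is confirmed in the leading ($U$-independent) terms, the specialization $U=0$ kills all correction terms in (2.12), and the statement reduces precisely to the Ricci-flatness of $G_g$, which is Theorem 4.3 in \cite{MS}; this recovers the final sentence of the claim and serves as a consistency check on the sign conventions used throughout.
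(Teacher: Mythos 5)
Your proposal is correct and is exactly the route the paper takes (the paper offers no more than ``by (2.4) and (2.12)'', i.e.\ contract the curvature components of Proposition 2.12 against the frame $\{L_{\overline{X_k}},i_{\overline{X_k}}\}$ using the pairings (2.3) and check that the $L$-channel and $i$-channel sums in (2.4) cancel term by term, including the $(1-m)\bigl(g(\overline{Y},\nabla^g_{\overline{X}}U)-g(\overline{X},U)g(\overline{Y},U)\bigr)$ corrections). One wording slip: since pairing against $i_{\overline{X_k}}$ selects the $L$-coefficients, you should keep the $L_{(\cdot)}$-type terms of (2.12), not the $i_U$- and $i_{\nabla^g U}$-type ones as written.
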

By (2.7) and (2.11), we have
\begin{prop}Let $P=\omega L_U$ for $U\in \Gamma(M,TM)$ and $\omega\in \Omega^{{\rm odd}}(M)$ and $G=G_g$ for a Riemannian metric $g$ on $M$. The following equalities hold
\begin{align}
 &R'(L_{\overline{X}},L_{\overline{Y}})L_{\overline{Z}}=L_{R^g(\overline{X},\overline{Y})\overline{Z}}
+L_{\overline{X}}(\omega)d(g(\overline{Z},U))L_{\overline{Y}}+\omega d(g(\overline{Z},\nabla^g_{\overline{X}}U))L_{\overline{Y}}\\
&-L_{\overline{Y}}(\omega)d(g(\overline{Z},U))L_{\overline{X}}-\omega d(g(\overline{Z},\nabla^g_{\overline{Y}}U))L_{\overline{X}}
-d(g(\overline{Y},\overline{Z}))L_{\overline{X}}(\omega)L_U\notag\\
&-d(g(\overline{Y},\overline{Z}))\omega L_{\nabla^g_{\overline{X}}U}
+d(g(\overline{X},\overline{Z}))L_{\overline{Y}}(\omega)L_U+d(g(\overline{X},\overline{Z}))\omega L_{\nabla^g_{\overline{Y}}U},\notag\\
 &R'(L_{\overline{X}},L_{\overline{Y}})i_{\overline{Z}}=i_{R^g(\overline{X},\overline{Y})\overline{Z}}
-L_{\overline{X}}(\omega)g(\overline{Z},U)L_{\overline{Y}}-\omega g(\overline{Z},\nabla^g_{\overline{X}}U)L_{\overline{Y}}\notag\\
&+L_{\overline{Y}}(\omega)g(\overline{Z},U)L_{\overline{X}}+\omega g(\overline{Z},\nabla^g_{\overline{Y}}U)L_{\overline{X}}
-g(\overline{Y},\overline{Z})L_{\overline{X}}(\omega)L_U\notag\\
&-g(\overline{Y},\overline{Z})\omega L_{\nabla^g_{\overline{X}}U}
+g(\overline{X},\overline{Z})L_{\overline{Y}}(\omega)L_U+g(\overline{X},\overline{Z})\omega L_{\nabla^g_{\overline{Y}}U},\notag\\
 &R'(L_{\overline{X}},i_{\overline{Y}})L_{\overline{Z}}=i_{R^g(\overline{X},\overline{Y})\overline{Z}}+
L_{\overline{X}}(\omega)d(g(\overline{Z},U))i_{\overline{Y}}+\omega d(g(\overline{Z},\nabla^g_{\overline{X}}U))i_{\overline{Y}}\notag\\
&-i_{\overline{Y}}(\omega)d(g(\overline{Z},U))L_{\overline{X}}+\omega g(\overline{Z},\nabla^g_{\overline{Y}}U)L_{\overline{X}}
-g(\overline{Y},\overline{Z})L_{\overline{X}}(\omega)L_U\notag\\
&-g(\overline{Y},\overline{Z})\omega L_{\nabla^g_{\overline{X}}U}
-d(g(\overline{X},\overline{Z}))i_{\overline{Y}}(\omega)L_U+d(g(\overline{X},\overline{Z}))\omega i_{\nabla^g_{\overline{Y}}U}
,\notag\\
 &R'(L_{\overline{X}},i_{\overline{Y}})i_{\overline{Z}}=L_{\overline{X}}(\omega)g(\overline{Z},U)i_{\overline{Y}}
+\omega g(\overline{Z},\nabla^g_{\overline{X}}U)i_{\overline{Y}}+i_{\overline{Y}}(\omega)g(\overline{Z},U)L_{\overline{X}}\notag\\
&+g(\overline{X},\overline{Z})i_{\overline{Y}}(\omega)L_U-g(\overline{X},\overline{Z})\omega i_{\nabla^g_{\overline{Y}}U},\notag\\
 &R'(i_{\overline{X}},i_{\overline{Y}})L_{\overline{Z}}=i_{\overline{X}}(\omega)d(g(\overline{Z},U))i_{\overline{Y}}
-\omega g(\overline{Z},\nabla^g_{\overline{X}}U)i_{\overline{Y}}
+i_{\overline{Y}}(\omega)d(g(\overline{Z},U))i_{\overline{X}}\notag\\
&
-\omega g(\overline{Z},\nabla^g_{\overline{Y}}U)i_{\overline{X}}
-g(\overline{Y},\overline{Z})i_{\overline{X}}(\omega)L_U+g(\overline{Y},\overline{Z})\omega i_{\nabla^g_{\overline{X}}U}\notag\\
&-g(\overline{X},\overline{Z})i_{\overline{Y}}(\omega)L_U+g(\overline{X},\overline{Z})\omega i_{\nabla^g_{\overline{Y}}U},\notag\\
 &R'(i_{\overline{X}},i_{\overline{Y}})i_{\overline{Z}}=i_{\overline{X}}(\omega)g(\overline{Z},U)i_{\overline{Y}}
+i_{\overline{Y}}(\omega)g(\overline{Z},U)i_{\overline{X}}.\notag
\end{align}
\end{prop}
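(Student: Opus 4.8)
The plan is to specialize the general curvature identity (2.7) to $P=\omega L_U$ with $G=G_g$, and then to evaluate it on each of the six homogeneous triples built from the frame $\{L_{\overline{X}},i_{\overline{X}}\}$. The decisive first observation is that every term of (2.7) that is \emph{quadratic} in $P$ vanishes here. Indeed $G(P,P)=\omega^2\,d(g(U,U))$, and each factor $G(\cdot,P)$ carries exactly one $\omega$, so the four products $G(Z,P)G(Y,P)$, $G(Z,P)G(X,P)$, $G(Y,Z)G(X,P)P$ and $G(X,Z)G(Y,P)P$ each contain $\omega\wedge\omega$. Since $\omega$ is odd, its homogeneous components have odd degree, hence square to zero and pairwise anticommute, so $\omega\wedge\omega=0$. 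Thus all six quadratic terms drop out — in contrast with the preceding case $P=i_U$ of (2.12), where the analogous products $g(\cdot,U)g(\cdot,U)$ survive — and (2.7) collapses to
\[
R'(X,Y)Z=R^L(X,Y)Z+(-1)^{(|X|+|Y|)|Z|}G(Z,\nabla^L_XP)\,Y+\cdots,
\]
i.e. $R^L$ together with only the four terms that are linear in $P$.

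Next I would assemble the three ingredients these linear terms need. First, $\nabla^L_XP=\nabla^L_X(\omega L_U)$ follows from the Leibniz rule of Definition 2.2 together with the Levi-Civita action (2.8): for $X=L_{\overline{X}}$ one gets $L_{\overline{X}}(\omega)L_U+\omega L_{\nabla^g_{\overline{X}}U}$, while for $X=i_{\overline{X}}$ the sign $(-1)^{|X|}$ in the Leibniz rule yields $i_{\overline{X}}(\omega)L_U-\omega\,i_{\nabla^g_{\overline{X}}U}$. Second, the scalar coefficients $G(Z,\nabla^L_XP)$ and the factors $G(Y,Z)$, $G(X,Z)$ are obtained from the values $\langle L_{\overline{X}},L_{\overline{Y}};G_g\rangle=d(g(\overline{X},\overline{Y}))$, $\langle L_{\overline{X}},i_{\overline{Y}};G_g\rangle=g(\overline{X},\overline{Y})$ and $\langle i_{\overline{X}},i_{\overline{Y}};G_g\rangle=0$, after moving $\omega$, $L_{\overline{X}}(\omega)$ or $i_{\overline{X}}(\omega)$ out of the bracket by property (2) of Definition 2.1. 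Third, $R^L(X,Y)Z$ is read off from (2.11). Substituting these into the reduced formula and running through the six cases reproduces the stated list; the degenerate choice $\omega=0$ (or $U=0$) returns $R'=R^L$, i.e. (2.11), which is a convenient check.

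The principal obstacle is purely the sign-and-parity bookkeeping, which is subtle precisely because $G_g$ is an \emph{odd} metric. The exponents in (2.7) involve the parities $|G(Y,Z)|$ and $|G(X,Z)|$, and these are not constant: the parity of $\langle L_{\overline{X}},L_{\overline{Y}};G_g\rangle=d(g(\overline{X},\overline{Y}))$ is $1$ (a one-form), whereas that of $\langle L_{\overline{X}},i_{\overline{Y}};G_g\rangle=g(\overline{X},\overline{Y})$ is $0$. One must likewise keep track of $|i_{\overline{X}}|=1$, $|\omega|=1$, and the fact that $i_{\overline{X}}(\omega)$ is \emph{even} while $\omega$ is odd, since these parities decide whether the Koszul signs $(-1)^{|X|}$, $(-1)^{|X||Y|}$ and $(-1)^{(|X|+|Y|)|Z|}$ reduce to $+1$ or $-1$ in each case. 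Once the quadratic terms have been discarded the remaining computation is mechanical; the only genuine care needed is assigning these case-dependent parities correctly so that the four surviving terms combine with exactly the signs displayed in the statement.
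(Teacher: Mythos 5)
Your proposal is correct and follows essentially the same route as the paper, which simply derives the proposition "by (2.7) and (2.11)", i.e.\ by substituting $P=\omega L_U$ into the general curvature formula and using (2.8), (2.11) and the defining values of $G_g$. Your explicit observation that all terms quadratic in $P$ vanish because $\omega\wedge\omega=0$ for odd $\omega$ (so $G(P,P)=0$ and each product of two $G(\cdot,P)$ factors contains $\omega\wedge\omega$) is exactly the reason the stated formulas contain only the $R^L$ part and the four $P$-linear terms, a point the paper leaves implicit.
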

By (2.4) and (2.13), we have
\begin{thm}Let $P=\omega L_U$ for $U\in \Gamma(M,TM)$ and $\omega\in \Omega^{{\rm odd}}(M)$ and $G=G_g$ for a Riemannian metric $g$ on $M$, then $(M,\Omega(M),\nabla')$ is Einstein with the coefficient $-L_U\omega$, i.e. ${\rm Ric}'=-G_g\cdot L_U\omega.$
\end{thm}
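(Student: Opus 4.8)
The plan is to compute the Ricci tensor ${\rm Ric}'$ directly from its definition (2.4) by substituting the curvature formulas (2.13), and to verify the claimed identity ${\rm Ric}'=-G_g\cdot L_U\omega$ on each of the three kinds of pairs of basis derivations, $(L_{\overline{X}},L_{\overline{Y}})$, $(L_{\overline{X}},i_{\overline{Y}})$ and $(i_{\overline{X}},i_{\overline{Y}})$. Since a graded metric tensor is completely determined by its values on these pairs, this suffices. Recalling from (2.3) that the only nonzero pairings of frame derivations are $\langle L_{\overline{X_k}},i_{\overline{X_l}};G_g\rangle=\delta_{kl}$, the first sum in (2.4) extracts the $L_{\overline{X_k}}$-component of $R'(L_{\overline{X_k}},X)Y$ and the second extracts the $i_{\overline{X_l}}$-component of $R'(i_{\overline{X_l}},X)Y$; together with $\langle L_U,i_{\overline{X_k}};G_g\rangle=g(U,\overline{X_k})$ and $\langle i_{\overline{Z}},L_{\overline{X_l}};G_g\rangle=g(\overline{Z},\overline{X_l})$ this reduces everything to scalar contractions.

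First I would treat $(i_{\overline{X}},i_{\overline{Y}})$. Using the last two lines of (2.13) for $R'(L_{\overline{X_k}},i_{\overline{X}})i_{\overline{Y}}$ and $R'(i_{\overline{X_l}},i_{\overline{X}})i_{\overline{Y}}$ and contracting, each of the two sums produces a term proportional to $(m+1)\,i_{\overline{X}}(\omega)\,g(\overline{Y},U)$, and these cancel, giving ${\rm Ric}'(i_{\overline{X}},i_{\overline{Y}})=0=-\langle i_{\overline{X}},i_{\overline{Y}};G_g\rangle\,L_U\omega$. This case already exhibits the mechanism that governs the whole proof: the large terms carrying a factor $m$ from the diagonal contraction $\langle L_{\overline{X_k}},i_{\overline{X_k}};G_g\rangle=1$ cancel between the two sums of (2.4).

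For the remaining pairs I would first use the graded antisymmetry $R'(Y,X)Z=-(-1)^{|X||Y|}R'(X,Y)Z$, which follows from (2.2) and the graded antisymmetry of the commutator, to rewrite $R'(i_{\overline{X_l}},L_{\overline{X}})(\cdot)$ in terms of the curvatures $R'(L_{\overline{X}},i_{\overline{X_l}})(\cdot)$ that actually appear in (2.13). Substituting and contracting as above, the term that survives is exactly the one built from $\sum_k g(U,\overline{X_k})\,L_{\overline{X_k}}(\omega)$. Here I would invoke the non-tensoriality of the Lie derivative in its argument, $L_{f\overline{V}}\omega=f\,L_{\overline{V}}\omega+df\wedge i_{\overline{V}}\omega$, together with $i_{f\overline{V}}\omega=f\,i_{\overline{V}}\omega$ and the trace identity $\sum_k g(A,\overline{X_k})\,g(B,\overline{X_k})=g(A,B)$, to write $\sum_k g(U,\overline{X_k})\,L_{\overline{X_k}}\omega=L_U\omega-\sum_k d(g(U,\overline{X_k}))\wedge i_{\overline{X_k}}\omega$. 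The coefficient of this leading piece is $-g(\overline{X},\overline{Y})$ in the $(L_{\overline{X}},i_{\overline{Y}})$ case and $-d(g(\overline{X},\overline{Y}))$ in the $(L_{\overline{X}},L_{\overline{Y}})$ case, matching $-\langle L_{\overline{X}},i_{\overline{Y}};G_g\rangle$ and $-\langle L_{\overline{X}},L_{\overline{Y}};G_g\rangle$ respectively.

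The main obstacle I expect is the bookkeeping needed to show that everything else cancels: the correction term $\sum_k d(g(U,\overline{X_k}))\wedge i_{\overline{X_k}}\omega$ just produced, the terms containing the Levi-Civita derivative $\nabla^g U$ (in particular the divergence-type contraction $\sum_k g(\nabla^g_{\overline{X_k}}U,\overline{X_k})$), and the $L_{\overline{X}}(\omega)$ pieces sitting on the diagonal index. These are precisely the terms that get distributed between the $L$--$i$ sum and the $i$--$L$ sum of (2.4), and the verification amounts to checking that, after using the antisymmetry and the two identities above, each such term in the first sum is matched by its negative in the second. Once this cancellation is carried out for all three pairs, the collected result reads ${\rm Ric}'(X,Y)=-\langle X,Y;G_g\rangle\,L_U\omega$ for every basis pair, i.e. ${\rm Ric}'=-G_g\cdot L_U\omega$, which is the asserted Einstein condition with coefficient $-L_U\omega$.
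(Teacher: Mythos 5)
Your proposal is correct and takes exactly the route the paper does: the paper's entire proof is the one line ``By (2.4) and (2.13), we have \dots'', i.e.\ substitute the curvature formulas of Proposition 2.14 into the definition of the Ricci tensor and contract against the frame $\{L_{\overline{X_k}},i_{\overline{X_k}}\}$. The details you supply that the paper omits --- the cancellation between the two sums of (2.4) of the terms proportional to $m$ coming from the diagonal pairing $\left<L_{\overline{X_k}},i_{\overline{X_k}};G_g\right>=1$, and the use of $L_{f\overline{V}}\omega=fL_{\overline{V}}\omega+df\wedge i_{\overline{V}}\omega$ to reassemble $\sum_k g(U,\overline{X_k})L_{\overline{X_k}}\omega$ plus its correction term into $L_U\omega$ --- are exactly what makes the computation close, and they check out.
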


\section{The distribution
  on the algebra of differential forms and a semi-symmetric metric connection}
\indent Let $D\subseteq {\rm Der}(\Omega(M))$ and $D^\bot\subseteq {\rm Der}(\Omega(M))$ be two distributions on ${\rm Der}(\Omega(M))$;
 that is, two $\Omega(M)$-submodules of ${\rm Der}(\Omega(M))$. Let ${\rm Der}(\Omega(M))=D\oplus D^\bot.$ and
 $G=G^D\oplus G^{D^\bot}$ where $G^D$ and $ G^{D^\bot}$ are metric on $D$ and $D^\bot$ respectively.\\
\indent {\bf Example}. Let $(M,\overline{X_1},\cdots,\overline{X_m})$ be a parallelizable manifold (see the definition 1.1 in \cite{YE}). Let
$D=\{L_{\overline{X_1}},\cdots,L_{\overline{X_k}},i_{\overline{X_1}},\cdots,i_{\overline{X_k}}\}$ and
$D^\bot=\{L_{\overline{X_{k+1}}},\cdots,L_{\overline{X_m}},i_{\overline{X_{K+1}}},\cdots,i_{\overline{X_m}}\}$ be two distributions
and ${\rm Der}(\Omega(M))=D\oplus D^\bot.$ Another example is the warped product (similar to the definition 5 in \cite{BG}).\\
\indent Let $\pi^D:{\rm Der}(\Omega(M))\rightarrow D$, $\pi^{D^\bot}:{\rm Der}(\Omega(M))\rightarrow D^\bot$ be the projections with the even grading.
For $X,Y \in D$, we define $\nabla^{D,L}_XY=\pi^D(\nabla^L_XY)$ and $[X,Y]^D=\pi^D([X,Y])$ and $[X,Y]^{D^\bot}=\pi^{D^\bot}([X,Y])$.
we have for $X,Y \in  D$ and $\alpha\in \Omega(M)$
\begin{equation}
\nabla^{D,L}_{\alpha X}Y=\alpha\nabla^{D,L}_{X}Y,~~\nabla^{D,L}_{X}(\alpha Y)=X(\alpha)Y+(-1)^{|X||\alpha|}\alpha\nabla^{D,L}_{X}Y,
\end{equation}
\begin{equation}
\nabla^{D,L}_XG^D=0,~~~~T^{D,L}(X,Y):=\nabla^{D,L}_{X}Y-(-1)^{|X||Y|}\nabla^{D,L}_{Y}X-[X,Y]=-[X,Y]^{D^\bot},
\end{equation}
and
\begin{equation}
\nabla^L_XY=\nabla^{D,L}_{X}Y+B(X,Y),~~B(X,Y)=\pi^{D^\bot}\nabla^L_XY,
\end{equation}
\begin{equation}
B(\alpha X,Y)=\alpha B(X,Y),~~B(X,\alpha Y)=(-1)^{|X||\alpha|}\alpha B(X,Y);
\end{equation}
\begin{equation}
~~B(X,Y)=(-1)^{|X||Y|}B(Y,X)+[X,Y]^{D^\bot}.
\end{equation}
By (3.2), we have
\begin{align}
2\left<\nabla^{D,L}_XY,Z;G^D\right>&=X\left<Y,Z;G^D\right>+\left<[X,Y]^D,Z;G^D\right>\\
&+(-1)^{|X|(|Y|+|Z|)}(Y\left<Z,X;G^D\right>-\left<[Y,Z]^D,X;G^D\right>)\notag\\
&-(-1)^{|Z|(|X|+|Y|)}(Z\left<X,Y;G^D\right>-\left<[Z,X]^D,Y;G^D\right>),\notag
\end{align}
for all homogeneous $X,Y,Z\in D$. Similarly to Theorem 2.5, we have
\begin{thm}
There exists a unique partial linear connection ${\nabla}^{L,D}: D\times D\rightarrow D$ on $D$, which satisfies the property (3.2).
\end{thm}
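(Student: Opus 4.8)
The plan is to reproduce, in the graded and \emph{partial} setting, the classical uniqueness-existence argument already invoked for Theorem 2.5, with the Koszul-type formula (3.6) playing the central role. The two halves of property (3.2) are metric compatibility $\nabla^{D,L}_XG^D=0$, i.e. $X\langle Y,Z;G^D\rangle=\langle\nabla^{D,L}_XY,Z;G^D\rangle+(-1)^{|X||Y|}\langle Y,\nabla^{D,L}_XZ;G^D\rangle$, together with the partial-torsion condition $T^{D,L}(X,Y)=-[X,Y]^{D^\bot}$, which is equivalent to $\nabla^{D,L}_XY-(-1)^{|X||Y|}\nabla^{D,L}_YX=[X,Y]^D$. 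As indicated in the excerpt, cyclically permuting $X,Y,Z$ in the metric-compatibility identity and eliminating the bracket terms by means of the partial-torsion relation yields exactly (3.6). Since the right-hand side of (3.6) involves only $G^D$ and the brackets $[\cdot,\cdot]^D$ and is independent of $\nabla^{D,L}$, this already forces uniqueness: by condition (3) of Definition 2.1 applied to the submodule $D$, the map $W\mapsto\langle\cdot,W;G^D\rangle$ is an isomorphism $D\to{\rm Hom}(D,\Omega(M))$, so knowing $\langle\nabla^{D,L}_XY,Z;G^D\rangle$ for all $Z\in D$ determines $\nabla^{D,L}_XY$ completely.

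For existence, I would turn this reasoning around and \emph{define} $\nabla^{D,L}_XY$ to be the unique element of $D$ whose pairing against every $Z\in D$ equals one-half the right-hand side of (3.6). This is legitimate once I verify that the right-hand side, regarded as a function of $Z$, is a homogeneous $\Omega(M)$-linear map $D\to\Omega(M)$, i.e. an element of ${\rm Hom}(D,\Omega(M))$; the isomorphism of Definition 2.1(3) then produces the desired $\nabla^{D,L}_XY\in D$. The $\Omega(M)$-linearity in $Z$ is checked by substituting $\alpha Z$ and expanding the terms $Z\langle X,Y;G^D\rangle$ and $\langle[Z,X]^D,Y;G^D\rangle$ via the derivation property of $Z$ and the Leibniz rule for the graded bracket, so that the inhomogeneous contributions cancel against those coming from $G^D$.

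Next I would confirm that the $\nabla^{D,L}$ so defined is an affine connection in the sense of Definition 2.2 restricted to $D$. Additivity in both arguments is immediate from additivity of the bracket and of $X\mapsto X(\cdot)$. The $\Omega(M)$-linearity in the first slot, $\nabla^{D,L}_{\alpha X}Y=\alpha\nabla^{D,L}_XY$, and the graded Leibniz rule in the second slot, $\nabla^{D,L}_X(\alpha Y)=X(\alpha)Y+(-1)^{|X||\alpha|}\alpha\nabla^{D,L}_XY$, are each obtained by inserting $\alpha X$, respectively $\alpha Y$, into (3.6), expanding $[\alpha X,Z]$ and $[X,\alpha Y]$ by the derivation rule for the bracket, and collecting the graded signs; the extra derivative terms cancel in pairs exactly as in the ungraded Koszul argument, leaving precisely the stated identities. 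Finally, property (3.2) itself is recovered from (3.6): metric compatibility follows by adding the Koszul expressions for $\langle\nabla^{D,L}_XY,Z;G^D\rangle$ and $(-1)^{|X||Y|}\langle Y,\nabla^{D,L}_XZ;G^D\rangle$ and simplifying, while the partial-torsion identity follows by antisymmetrizing (3.6) in $X,Y$ and using $\langle[X,Y]^D,Z;G^D\rangle=\langle[X,Y],Z;G^D\rangle$ for $Z\in D$.

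The main obstacle is the graded sign bookkeeping rather than any conceptual difficulty: I must verify that the right-hand side of (3.6) is $\Omega(M)$-linear in $Z$ and that the Leibniz rule carries exactly the sign $(-1)^{|X||\alpha|}$, tracking the degree shifts through every term. Because $D$ is only an $\Omega(M)$-submodule and the bracket need not preserve it, every bracket must be replaced by its projection $[\cdot,\cdot]^D$; the relation (3.5), $B(X,Y)=(-1)^{|X||Y|}B(Y,X)+[X,Y]^{D^\bot}$, is what guarantees that the $D^\bot$-components are correctly absorbed, so that the partial torsion comes out as $-[X,Y]^{D^\bot}$ precisely. Once these signs are controlled, the argument is a verbatim transcription of the Levi-Civita uniqueness-existence proof already used for Theorem 2.5.
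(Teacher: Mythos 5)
Your argument is correct and is essentially the paper's intended proof: the paper gives no details beyond ``Similarly to Theorem 2.5,'' i.e.\ the graded Koszul-formula argument, and your derivation of uniqueness from (3.6) via non-degeneracy of $G^D$ on $D$, together with existence obtained by reading (3.6) as the definition of $\langle\nabla^{D,L}_XY,Z;G^D\rangle$, is exactly that adaptation. The only cosmetic difference is that the paper already has the existence half for free, since $\nabla^{D,L}_XY=\pi^D(\nabla^L_XY)$ was verified to satisfy (3.2) before the theorem is stated, so only the uniqueness part genuinely needs the Koszul formula.
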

In the definition 2.8, we use $U$ instead of $P$.
Let $U^D=\pi^DU$ and $U^{D^\bot}=\pi^{D^\bot}U$, then $U=U^D+U^{D^\bot}$. Let
\begin{equation}
{\nabla}_XY=\widetilde{\nabla}^D_{X}Y+\widetilde{B}(X,Y),~~\widetilde{\nabla}^D_{X}Y=\pi^D {\nabla}_{X}Y,~~\widetilde{B}(X,Y)=\pi^{D^\bot}{\nabla}_XY.
\end{equation}
We call the $\widetilde{B}(X,Y)$ as the second fundamental form with respect to the semi-symmetric metric connection. We have
\begin{equation}
\widetilde{\nabla}^D_{X}Y={\nabla}^{D,L}_{X}Y+X\cdot G(Y,U)-G(X,Y)U^D,~~\widetilde{B}(X,Y)={B}(X,Y)-G(X,Y)U^{D^\bot};
\end{equation}
\begin{equation}
\widetilde{\nabla}^D_{X}(G^D)=0,~~\widetilde{T}^D(X,Y)=-[X,Y]^{D^\bot}+X\cdot G(Y,U)-(-1)^{|X||Y|}Y\cdot G(X,U),
\end{equation}
for all homogeneous $X,Y\in D$. Similar to Theorem 2.9, we have
\begin{thm}
There exists a unique partial linear connection $\widetilde{\nabla}^{D}: D\times D\rightarrow D$ on $D$, which satisfies the property (3.9).
\end{thm}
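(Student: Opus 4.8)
The plan is to prove existence and uniqueness separately, following the template of Theorem 2.9 but working on the submodule $D$ equipped with $G^D$. Existence is essentially in hand: the map $\widetilde{\nabla}^D$ introduced in (3.7) as the $D$-projection $\pi^D\nabla$ of the semi-symmetric metric connection is, by (3.8), given explicitly by $\widetilde{\nabla}^D_XY=\nabla^{D,L}_XY+X\cdot G(Y,U)-G(X,Y)U^D$. First I would check that this is a genuine partial linear connection $D\times D\to D$: $\Omega(M)$-linearity in the first slot and the graded Leibniz rule in the second slot follow from the corresponding properties (3.1) of $\nabla^{D,L}$ together with the facts that $X\mapsto X\cdot G(Y,U)$ supplies the derivation term and $G(X,Y)U^D$ is $\Omega(M)$-bilinear. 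That this connection satisfies (3.9) is the content of the two displayed identities: the compatibility $\widetilde{\nabla}^D_X(G^D)=0$ uses that $\nabla^{D,L}$ preserves $G^D$ and that the correction terms are arranged to cancel in the defect of $X\langle Y,Z;G^D\rangle$, while the torsion identity follows because the two $U^D$-terms cancel via the graded symmetry $G(Y,X)=(-1)^{|X||Y|}G(X,Y)$ and the $\nabla^{D,L}$-part contributes $-[X,Y]^{D^\bot}$ by (3.2).

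The real content is uniqueness. Suppose $\widetilde{\nabla}^D$ and a second partial connection $\widehat{\nabla}^D$ both satisfy (3.9), and set $\Theta(X,Y):=\widetilde{\nabla}^D_XY-\widehat{\nabla}^D_XY$. Because both connections obey the same Leibniz rule, the derivation terms cancel, so $\Theta$ is $\Omega(M)$-bilinear and degree preserving, i.e. a tensor $D\times D\to D$. Since the right-hand side of the torsion formula in (3.9) does not depend on the connection, the two torsions agree and $\Theta$ is graded symmetric: $\Theta(X,Y)=(-1)^{|X||Y|}\Theta(Y,X)$. Subtracting the two metric-compatibility identities gives $0=\langle\Theta(X,Y),Z;G^D\rangle+(-1)^{|X||Y|}\langle Y,\Theta(X,Z);G^D\rangle$, and applying the graded symmetry of $G^D$ to the second term turns this into $\theta(X,Y,Z)=-(-1)^{|Y||Z|}\theta(X,Z,Y)$, where $\theta(X,Y,Z):=\langle\Theta(X,Y),Z;G^D\rangle$; that is, $\theta$ is graded antisymmetric in its last two arguments.

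To finish I would run the standard cyclic argument. Writing $\theta$ successively through the symmetry in slots $1,2$ and the antisymmetry in slots $2,3$ around the cycle $X\to Y\to Z\to X$ and back, all the Koszul signs combine to an overall factor $-1$, yielding $\theta(X,Y,Z)=-\theta(X,Y,Z)$ and hence $\theta\equiv 0$. Non-degeneracy of $G^D$ (condition (3) of Definition 2.1 restricted to $D$) then forces $\Theta(X,Y)=0$ for all homogeneous $X,Y\in D$, so $\widetilde{\nabla}^D=\widehat{\nabla}^D$, which is the desired uniqueness.

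The hard part will be the graded sign bookkeeping. On the existence side one must verify that the correction terms preserve $G^D$-compatibility exactly, and on the uniqueness side one must confirm that the alternating use of ``symmetric in slots $1,2$'' and ``antisymmetric in slots $2,3$'' produces precisely the factor $-1$ rather than $+1$; this is where the $\mathbb{Z}_2$-grading, through the Koszul sign convention, could in principle differ from the classical Levi-Civita computation, so the cancellations of the $(-1)^{|X||Y|}$ factors must be tracked with care.
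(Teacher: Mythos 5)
Your proposal is correct, and it supplies the argument that the paper only gestures at (the paper's entire ``proof'' is the phrase ``Similar to Theorem 2.9,'' which in turn is quoted from \cite{Wa2} without proof). Your existence step is exactly what the paper has in hand: formula (3.8) exhibits $\widetilde{\nabla}^D$ explicitly as $\nabla^{D,L}$ plus correction terms that are $\Omega(M)$-tensorial in both slots, so the connection axioms are inherited from (3.1), and (3.9) is then a computation. Where you diverge slightly is in uniqueness: the paper's template (Theorems 2.5 and 3.1, i.e.\ the Koszul formula (3.6) adjusted by the prescribed torsion) would derive an explicit expression for $2\langle\widetilde{\nabla}^D_XY,Z;G^D\rangle$ and invoke non-degeneracy of $G^D$ once; you instead take the difference tensor $\Theta$ of two candidate connections, observe that the fixed torsion makes $\Theta$ graded symmetric in $(X,Y)$ while metric compatibility makes $\theta(X,Y,Z)=\langle\Theta(X,Y),Z;G^D\rangle$ graded antisymmetric in $(Y,Z)$, and run the six-step cycle to get $\theta=-\theta$. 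The two routes are logically equivalent (the cyclic cancellation is exactly the combinatorics hidden in the Koszul formula), and your sign bookkeeping checks out: the chain sym--antisym--sym--antisym--sym--antisym produces the net factor $-1$ because each $(-1)^{|X||Y|}$ appears an even number of times except for the single residual antisymmetry sign. The only hypotheses you are using beyond the paper's setup are that $G^D$ is non-degenerate on $D$ (built into the assumption $G=G^D\oplus G^{D^\bot}$ with $G^D$ a metric on $D$) and that the ground ring has characteristic zero so that $2\theta=0$ forces $\theta=0$; both are available here.
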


Let $\xi\in D^{\bot}$ and $X\in D$, then by the definition 2.8, we have
\begin{equation}
{\nabla}_X\xi={\nabla}^L_X\xi+X\cdot G(\xi,U).
\end{equation}
Let $A_\xi:\Gamma(D)\rightarrow \Gamma(D)$ be the shape operator with respect to $\nabla^L$ defined by
\begin{equation}
G^{D^\bot}(B(X,Y),\xi)=(-1)^{|X|(|Y|+|\xi|)}G^D(Y,A_{\xi}X).
\end{equation}
Then $\pi^D{\nabla}^L_X\xi=-(-1)^{|X||\xi|}A_{\xi}X$. Let $A_X\xi=(-1)^{|X||\xi|}A_{\xi}X$, then
\begin{equation}
A_{\alpha X}\xi=\alpha A_X \xi;~~
A_X(\alpha \xi)=(-1)^{|X||\alpha|}\alpha A_X\xi£¬
\end{equation}
\begin{equation}
G^{D^\bot}(B(X,Y),\xi)=(-1)^{|X||Y|}G^D(Y,A_X{\xi}),~~
{\nabla}^L_X\xi=-A_X{\xi}+\nabla^\bot_X\xi,
\end{equation}
\begin{equation}
{\nabla}_X\xi=-\widetilde{A}_X{\xi}+\nabla^\bot_X\xi;~~
\widetilde{A}_X{\xi}=A_X{\xi}-X\cdot G(\xi,U)
\end{equation}
which we called the Weingarten formula with respect to $\nabla$ and $\nabla^{\bot}_X\xi:D\times D^{\bot}\rightarrow D^{\bot}$ is a metric connection on $D^{\bot}$ along $D$.
Given $X_1,X_2,X_3\in D$, the curvature tensor $\widetilde{R}^D$ on $D$ with respect to $\widetilde{\nabla}^D$ is defined by
\begin{equation}
\widetilde{R}^D(X_1,X_2)X_3:=\widetilde{\nabla}^D_{X_1}\widetilde{\nabla}^D_{X_2}X_3-\widetilde{\nabla}^D_{X_2}
\widetilde{\nabla}^D_{X_1}X_3-\widetilde{\nabla}^D_{[X_1,X_2]^D}X_3-\pi^D[[X_1,X_2]^{D^{\bot}},X_3].
\end{equation}
In (2.13), $\widetilde{R}^D$ is a tensor field by adding the extra term $-\pi^D[[X_1,X_2]^{D^{\bot}},X_3]$.
Given $X_1,X_2,X_3,X_4\in D$, the Riemannian curvature tensor $\widetilde{R}^D$ is defined by
$\widetilde{R}^D(X_1,X_2,X_3,X_4)=G^D(\widetilde{R}^D(X_1,X_2)X_3,X_4).$

\begin{thm}
Given $X,Y,Z,W\in D$, we have
\begin{align}
&{R}(X,Y,Z,W)=\widetilde{R}^D(X,Y,Z,W)-(-1)^{(|Y|+|Z|)|W|}g(B(X,W),B(Y,Z))\\
&+(-1)^{|X||Y|}(-1)^{(|X|+|Z|)|W|}g(B(Y,W),B(X,Z))
+(-1)^{(|Y|+|Z|)|W|}G(B(X,W),U)G(Y,Z)\notag\\
&-(-1)^{|X||Y|}(-1)^{(|X|+|Z|)|W|}G(B(Y,W),U)G(X,Z)
+(-1)^{|X|(|Y|+|Z|)}G(B(Y,Z),U)G(X,W)\notag\\
&-(-1)^{|Y||Z|}G(B(X,Z),U)G(Y,W)
-(-1)^{|X|(|Y|+|Z|)}G(Y,Z)G(U^{D^\bot},U^{D^\bot})G(X,W)\notag\\
&+(-1)^{|Y||Z|}G(X,Z)G(U^{D^\bot},U^{D^\bot})G(Y,W)+G([X,Y],B(Z,W)).
\notag
\end{align}
Here Equation (3.16) is called the Gauss equation for $D$ with respect to ${\nabla}$.
\end{thm}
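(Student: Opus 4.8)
The plan is to expand the curvature $R(X,Y)Z$ of the semi-symmetric metric connection directly from its definition (2.2), pair the result with $W\in D$ through $G$, and exploit the splitting $G=G^D\oplus G^{D^\perp}$: since $W\in D$, every $D^\perp$-valued output is annihilated by the pairing. Concretely, for $X,Y,Z\in D$ I would repeatedly apply the tangential/normal decomposition (3.7), $\nabla_XY=\widetilde\nabla^D_XY+\widetilde B(X,Y)$, together with the Weingarten formula (3.14) for a normal field, $\nabla_X\xi=-\widetilde A_X\xi+\nabla^\perp_X\xi$. Thus in $\nabla_X\nabla_YZ=\nabla_X\widetilde\nabla^D_YZ+\nabla_X\widetilde B(Y,Z)$ the first summand contributes $\widetilde\nabla^D_X\widetilde\nabla^D_YZ$ modulo a $\widetilde B$-term that dies against $W$, while the second contributes $-\widetilde A_X\widetilde B(Y,Z)$ modulo the $\nabla^\perp$-term, which again dies against $W$.

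After doing the same for the $Y\leftrightarrow X$ term and for $\nabla_{[X,Y]^D}Z$, the purely tangential double-derivative part together with the $[X,Y]^D$-term assembles, by the very definition (3.15) of $\widetilde R^D$, into $\widetilde R^D(X,Y,Z,W)$ plus the correction $G(\pi^D[[X,Y]^{D^\perp},Z],W)=G([[X,Y]^{D^\perp},Z],W)$. The two normal pieces give the shape-operator pairings $-G(\widetilde A_X\widetilde B(Y,Z),W)+(-1)^{|X||Y|}G(\widetilde A_Y\widetilde B(X,Z),W)$. I would then convert these back to the Levi-Civita second fundamental form: expanding $\widetilde A_X=A_X-X\cdot G(\cdot,U)$ and $\widetilde B(Y,Z)=B(Y,Z)-G(Y,Z)U^{D^\perp}$ via (3.14) and (3.8), and using the adjunction (3.13) in the form $G^D(A_X\eta,W)=(-1)^{|\eta||W|}G^{D^\perp}(B(X,W),\eta)$ for $\eta\in D^\perp$. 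The four resulting pieces of $-G(\widetilde A_X\widetilde B(Y,Z),W)$ produce exactly the $g(B(X,W),B(Y,Z))$ term, the cross term $G(B(X,W),U)G(Y,Z)$, the term $G(B(Y,Z),U)G(X,W)$, and the quadratic term $G(Y,Z)G(U^{D^\perp},U^{D^\perp})G(X,W)$; the swapped pairing produces their $X\leftrightarrow Y$ partners, yielding all the middle lines of (3.16).

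It remains to treat $\nabla_{[X,Y]^{D^\perp}}Z$ and match it against the left-over bracket $G([[X,Y]^{D^\perp},Z],W)$. Writing $\eta=[X,Y]^{D^\perp}\in D^\perp$ and using the torsion identity (2.6) to rewrite $\nabla_\eta Z=T_\nabla(\eta,Z)+(-1)^{|\eta||Z|}\nabla_Z\eta+[\eta,Z]$, the $[\eta,Z]$ contribution cancels the correction term above, the Weingarten formula turns $\nabla_Z\eta$ into $-\widetilde A_Z\eta$ against $W$, and the $A_Z\eta$ part of $\widetilde A_Z\eta$ collapses—again by (3.13)—to precisely $G([X,Y],B(Z,W))$, while the torsion term and the $Z\cdot G(\eta,U)$ part of $\widetilde A_Z\eta$ cancel each other. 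Collecting everything yields (3.16). The main obstacle is the Koszul sign bookkeeping: every transposition of homogeneous arguments and every passage of a form-valued coefficient $G(\cdot,\cdot)$ past a derivation produces a sign, and one must check that the several superficially different exponents (for instance $(-1)^{(|Y|+|Z|)|W|}$ versus $(-1)^{|U||W|}$) coincide on the support of each term. This is where the hypothesis $|G|+|U|=0$ is essential, since $|G(A,B)|=|A|+|B|+|G|$ then forces the relevant degrees to agree modulo $2$ on nonvanishing terms, and it is the delicate cancellation in the $[X,Y]^{D^\perp}$-analysis that I expect to be the trickiest to carry out exactly.
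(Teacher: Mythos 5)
Your proposal is correct and follows essentially the same route as the paper's proof: decompose $\nabla_X\nabla_YZ$ via the Gauss formula (3.7) and the Weingarten formula (3.14), treat $\nabla_{[X,Y]^{D^{\bot}}}Z$ through the semi-symmetric torsion identity (3.19), and convert $\widetilde{A}$, $\widetilde{B}$ back to $A$, $B$ and the $U$-terms via (3.8) and (3.13). The only cosmetic difference is that you contract with $W$ from the outset, whereas the paper first assembles the operator-level identity (3.22) and only then pairs with $W$.
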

\begin{proof}
From Equations (3.7) and (3.14), we have for $X,Y,Z\in D$
\begin{align}
{\nabla}_X{\nabla}_YZ&={\nabla}_X(\widetilde{\nabla}_Y^DZ)+{\nabla}_X(\widetilde{B}(Y,Z))\\\notag
&=\widetilde{\nabla}^D_X\widetilde{\nabla}^D_YZ+\widetilde{B}(X,\widetilde{\nabla}^D_YZ)\\\notag
&-A_X{\widetilde{B}(Y,Z)}+XG(\widetilde{B}(Y,Z),U)+\nabla^{\bot}_X(\widetilde{B}(Y,Z))\notag,
\end{align}
\begin{align}
{\nabla}_Y{\nabla}_XZ&=\widetilde{\nabla}^D_Y\widetilde{\nabla}^D_XZ+\widetilde{B}(Y,\widetilde{\nabla}^D_XZ)\\
&-A_Y{\widetilde{B}(X,Z)}+YG(\widetilde{B}(X,Z),U)+\nabla^{\bot}_Y(\widetilde{B}(X,Z))\notag,
\end{align}
By the definition 2.8 and $\nabla^L$ with zero torsion, we have
for $X_1,X_2\in {\rm Der}(\Omega(M))$,
\begin{align}
{\nabla}_{X_1}X_2=(-1)^{|X_1||X_2|}{\nabla}_{X_2}X_1+[X_1,X_2]+X_1G(X_2,U)-(-1)^{|X_1||X_2|}X_2G(X_1,U).
\end{align}
So by (3.14) and (3.19), we get
\begin{align}
{\nabla}_{[X,Y]^{D^{\bot}}}Z&=(-1)^{(|X|+|Y|)|Z|}{\nabla}_{Z}([X,Y]^{D^{\bot}})
+[[X,Y]^{D^{\bot}},Z]\\
&+[X,Y]^{D^{\bot}}G(Z,U)
-(-1)^{(|X|+|Y|)|Z|}ZG([X,Y]^{D^{\bot}},U)\notag\\
&=-(-1)^{(|X|+|Y|)|Z|}A_Z({[X,Y]^{D^{\bot}}})+(-1)^{(|X|+|Y|)|Z|}\nabla^{\bot}_Z([X,Y]^{D^{\bot}})\notag\\
&+{[X,Y]^{D^{\bot}}}G(Z,U)+[{[X,Y]^{D^{\bot}}},Z].\notag
\end{align}
\noindent By ${\nabla}_{[X,Y]}Z={\nabla}_{[X,Y]^{D}}Z+{\nabla}_{[X,Y]^{D^{\bot}}}Z$ and (3.20) and (3.7), we have
\begin{align}
&{\nabla}_{[X,Y]}Z=\widetilde{\nabla}^D_{[X,Y]^{D}}Z+\widetilde{B}([X,Y]^{D},Z)\\
&-(-1)^{(|X|+|Y|)|Z|}A_Z({[X,Y]^{D^{\bot}}})+(-1)^{(|X|+|Y|)|Z|}\nabla^{\bot}_Z([X,Y]^{D^{\bot}})\notag\\
&+{[X,Y]^{D^{\bot}}}G(Z,U)+[{[X,Y]^{D^{\bot}}},Z].\notag
\end{align}
By (3.15),(3.17),(3.18) and (3.21), we have
\begin{align}
{R}(X,Y)Z=&\widetilde{R}^D(X,Y)Z-\pi^{D^{\bot}}[{[X,Y]^{D^{\bot}}},Z]+\widetilde{B}(X,\widetilde{\nabla}^D_YZ)\\
&-(-1)^{|X||Y|}\widetilde{B}(Y,\widetilde{\nabla}^D_XZ)
-\widetilde{B}([X,Y]^{D},Z)-A_X({\widetilde{B}(Y,Z)})\notag\\
&+(-1)^{|X||Y|}A_Y({\widetilde{B}(X,Z)})
+\nabla^{\bot}_X(\widetilde{B}(Y,Z))-(-1)^{|X||Y|}\nabla^{\bot}_Y(\widetilde{B}(X,Z))\notag\\
&+XG(\widetilde{B}(Y,Z),U)-(-1)^{|X||Y|}YG(\widetilde{B}(X,Z),U)
+(-1)^{(|X|+|Y|)|Z|}A_Z({[X,Y]^{D^{\bot}}})\notag\\
&-(-1)^{(|X|+|Y|)|Z|}\nabla^{\bot}_Z([X,Y]^{D^{\bot}})-{[X,Y]^{D^{\bot}}}G(Z,U).\notag
\end{align}
By (3.8),(3.13),(3.22), we get (3.16).
\end{proof}

\begin{cor} If $U=0$, then ${\nabla}=\nabla^L$, and
given $X,Y,Z,W\in D$, we have
\begin{align}
&{R}^L(X,Y,Z,W)={R}^D(X,Y,Z,W)-(-1)^{(|Y|+|Z|)|W|}g(B(X,W),B(Y,Z))\\
&+(-1)^{|X||Y|}(-1)^{(|X|+|Z|)|W|}g(B(Y,W),B(X,Z))
+G([X,Y],B(Z,W)).
\notag
\end{align}
\end{cor}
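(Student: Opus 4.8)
The plan is to derive this corollary directly from the Gauss equation (3.16) of Theorem 3.5 by specializing to $U = 0$, so that no fresh computation is needed and the argument reduces to identifying which terms survive. The only substantive work is checking that the tangential objects $\widetilde{\nabla}^D$, $\widetilde{B}$, and $\widetilde{R}^D$ collapse to their Levi-Civita counterparts $\nabla^{D,L}$, $B$, and $R^D$ when $U$ vanishes.

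First I would record the structural simplifications forced by $U = 0$. Setting $P = U = 0$ in Definition 2.8 gives $\nabla_X Y = \nabla^L_X Y$, so the semi-symmetric metric connection coincides with the Levi-Civita connection; hence the ambient curvature $R(X,Y,Z,W)$ on the left of (3.16) becomes $R^L(X,Y,Z,W)$, as claimed. Next, the defining relations (3.8) read $\widetilde{\nabla}^D_X Y = \nabla^{D,L}_X Y + X\cdot G(Y,U) - G(X,Y)U^D$ and $\widetilde{B}(X,Y) = B(X,Y) - G(X,Y)U^{D^\bot}$; at $U = 0$ these reduce to $\widetilde{\nabla}^D = \nabla^{D,L}$ and $\widetilde{B} = B$. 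Since the extra correction term $-\pi^D[[X_1,X_2]^{D^\bot},X_3]$ in (3.15) carries no dependence on $U$, the tangential curvature $\widetilde{R}^D$ coincides with the curvature $R^D$ of $\nabla^{D,L}$, and every occurrence of $\widetilde{B}$ in (3.16) may be replaced by $B$.

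I would then substitute $U = 0$ and $U^{D^\bot} = 0$ into (3.16). Each of the four summands carrying a factor $G(\cdot,U)$ — namely $G(B(X,W),U)G(Y,Z)$, $G(B(Y,W),U)G(X,Z)$, $G(B(Y,Z),U)G(X,W)$, and $G(B(X,Z),U)G(Y,W)$ — vanishes, and so do the two summands carrying the factor $G(U^{D^\bot},U^{D^\bot})$. What survives is exactly the right-hand side of (3.23): the tangential curvature $R^D(X,Y,Z,W)$, the two quadratic terms $-(-1)^{(|Y|+|Z|)|W|}g(B(X,W),B(Y,Z))$ and $+(-1)^{|X||Y|}(-1)^{(|X|+|Z|)|W|}g(B(Y,W),B(X,Z))$, and the bracket term $G([X,Y],B(Z,W))$.

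There is essentially no obstacle, since the statement is a degeneration of an already-proved theorem; the single point needing care is the bookkeeping of the preceding paragraph, where one must confirm that $\widetilde{R}^D$ genuinely reduces to $R^D$ and that the surviving quadratic terms are expressed through $B$ rather than $\widetilde{B}$. Once those identifications are made, matching signs and grading factors term by term against (3.23) is immediate.
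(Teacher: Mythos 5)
Your proposal is correct and matches the paper's (implicit) argument: the corollary is stated without a separate proof precisely because it is the specialization $U=0$ of the Gauss equation (3.16), under which $\nabla=\nabla^L$, $\widetilde{\nabla}^D=\nabla^{D,L}$, $\widetilde{B}=B$, $\widetilde{R}^D=R^D$, and all terms containing $G(\cdot,U)$ or $G(U^{D^\bot},U^{D^\bot})$ vanish. Your bookkeeping of the surviving terms is exactly what yields (3.23).
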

\begin{thm}
Given $X,Y,Z\in\Gamma(D)$, we have
\begin{align}
({R}(X,Y)Z)^{D^{\bot}}=&(\nabla^{\bot}_X\widetilde{B})(Y,Z)-(-1)^{|X||Y|}(\nabla^{\bot}_Y\widetilde{B})(X,Z)\\
&-G(X,U)\widetilde{B}(Y,Z)+(-1)^{|X||Y|}G(Y,U)\widetilde{B}(X,Z)-\pi^{D^{\bot}}[{[X,Y]^{D^{\bot}}},Z]\notag\\
&-(-1)^{(|X|+|Y|)|Z|}\nabla^{\bot}_Z([X,Y]^{D^{\bot}})-{[X,Y]^{D^{\bot}}}G(Z,U),\notag
\end{align}
where $(\nabla^{\bot}_X\widetilde{B})(Y,Z)=\nabla^{\bot}_X(\widetilde{B}(Y,Z))-\widetilde{B}(\widetilde{\nabla}^D_XY,Z)
-(-1)^{|X||Y|}\widetilde{B}(Y,\widetilde{\nabla}^D_XZ).$ Equation (3.24) is called the Codazzi
equation with respect to ${\nabla}$.
\end{thm}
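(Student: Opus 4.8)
The plan is to obtain (3.24) by extracting the $D^{\bot}$-component of the expansion of $R(X,Y)Z$ already assembled in the proof of the Gauss equation. The master identity there, equation (3.22), writes $R(X,Y)Z$ as a sum of summands each of which lies wholly in $D$ or wholly in $D^{\bot}$; applying $\pi^{D^{\bot}}$ therefore reduces to discarding the $D$-valued summands and regrouping what survives.

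First I would sort the terms of (3.22). The summand $\widetilde{R}^D(X,Y)Z$ lies in $D$ by construction; the shape-operator terms $-A_X(\widetilde{B}(Y,Z))$, $(-1)^{|X||Y|}A_Y(\widetilde{B}(X,Z))$ and $(-1)^{(|X|+|Y|)|Z|}A_Z([X,Y]^{D^{\bot}})$ lie in $D$ because $A_\xi:\Gamma(D)\to\Gamma(D)$; and the two terms $XG(\widetilde{B}(Y,Z),U)$ and $-(-1)^{|X||Y|}YG(\widetilde{B}(X,Z),U)$ lie in $D$ since each is $X$ (resp.\ $Y$) rescaled by a function of $\Omega(M)$ and $D$ is an $\Omega(M)$-submodule. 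These are exactly the terms that feed the Gauss equation through the pairing with $W\in D$. The remaining summands --- those built from $\widetilde{B}$, from $\nabla^{\bot}$, the projected bracket $-\pi^{D^{\bot}}[[X,Y]^{D^{\bot}},Z]$, and the term $-[X,Y]^{D^{\bot}}G(Z,U)$ --- all take values in $D^{\bot}$, and their sum is $(R(X,Y)Z)^{D^{\bot}}$.

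Next I would recognize this sum as the right-hand side of (3.24). The three terms $-\pi^{D^{\bot}}[[X,Y]^{D^{\bot}},Z]$, $-(-1)^{(|X|+|Y|)|Z|}\nabla^{\bot}_Z([X,Y]^{D^{\bot}})$ and $-[X,Y]^{D^{\bot}}G(Z,U)$ appear verbatim. For the rest I would invoke the stated formula for $(\nabla^{\bot}_X\widetilde{B})(Y,Z)$ and its $X\leftrightarrow Y$ counterpart, so that $\nabla^{\bot}_X(\widetilde{B}(Y,Z))$, $-(-1)^{|X||Y|}\nabla^{\bot}_Y(\widetilde{B}(X,Z))$, $\widetilde{B}(X,\widetilde{\nabla}^D_YZ)$ and $-(-1)^{|X||Y|}\widetilde{B}(Y,\widetilde{\nabla}^D_XZ)$ reorganize into $(\nabla^{\bot}_X\widetilde{B})(Y,Z)-(-1)^{|X||Y|}(\nabla^{\bot}_Y\widetilde{B})(X,Z)$ at the cost of the additional mixed terms $+\widetilde{B}(\widetilde{\nabla}^D_XY,Z)$ and $-(-1)^{|X||Y|}\widetilde{B}(\widetilde{\nabla}^D_YX,Z)$, which must then be reconciled with the surviving $-\widetilde{B}([X,Y]^D,Z)$.

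The only genuinely delicate step --- and the one I expect to be the main obstacle --- is this reconciliation, which is where the semi-symmetric correction to the torsion enters. Taking the $D$-component of the torsion formula (3.9) gives $\widetilde{\nabla}^D_XY-(-1)^{|X||Y|}\widetilde{\nabla}^D_YX=[X,Y]^D+XG(Y,U)-(-1)^{|X||Y|}YG(X,U)$, so the three leftover terms $\widetilde{B}(\widetilde{\nabla}^D_XY,Z)-(-1)^{|X||Y|}\widetilde{B}(\widetilde{\nabla}^D_YX,Z)-\widetilde{B}([X,Y]^D,Z)$ collapse to $\widetilde{B}(XG(Y,U),Z)-(-1)^{|X||Y|}\widetilde{B}(YG(X,U),Z)$. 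I would simplify these using the $\Omega(M)$-linearity of $\widetilde{B}$ in its first argument (from (3.4) and (3.8)) together with the sign rule $X\alpha=(-1)^{|X||\alpha|}\alpha X$. Here it is essential that $U$ is even: since $P=U$ satisfies $|G|+|P|=0$ with $G=G_g$ even, the function $G(Y,U)$ has degree $|Y|$, so passing it across $X$ contributes exactly the sign $(-1)^{|X||Y|}$. Tracking these signs shows the leftover collapses to precisely $-G(X,U)\widetilde{B}(Y,Z)+(-1)^{|X||Y|}G(Y,U)\widetilde{B}(X,Z)$, the two remaining terms of (3.24), which completes the identification.
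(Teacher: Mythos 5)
Your proposal is correct and follows essentially the same route as the paper's own proof: project the expansion (3.22) of $R(X,Y)Z$ onto $D^{\bot}$ to get (3.25), then absorb the $\widetilde{B}(X,\widetilde{\nabla}^D_YZ)$, $\nabla^{\bot}(\widetilde{B})$ and $\widetilde{B}([X,Y]^D,Z)$ terms into $(\nabla^{\bot}_X\widetilde{B})(Y,Z)-(-1)^{|X||Y|}(\nabla^{\bot}_Y\widetilde{B})(X,Z)$ via the torsion identity (3.26), the leftover producing exactly the $G(X,U)\widetilde{B}(Y,Z)$ terms. One small correction to your parity remark: $G_g$ is odd (e.g.\ $\left<L_{\overline{X}},i_{\overline{Y}};G_g\right>=g(\overline{X},\overline{Y})$ has degree $0$ while $|L_{\overline{X}}|+|i_{\overline{Y}}|=1$), so $U$ is odd rather than even; what your sign bookkeeping actually uses is only $|G|+|U|=0$, which still yields $|G(Y,U)|=|Y|$ and the sign $(-1)^{|X||Y|}$, so the argument is unaffected.
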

\begin{proof}
From (3.22), we have
\begin{align}
(\widetilde{R}(X,Y)Z)^{D^{\bot}}=&-\pi^{D^{\bot}}[{[X,Y]^{D^{\bot}}},Z]+\widetilde{B}(X,\widetilde{\nabla}^D_YZ)\\
&-(-1)^{|X||Y|}\widetilde{B}(Y,\widetilde{\nabla}^D_XZ)
-\widetilde{B}([X,Y]^{D},Z)
+\nabla^{\bot}_X(\widetilde{B}(Y,Z))\notag\\
&-(-1)^{|X||Y|}\nabla^{\bot}_Y(\widetilde{B}(X,Z))
-(-1)^{(|X|+|Y|)|Z|}\nabla^{\bot}_Z([X,Y]^{D^{\bot}})-{[X,Y]^{D^{\bot}}}G(Z,U).\notag
\end{align}
By (3.19), we have for $X,Y\in D$,
\begin{align}
[X,Y]^D=\widetilde{\nabla}^D_XY-(-1)^{|X||Y|}\widetilde{\nabla}^D_YX-XG(Y,U)+(-1)^{|X||Y|}YG(X,U).
 \end{align}
 By (3.26) and the definition of $(\nabla^{\bot}_X\widetilde{B})(Y,Z)$ and (3.25), we get (3.24).
\end{proof}
\begin{cor} If $U=0$, then we have
\begin{align}
({R^L}(X,Y)Z)^{D^{\bot}}=&(\nabla^{\bot}_X{B})(Y,Z)-(-1)^{|X||Y|}(\nabla^{\bot}_Y{B})(X,Z)\\
&-\pi^{D^{\bot}}[{[X,Y]^{D^{\bot}}},Z]-(-1)^{(|X|+|Y|)|Z|}\nabla^{\bot}_Z([X,Y]^{D^{\bot}}).\notag
\end{align}
\end{cor}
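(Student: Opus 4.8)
The plan is to obtain this corollary as the $U=0$ specialization of the general Codazzi equation (3.24). First I would note that, by Definition 2.8, setting $U=0$ makes ${\nabla}_XY=\nabla^L_XY$ for all homogeneous $X,Y\in{\rm Der}\Omega(M)$; hence the semi-symmetric metric connection collapses to the Levi-Civita graded connection $\nabla^L$ and its curvature satisfies $R=R^L$. In particular the left-hand side $(R(X,Y)Z)^{D^{\bot}}$ of (3.24) becomes the left-hand side $(R^L(X,Y)Z)^{D^{\bot}}$ of (3.27).

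Next I would feed $U=0$ into the structural identities (3.8). Since $U=0$ forces $U^D=U^{D^{\bot}}=0$, these give $\widetilde{\nabla}^D_XY=\nabla^{D,L}_XY$ and $\widetilde{B}(X,Y)=B(X,Y)$. Substituting both into the defining formula for $(\nabla^{\bot}_X\widetilde{B})(Y,Z)$ recorded just below (3.24) reduces it verbatim to $(\nabla^{\bot}_XB)(Y,Z)=\nabla^{\bot}_X(B(Y,Z))-B(\nabla^{D,L}_XY,Z)-(-1)^{|X||Y|}B(Y,\nabla^{D,L}_XZ)$, the covariant derivative of the second fundamental form $B$ of $\nabla^L$; the same replacement turns the two $\widetilde{B}$-terms on the right of (3.24) into the corresponding $B$-terms of (3.27). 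I would also record, from (3.13)--(3.14), that the normal connection $\nabla^{\bot}$ on $D^{\bot}$ along $D$ is defined through $\nabla^L$ and carries no $U$-dependence (only the Weingarten datum $\widetilde{A}_X\xi=A_X\xi-X\cdot G(\xi,U)$ changes, reverting to $A_X\xi$, and it does not enter the Codazzi identity), so $\nabla^{\bot}$ passes through the specialization unchanged.

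Finally I would discard the three terms of (3.24) carrying an explicit factor $G(\cdot,U)$, namely $-G(X,U)\widetilde{B}(Y,Z)$, $+(-1)^{|X||Y|}G(Y,U)\widetilde{B}(X,Z)$ and $-[X,Y]^{D^{\bot}}G(Z,U)$; each vanishes identically when $U=0$. The two remaining tensor terms $-\pi^{D^{\bot}}[[X,Y]^{D^{\bot}},Z]$ and $-(-1)^{(|X|+|Y|)|Z|}\nabla^{\bot}_Z([X,Y]^{D^{\bot}})$ contain no $U$ and pass through unchanged. Collecting the surviving terms yields exactly (3.27). I do not anticipate any genuine obstacle: the argument is a direct substitution, and the only care needed is bookkeeping --- verifying that $\widetilde{\nabla}^D$, $\widetilde{B}$ and the derivative $(\nabla^{\bot}_X\widetilde{B})$ degenerate to their $\nabla^L$-counterparts simultaneously, so that the grading signs in (3.24) and (3.27) match term by term.
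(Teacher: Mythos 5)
Your proposal is correct and is exactly the route the paper intends: the corollary is stated without proof as the immediate $U=0$ specialization of the Codazzi equation (3.24), using (3.8) to see that $\widetilde{\nabla}^D$, $\widetilde{B}$, and hence $(\nabla^{\bot}_X\widetilde{B})$ collapse to $\nabla^{D,L}$, $B$, and $(\nabla^{\bot}_X B)$, while the three $G(\cdot,U)$ terms vanish. Your bookkeeping of the surviving terms matches (3.27), so there is nothing to add.
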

\begin{thm}
Given $X,Y\in D$, $\xi\in D^{\bot}$, we have
\begin{align}
({R}(X,Y)\xi)^{D^{\bot}}=-\widetilde{B}(X,\widetilde{A}_Y{\xi})+(-1)^{|X||Y|}\widetilde{B}(Y,\widetilde{A}_X{\xi})
+\widetilde{R}^{L^{\bot}}(X,Y)\xi
\end{align}
where
\begin{align}
\widetilde{R}^{L^{\bot}}(X,Y)\xi:=\nabla^{\bot}_X\nabla^{\bot}_Y\xi-(-1)^{|X||Y|}\nabla^{\bot}_Y\nabla^{\bot}_X\xi-\nabla^{\bot}_{[X,Y]^D}\xi
-\pi^{D^{\bot}}\widetilde{\nabla}_{[X,Y]^{\bot}}\xi.
\end{align}
 Equation (3.28) is called the Ricci
equation for $D$ with respect to ${\nabla}$.
\end{thm}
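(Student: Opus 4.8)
The plan is to run exactly the bookkeeping used in the proofs of the Gauss and Codazzi equations (Theorems for (3.16) and (3.24)), but now starting from $R(X,Y)\xi$ and projecting onto $D^{\bot}$. First I would expand the curvature using Definition 2.6, i.e. $R(X,Y)\xi=\nabla_X\nabla_Y\xi-(-1)^{|X||Y|}\nabla_Y\nabla_X\xi-\nabla_{[X,Y]}\xi$, and then compute the $D^{\bot}$-component of each of the three terms by repeatedly applying the Gauss decomposition (3.7) for arguments lying in $D$ and the Weingarten formula (3.14) for arguments lying in $D^{\bot}$.

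For the first term, the Weingarten formula (3.14) gives $\nabla_Y\xi=-\widetilde{A}_Y\xi+\nabla^{\bot}_Y\xi$, where $\widetilde{A}_Y\xi\in D$ and $\nabla^{\bot}_Y\xi\in D^{\bot}$. Applying $\nabla_X$ and using (3.7) on the $D$-part together with (3.14) on the $D^{\bot}$-part, the term $\widetilde{\nabla}^D_X(\widetilde{A}_Y\xi)$ and the term $-\widetilde{A}_X(\nabla^{\bot}_Y\xi)$ both lie in $D$ and drop out under $\pi^{D^{\bot}}$, leaving $(\nabla_X\nabla_Y\xi)^{D^{\bot}}=-\widetilde{B}(X,\widetilde{A}_Y\xi)+\nabla^{\bot}_X\nabla^{\bot}_Y\xi$. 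Interchanging the roles of $X$ and $Y$ yields $(\nabla_Y\nabla_X\xi)^{D^{\bot}}=-\widetilde{B}(Y,\widetilde{A}_X\xi)+\nabla^{\bot}_Y\nabla^{\bot}_X\xi$, and multiplying by $-(-1)^{|X||Y|}$ produces the two shape-operator terms $-\widetilde{B}(X,\widetilde{A}_Y\xi)+(-1)^{|X||Y|}\widetilde{B}(Y,\widetilde{A}_X\xi)$ of (3.28) together with $\nabla^{\bot}_X\nabla^{\bot}_Y\xi-(-1)^{|X||Y|}\nabla^{\bot}_Y\nabla^{\bot}_X\xi$.

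For the bracket term I would split $[X,Y]=[X,Y]^{D}+[X,Y]^{D^{\bot}}$. The piece $\nabla_{[X,Y]^{D}}\xi$ falls under (3.14) and contributes $\nabla^{\bot}_{[X,Y]^{D}}\xi$ to $D^{\bot}$, while the piece $\nabla_{[X,Y]^{D^{\bot}}}\xi$ has both arguments in $D^{\bot}$ and contributes $\pi^{D^{\bot}}\widetilde{\nabla}_{[X,Y]^{D^{\bot}}}\xi$. Collecting the four $\nabla^{\bot}$-type terms and comparing with the definition (3.29), they assemble precisely into $\widetilde{R}^{L^{\bot}}(X,Y)\xi$, which finishes the identity (3.28). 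The main obstacle is not any single hard estimate but the disciplined tracking of the $\mathbb{Z}_2$-grading signs through each application of (3.14) and (3.7); in particular one must verify that $\widetilde{A}_Y\xi$ is genuinely homogeneous in $D$ so that (3.7) applies to it, and one must correctly identify the extra term $\pi^{D^{\bot}}\widetilde{\nabla}_{[X,Y]^{D^{\bot}}}\xi$, which is the graded analogue of the nontensorial correction that already appeared in (3.15) and (3.24) and has no counterpart in the classical Ricci equation.
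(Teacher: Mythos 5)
Your proposal is correct and follows essentially the same route as the paper: the paper likewise expands $R(X,Y)\xi$ via Definition 2.6, computes ${\nabla}_X{\nabla}_Y\xi=-\widetilde{\nabla}^D_X(\widetilde{A}_Y{\xi})-\widetilde{B}(X,\widetilde{A}_Y{\xi})-\widetilde{A}_X{\nabla^{\bot}_Y\xi}+\nabla^{\bot}_X\nabla^{\bot}_Y\xi$ (and its $X\leftrightarrow Y$ counterpart) from (3.7) and (3.14), splits ${\nabla}_{[X,Y]}\xi$ along $[X,Y]=[X,Y]^D+[X,Y]^{D^\bot}$, and reads off the $D^{\bot}$-components. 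The terms you discard as lying in $D$ and the residual term $\pi^{D^{\bot}}\widetilde{\nabla}_{[X,Y]^{D^{\bot}}}\xi$ are exactly those appearing in the paper's equations (3.30)--(3.32).
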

\begin{proof}
From (3.7) and (3.14), we have
\begin{align}
{\nabla}_X{\nabla}_Y\xi
&=-\widetilde{\nabla}^D_X(\widetilde{A}_Y{\xi})-\widetilde{B}(X,\widetilde{A}_Y{\xi})-\widetilde{A}_X{\nabla^{\bot}_Y\xi}
+\nabla^{\bot}_X\nabla^{\bot}_Y\xi,
\end{align}
\begin{align}
{\nabla}_Y{\nabla}_X\xi=-\widetilde{\nabla}^D_Y(\widetilde{A}_X{\xi})-\widetilde{B}(Y,\widetilde{A}_X{\xi})-
\widetilde{A}_Y{\nabla^{\bot}_X\xi}+\nabla^{\bot}_Y\nabla^{\bot}_X\xi,
\end{align}
\begin{align}
{\nabla}_{[X,Y]}\xi
=&-\widetilde{A}_{[X,Y]^D}{\xi}+\nabla^{\bot}_{[X,Y]^D}\xi+\pi^D\widetilde{\nabla}_{[X,Y]^{D^{\bot}}}\xi
+\pi^{D^{\bot}}\widetilde{\nabla}_{[X,Y]^{D^{\bot}}}\xi.
\end{align}
From (3.29)-(3.32), we get (3.28).
\end{proof}
\begin{cor} If $U=0$, then we have
\begin{align}
({R}^L(X,Y)\xi)^{D^{\bot}}=-{B}(X,{A}_{\xi}Y)+(-1)^{|X||Y|}{B}(Y,{A}_{\xi}X)+{R}^{L^{\bot}}(X,Y)\xi,
\end{align}
where
\begin{align}
{R}^{L^{\bot}}(X,Y)\xi:=\nabla^{\bot}_X\nabla^{\bot}_Y\xi-(-1)^{|X||Y|}\nabla^{\bot}_Y\nabla^{\bot}_X\xi-
\nabla^{\bot}_{[X,Y]^D}\xi-\pi^{D^{\bot}}{\nabla}^L_{[X,Y]^{\bot}}\xi.
\end{align}
\end{cor}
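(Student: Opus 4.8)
The plan is to obtain this corollary as the specialization $U=0$ of the Ricci equation of the preceding theorem, so that essentially no new computation is required: all the structural work is already contained in (3.28)--(3.32). First I would invoke Corollary 3.4, which asserts that when $U=0$ the semi-symmetric metric connection collapses to the Levi-Civita connection, $\nabla=\nabla^L$, and hence $R=R^L$. Consequently the left-hand side $(R(X,Y)\xi)^{D^\bot}$ of (3.28) becomes $(R^L(X,Y)\xi)^{D^\bot}$, which is precisely the left-hand side of (3.33).

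Next I would track how each $U$-dependent correction term disappears. From the Weingarten identity (3.14), $\widetilde A_X\xi = A_X\xi - X\cdot G(\xi,U)$, so at $U=0$ we get $\widetilde A_X\xi = A_X\xi$; here it is important that the normal connection $\nabla^\bot$ appearing in both (3.13) and (3.14) is literally the same operator, since $X\cdot G(\xi,U)$ is $D$-valued and only shifts the shape-operator part, so $\nabla^\bot$ is untouched by the specialization. From (3.8), $\widetilde B(X,Y)=B(X,Y)-G(X,Y)U^{D^\bot}$, so $\widetilde B = B$ when $U=0$. Finally, in the definition (3.29) of $\widetilde R^{L^\bot}$ the only place $U$ enters is through $\widetilde\nabla=\nabla$ in the correction term $-\pi^{D^\bot}\widetilde\nabla_{[X,Y]^\bot}\xi$; since $\nabla=\nabla^L$ at $U=0$, this becomes $-\pi^{D^\bot}\nabla^L_{[X,Y]^\bot}\xi$, exactly the term in (3.34). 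Substituting these three reductions into (3.28)--(3.29) yields (3.33)--(3.34).

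The one place requiring care is matching the second-fundamental-form terms to the form printed in the corollary. Direct substitution of $U=0$ into (3.28) produces $-B(X,A_Y\xi)+(-1)^{|X||Y|}B(Y,A_X\xi)$, whereas (3.33) is written with the shape operator in the order $A_\xi Y$, $A_\xi X$. To reconcile the two I would use the defining relation $A_X\xi=(-1)^{|X||\xi|}A_\xi X$, which converts $A_Y\xi$ into a scalar multiple of $A_\xi Y$ (and likewise for $A_X\xi$); keeping track of the induced $\mathbb{Z}_2$-graded sign $(-1)^{|Y||\xi|}$ is the only subtle bookkeeping, and is in fact the sole genuine point of attention in the whole argument.

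Alternatively, and equivalently, I could prove (3.33) from scratch by repeating the argument of the preceding theorem with $\nabla$ replaced throughout by $\nabla^L$: expand $\nabla^L_Y\xi=-A_Y\xi+\nabla^\bot_Y\xi$ by the Weingarten formula (3.13), apply $\nabla^L_X$ using the Gauss decomposition (3.3) on the $D$-valued summand $A_Y\xi$ and (3.13) on the $D^\bot$-valued summand $\nabla^\bot_Y\xi$, treat the interchanged term with its graded sign, split $\nabla^L_{[X,Y]}\xi$ according to $[X,Y]=[X,Y]^{D}+[X,Y]^{D^\bot}$, and project onto $D^\bot$. Because this is merely the general Ricci equation at $U=0$, there is no real obstacle beyond the sign issue already noted; as in the general case, the non-integrability term $-\pi^{D^\bot}\nabla^L_{[X,Y]^\bot}\xi$ in $R^{L^\bot}$ arises exactly from the component of $[X,Y]$ lying in $D^\bot$, on which the Weingarten formula cannot be applied directly.
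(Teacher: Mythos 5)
Your proposal is correct and is exactly the paper's (implicit) argument: the corollary is obtained by setting $U=0$ in the Ricci equation (3.28)--(3.29), using $\nabla=\nabla^L$, $\widetilde B=B$ and $\widetilde A_X\xi=A_X\xi$, with no further computation given in the paper. You also rightly flag the only delicate point, namely that direct substitution yields $B(X,A_Y\xi)$ and $B(Y,A_X\xi)$, and converting these via $A_Y\xi=(-1)^{|Y||\xi|}A_\xi Y$ introduces signs $(-1)^{|Y||\xi|}$, $(-1)^{|X||\xi|}$ that the printed statement of the corollary actually suppresses.
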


\section{The Lie derivative of connections for distributions on the algebra of differential forms}
 \indent The setup in this section is same as the section 3.
 \begin{defn}Suppose $X\in D$ and $L_X:D\rightarrow D;~~Y\mapsto [X,Y]^D$. The mapping
 \begin{align}
&L_X(\nabla^{D,L}):D\times D\rightarrow D,\\
&L_X(\nabla^{D,L})(Y,Z):=L_X(\nabla^{D,L}_YZ)-\nabla^{D,L}_{[X,Y]^D}Z-(-1)^{|X||Y|}\nabla^{D,L}_Y([X,Z]^D),\notag
\end{align}
 is called the Lie derivative of the partial connection $\nabla^{D,L}$ along the operator $X\in D$. The mapping
 \begin{align}
&L_X(\widetilde{\nabla}^{D}):D\times D\rightarrow D,\\
&L_X(\widetilde{\nabla}^{D})(Y,Z):=L_X(\widetilde{\nabla}^{D}_YZ)-\widetilde{\nabla}^{D}_{[X,Y]^D}Z
-(-1)^{|X||Y|}\widetilde{\nabla}^{D}_Y([X,Z]^D),\notag
\end{align}
 is called the Lie derivative of the partial connection $\widetilde{\nabla}^{D}$ along the operator $X\in D$
 \end{defn}
 \begin{prop}For $X,Y,Z,W\in D$, then we have
  \begin{align}
&[L_X,L_Y]({\nabla}^{D,L})(Z,W):=[L_X,L_Y]({\nabla}^{D,L}_ZW)+(-1)^{|X||Y|}{\nabla}^{D,L}_{[Y,[X,Z]^D]^D}W\\
&+(-1)^{|X||Y|+|X||Z|+|Y||Z|}{\nabla}^{D,L}_Z[Y,[X,W]^D]^D-{\nabla}^{D,L}_{[X,[Y,Z]^D]^D}W\notag\\
&-(-1)^{(|X|+|Y|)|Z|}{\nabla}^{D,L}_Z[X,[Y,W]^D]^D.\notag
\end{align}
(4.3) is also correct for $\widetilde{\nabla}^{D}$.
 \end{prop}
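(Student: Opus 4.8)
The plan is to read $[L_X,L_Y]$ as the graded commutator $L_X\circ L_Y-(-1)^{|X||Y|}L_Y\circ L_X$ of Lie-derivative operators and to compute it by direct expansion from Definition 4.1, the only real subtlety being that the intermediate object $L_Y(\nabla^{D,L})$ is not a connection but a bilinear map of nonzero degree, so the second Lie derivative must carry degree-dependent signs. Concretely, I would regard $L_X$ as a graded derivation of degree $|X|$ acting on any bilinear $T:D\times D\to D$ of degree $|T|$ through the graded Leibniz rule $L_X(T(Z,W))=(L_XT)(Z,W)+(-1)^{|X||T|}T([X,Z]^D,W)+(-1)^{|X|(|T|+|Z|)}T(Z,[X,W]^D)$, where $L_X(T(Z,W))=[X,T(Z,W)]^D$ and $L_XZ=[X,Z]^D$. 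For $|T|=0$ this is exactly Definition 4.1, and for $T=L_Y(\nabla^{D,L})$, which has degree $|Y|$, it yields the same formula with an extra factor $(-1)^{|X||Y|}$ in front of the two argument-shift terms. I expect this factor to be precisely the feature that produces the $(-1)^{|X||Y|}$ decorating the right-hand side of (4.3).

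First I would expand $L_Y(\nabla^{D,L})(Z,W)=[Y,\nabla^{D,L}_ZW]^D-\nabla^{D,L}_{[Y,Z]^D}W-(-1)^{|Y||Z|}\nabla^{D,L}_Z[Y,W]^D$ and substitute it into $(L_XT)(Z,W)=[X,T(Z,W)]^D-(-1)^{|X||Y|}T([X,Z]^D,W)-(-1)^{|X|(|Y|+|Z|)}T(Z,[X,W]^D)$, obtaining nine terms. Next I would run the identical expansion with $X$ and $Y$ interchanged to get $L_Y(L_X(\nabla^{D,L}))(Z,W)$ (now $|T|=|X|$), multiply it by $-(-1)^{|X||Y|}$, and add the two blocks. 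The pair $[X,[Y,\nabla^{D,L}_ZW]^D]^D-(-1)^{|X||Y|}[Y,[X,\nabla^{D,L}_ZW]^D]^D$ collects into $[L_X,L_Y](\nabla^{D,L}_ZW)$, while the four surviving double-bracket terms $\nabla^{D,L}_{[Y,[X,Z]^D]^D}W$, $\nabla^{D,L}_Z[Y,[X,W]^D]^D$, $\nabla^{D,L}_{[X,[Y,Z]^D]^D}W$ and $\nabla^{D,L}_Z[X,[Y,W]^D]^D$ should emerge with exactly the coefficients displayed in (4.3).

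The crux of the verification, and the main bookkeeping obstacle, is that six remaining ``mixed'' terms (three of shape $[X,\nabla^{D,L}_{\bullet}W]^D$ or $[Y,\nabla^{D,L}_{\bullet}W]^D$ and three of shape $\nabla^{D,L}_{\bullet}[\,\cdot\,,W]^D$) must cancel in pairs across the two blocks, and keeping the graded signs straight is the whole difficulty. I would check each cancellation by comparing signs: for instance $-[X,\nabla^{D,L}_{[Y,Z]^D}W]^D$ from the first block meets $+(-1)^{2|X||Y|}[X,\nabla^{D,L}_{[Y,Z]^D}W]^D$ from the second, and $-(-1)^{|Y||Z|}[X,\nabla^{D,L}_Z[Y,W]^D]^D$ meets $+(-1)^{|X||Y|+|Y|(|X|+|Z|)}[X,\nabla^{D,L}_Z[Y,W]^D]^D$, both collapsing once the squared signs are reduced; the other four cancel analogously. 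It is worth emphasizing that no Jacobi-type identity for $[\cdot,\cdot]^D$ is invoked, and indeed $[\cdot,\cdot]^D$ fails the Jacobi identity because of the $D^\bot$ corrections recorded in (3.2); the right-hand side of (4.3) is arranged precisely so that all simplification is pairwise sign cancellation. Finally, since the entire computation uses only the defining formula of the Lie derivative (Definition 4.1) together with the $\Omega(M)$-bilinearity packaged in (3.1), and since $\widetilde{\nabla}^D$ satisfies a defining relation of identical shape, the same expansion word for word establishes (4.3) for $\widetilde{\nabla}^D$.
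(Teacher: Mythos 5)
Your proposal is correct and follows essentially the same route as the paper: the paper's displayed identity (4.4) is exactly your iterated expansion of $L_X(L_Y(\nabla^{D,L}))(Z,W)$ with the extra $(-1)^{|X||Y|}$ on the argument-shift terms coming from the degree $|Y|$ of $L_Y(\nabla^{D,L})$, after which the graded commutator is formed and the mixed terms cancel in pairs exactly as you describe, with no appeal to a Jacobi identity. The only discrepancy is a harmless miscount of the mixed terms (per block there are four of bracket-outside type and two of bracket-inside type, not three and three), which does not affect the argument.
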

 \begin{proof}
By the definition 4.1, we have
 \begin{align}
&L_X(L_Y({\nabla}^{D,L}))(Z,W):=L_XL_Y({\nabla}^{D,L}_ZW)-L_X(\nabla^{D,L}_{[Y,Z]^D}W)\\
&-(-1)^{|Y||Z|}L_X(\nabla^{D,L}_Z([Y,W]^D))
-(-1)^{|X||Y|}L_Y(\nabla^{D,L}_{[X,Z]^D}W)\notag\\
&-(-1)^{|X|(|Y|+|Z|)}L_Y(\nabla^{D,L}_Z([X,W]^D))
+(-1)^{|X||Y|}\nabla^{D,L}_{[Y,[X,Z]^D]^D}W\notag\\
&+(-1)^{|Y||Z|}\nabla^{D,L}_{[X,Z]^D}[Y,W]^D+(-1)^{|X|(|Y|+|Z|)}\nabla^{D,L}_{[Y,Z]^D}[X,W]^D\notag\\
&+(-1)^{|X||Y|+|X||Z|+|Y||Z|}{\nabla}^{D,L}_Z[Y,[X,W]^D]^D.\notag
\end{align}
By (4.4) and $[L_X,L_Y]({\nabla}^{D,L})=L_X(L_Y({\nabla}^{D,L}))-(-1)^{|X||Y|}L_Y(L_X({\nabla}^{D,L}))$, we get (4.3).
\end{proof}
\begin{cor} If $D$ is integrable , then we have
\begin{align}
&[L_X,L_Y]({\nabla}^{D,L})=L_{[X,Y]}({\nabla}^{D,L}).
\end{align}
 \end{cor}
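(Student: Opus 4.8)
The plan is to take the explicit expansion of $[L_X,L_Y]({\nabla}^{D,L})(Z,W)$ provided by Proposition 4.2 (equation (4.3)) and to show that, once $D$ is integrable, its five terms reassemble into $L_{[X,Y]}({\nabla}^{D,L})(Z,W)$. The only structural input I would invoke is the \emph{graded Jacobi identity}
\[
[[X,Y],V]=[X,[Y,V]]-(-1)^{|X||Y|}[Y,[X,V]],
\]
for homogeneous $X,Y,V\in{\rm Der}\,\Omega(M)$, together with integrability, which says $[A,B]^{D^{\bot}}=0$ and hence $[A,B]^{D}=[A,B]\in D$ for all $A,B\in D$; the latter lets me erase every $(\cdot)^{D}$-projection sitting on an iterated bracket of elements of $D$.

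First I would unwind the defining equation (4.1) with the operator $[X,Y]$ (of degree $|X|+|Y|$), obtaining
\begin{align*}
L_{[X,Y]}({\nabla}^{D,L})(Z,W)=&\,L_{[X,Y]}({\nabla}^{D,L}_ZW)-{\nabla}^{D,L}_{[[X,Y],Z]}W\\
&-(-1)^{(|X|+|Y|)|Z|}{\nabla}^{D,L}_Z[[X,Y],W],
\end{align*}
where integrability has removed the projections. The task is then to match this term by term against the right-hand side of (4.3).

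For the leading term I would use that $L_AV=[A,V]$ on $D$, so that on any $V\in D$ (in particular $V={\nabla}^{D,L}_ZW$)
\[
[L_X,L_Y]V=[X,[Y,V]]-(-1)^{|X||Y|}[Y,[X,V]]=[[X,Y],V]=L_{[X,Y]}V
\]
by the Jacobi identity; this identifies the first term of (4.3) with the first term above. For the four remaining terms I would group the two acting in the first slot of ${\nabla}^{D,L}$ (subscripts $[Y,[X,Z]]$ and $[X,[Y,Z]]$) and the two acting in the second slot (brackets $[Y,[X,W]]$ and $[X,[Y,W]]$). Noting that the exponent $|X||Y|+|X||Z|+|Y||Z|$ factors as $|X||Y|+(|X|+|Y|)|Z|$, in each group I would pull the common sign outside, use additivity of ${\nabla}^{D,L}$ in the relevant argument, and apply $(-1)^{|X||Y|}[Y,[X,\,\cdot\,]]-[X,[Y,\,\cdot\,]]=-[[X,Y],\,\cdot\,]$ to collapse the two brackets. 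This produces exactly $-{\nabla}^{D,L}_{[[X,Y],Z]}W$ and $-(-1)^{(|X|+|Y|)|Z|}{\nabla}^{D,L}_Z[[X,Y],W]$, the two remaining terms above.

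The one genuine obstacle is the sign bookkeeping: I must verify that the Koszul-type exponents in (4.3) factor and cancel as claimed and that the Jacobi identity is applied with the correct sign in each slot. After the five terms are matched, the asserted equality of the mappings $[L_X,L_Y]({\nabla}^{D,L})$ and $L_{[X,Y]}({\nabla}^{D,L})$ follows. Since Proposition 4.2 holds verbatim for $\widetilde{\nabla}^{D}$, the same argument also yields $[L_X,L_Y](\widetilde{\nabla}^{D})=L_{[X,Y]}(\widetilde{\nabla}^{D})$ in the integrable case.
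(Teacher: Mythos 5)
Your proposal is correct and takes essentially the same route as the paper: the paper's proof likewise starts from the expansion (4.3) of Proposition 4.2, uses integrability to drop the $(\cdot)^D$ projections on iterated brackets, and applies the graded Jacobi identity to collapse the first term into $L_{[X,Y]}(\nabla^{D,L}_ZW)$ and the remaining four terms, grouped exactly as you group them, into $-\nabla^{D,L}_{[[X,Y],Z]}W$ and $-(-1)^{(|X|+|Y|)|Z|}\nabla^{D,L}_Z[[X,Y],W]$. Your sign bookkeeping (the factorization $|X||Y|+|X||Z|+|Y||Z|=|X||Y|+(|X|+|Y|)|Z|$ and the form of the Jacobi identity used) matches the paper's equation (4.7).
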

 \begin{proof}
By the graded Jacobi identity:
 \begin{align}
&[X,[Y,Z]]-[[X,Y],Z]-(-1)^{|X||Y|}[Y,[X,Z]]=0,
\end{align}
and $D$ is integrable, we have
\begin{align}
&[L_X,L_Y]({\nabla}^{D,L}_ZW)=L_{[X,Y]}({\nabla}^{D,L}_ZW),\\
&
(-1)^{|X||Y|}{\nabla}^{D,L}_{[Y,[X,Z]^D]^D}W
-{\nabla}^{D,L}_{[X,[Y,Z]^D]^D}W=-{\nabla}^{D,L}_{[[X,Y],Z]}W\notag\\
&(-1)^{|X||Y|+|X||Z|+|Y||Z|}{\nabla}^{D,L}_Z[Y,[X,W]^D]^D
-(-1)^{(|X|+|Y|)|Z|}{\nabla}^{D,L}_Z[X,[Y,W]^D]^D\notag\\
&=-(-1)^{(|X|+|Y|)|Z|}{\nabla}^{D,L}_Z[[X,Y],W],\notag
\end{align}
By (4.3) and (4.7), we get (4.5).
\end{proof}
 \begin{defn}Suppose $X,Y,Z,W\in D$. The mapping
 \begin{align}
&L_X(R^{D,L}):D\times D\times D\rightarrow D; (Y,Z,W)\rightarrow (L_X(R^{D,L}))(Y,Z,W),\\
&(L_X(R^{D,L}))(Y,Z,W):=L_X(R^{D,L}(Y,Z,W))-R^{D,L}(L_XY,Z,W)\notag\\
&-(-1)^{|X||Y|}R^{D,L}(Y,L_XZ,W)-(-1)^{|X|(|Y|+|Z|)}R^{D,L}(Y,Z,L_XW),\notag
\end{align}
 is called the Lie derivative of the curvature $R^{D,L}$ along the operator $X\in D$.
 \end{defn}
\begin{thm}Let $D$ be integrable and $X,Y,Z,W\in D$, we have
\begin{align}
&(L_X(R^{D,L}))(Y,Z,W)=-(L_X\nabla^{D,L})([Y,Z],W)+(L_X\nabla^{D,L})(Y,\nabla^{D,L}_ZW)\\
&+(-1)^{|X||Y|}\nabla^{D,L}_Y[(L_X\nabla^{D,L})(Z,W)]-(-1)^{|Y||Z|}(L_X\nabla^{D,L})(Z,\nabla^{D,L}_YW)\notag\\
&-(-1)^{(|X|+|Y|)|Z|}\nabla^{D,L}_Z[(L_X\nabla^{D,L})(Y,W)].\notag
\end{align}
\end{thm}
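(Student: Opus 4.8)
The plan is to reduce the Lie-derivative identity (4.10) for the curvature $R^{D,L}$ to the two Lie-derivative identities for the connection $\nabla^{D,L}$ that are already encoded in Definition 4.1 and Proposition 4.2. The key observation is that, because $D$ is integrable, we have $[X,Y]^D=[X,Y]$ for all $X,Y\in D$ (the $D^\bot$-component of the bracket vanishes), so every projection can be dropped and the graded Jacobi identity (4.6) holds unobstructedly inside $D$. I will use the unprojected definition of the curvature together with the defining relation
\[
R^{D,L}(Y,Z)W=\nabla^{D,L}_Y\nabla^{D,L}_Z W-(-1)^{|Y||Z|}\nabla^{D,L}_Z\nabla^{D,L}_Y W-\nabla^{D,L}_{[Y,Z]}W,
\]
and I will abbreviate $L_X$ by its action $L_X Y=[X,Y]$ throughout.

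First I would expand the left-hand side $(L_X R^{D,L})(Y,Z,W)$ using Definition 4.4, substituting the three-term expression for $R^{D,L}$ into each of the four pieces $L_X(R^{D,L}(Y,Z,W))$, $R^{D,L}(L_XY,Z,W)$, $R^{D,L}(Y,L_XZ,W)$, $R^{D,L}(Y,Z,L_XW)$. This produces a (sign-weighted) sum of terms, each of the form $\nabla^{D,L}_a\nabla^{D,L}_b c$ with one slot carrying a bracket $[X,\cdot]$ applied by $L_X$ either outside or inside. The strategy is then to collect these terms by the pattern of which argument the Lie derivative hits, and to recognize each collected block as one of the combinations $(L_X\nabla^{D,L})(Y,\nabla^{D,L}_Z W)$, $\nabla^{D,L}_Y[(L_X\nabla^{D,L})(Z,W)]$, $(L_X\nabla^{D,L})([Y,Z],W)$, etc., by matching them term-by-term against the definition (4.1) of $L_X(\nabla^{D,L})$. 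Concretely, a term such as $L_X(\nabla^{D,L}_Y\nabla^{D,L}_Z W)$ is handled by twice applying the Leibniz-type rule for $L_X$ on a connection slot, peeling off one $(L_X\nabla^{D,L})$ factor and one residual $\nabla^{D,L}_Y L_X(\cdot)$ term.

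The main obstacle I expect is purely bookkeeping of the $\mathbb{Z}_2$-graded signs: each time $L_X$ is commuted past a $\nabla^{D,L}$ or inserted into a second connection slot, a factor $(-1)^{|X|(|Y|+\cdots)}$ appears, and these must be tracked so that the reassembled blocks carry exactly the signs $(-1)^{|X||Y|}$, $(-1)^{|Y||Z|}$, and $(-1)^{(|X|+|Y|)|Z|}$ displayed in (4.10). The only genuinely nontrivial cancellation, beyond sign arithmetic, is that the terms in which $L_X$ acts on the bracket argument, i.e.\ terms of shape $\nabla^{D,L}_{[X,[Y,Z]]}W$ and $(-1)^{|X||Y|}\nabla^{D,L}_{[Y,[X,Z]]}W$, must combine via the graded Jacobi identity (4.6) into $\nabla^{D,L}_{[[X,Y],Z]}W=\nabla^{D,L}_{[L_XY,Z]}W$; this is precisely where integrability of $D$ is indispensable, since it is what lets us write all brackets without $\pi^D$ and invoke (4.6) in the form already used in Corollary 4.3.

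To finish, I would verify that after these groupings every monomial of the expanded left-hand side has been absorbed into exactly one of the five blocks on the right-hand side of (4.10), with matching sign, leaving no remainder. Since the computation is entirely parallel to that of Proposition 4.2 (the same Leibniz-and-Jacobi mechanism, one derivative order higher), I would organize it as: (i) expand both sides into $\nabla\nabla$-monomials, (ii) match the ``outer'' terms where $L_X$ stays outside, (iii) match the ``inner-slot'' terms using (4.1), (iv) resolve the bracket-slot terms via (4.6), and (v) compare signs. The identity (4.10) then follows by direct comparison.
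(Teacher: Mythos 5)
Your proposal follows essentially the same route as the paper's own proof: expand the four terms of the definition of $L_X(R^{D,L})$ via the three-term curvature formula (using integrability to drop the projections and the extra bracket term), regroup the resulting $\nabla\nabla$-monomials against the definition of $L_X(\nabla^{D,L})$, and absorb the bracket-slot terms $\nabla^{D,L}_{[[X,Y],Z]}W+(-1)^{|X||Y|}\nabla^{D,L}_{[Y,[X,Z]]}W$ into $\nabla^{D,L}_{[X,[Y,Z]]}W$ by the graded Jacobi identity, which is exactly the content of the paper's displays (4.10) and (4.11). The plan is correct and the key cancellation is correctly identified; only the explicit sign bookkeeping remains to be written out.
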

\begin{proof}
Since $D$ is integrable, by the definition of $R^{D,L}$, we have
\begin{align}
&L_X(R^{D,L}(Y,Z,W))=L_X(\nabla^{D,L}_Y\nabla^{D,L}_ZW)-(-1)^{|Y||Z|}L_X(\nabla^{D,L}_Z\nabla^{D,L}_YW)-L_X(\nabla^{D,L}_{[Y,Z]}W);\\
&R^{D,L}(L_XY,Z,W)=\nabla^{D,L}_{[X,Y]}\nabla^{D,L}_ZU-(-1)^{(|X|+|Y|)|Z|}\nabla^{D,L}_Z\nabla^{D,L}_{[X,Y]}W-\nabla^{D,L}_{[[X,Y],Z]}W;\notag\\
&R^{D,L}(Y,L_XZ,W)=\nabla^{D,L}_Y\nabla^{D,L}_{[X,Z]}W-(-1)^{|Y|(|X|+|Z|)}\nabla^{D,L}_{[X,Z]}\nabla^{D,L}_YW-\nabla^{D,L}_{[Y,[X,Z]]}W;\notag\\
&R^{D,L}(Y,Z,L_XW)=\nabla^{D,L}_Y\nabla^{D,L}_Z(L_XW)-(-1)^{|Y||Z|}\nabla^{D,L}_Z\nabla^{D,L}_Y(L_XW)-\nabla^{D,L}_{[Y,Z]}(L_XW).\notag
\end{align}
We also have
\begin{align}
&-L_X(\nabla^{D,L}_{[Y,Z]}W)+\nabla^{D,L}_{[[X,Y],Z]}W+(-1)^{|X||Y|}\nabla^{D,L}_{[Y,[X,Z]]}W\\
&+(-1)^{(|X|+|Y|)|Z|}\nabla^{D,L}_{[Y,Z]}(L_XW)=-(L_X\nabla^{D,L})([Y,Z],U),\notag\\
&L_X(\nabla^{D,L}_Y\nabla^{D,L}_ZW)-\nabla^{D,L}_{[X,Y]}\nabla^{D,L}_ZU\notag\\
&-(-1)^{|X||Y|}\nabla^{D,L}_Y\nabla^{D,L}_{[X,Z]}W
-(-1)^{(|X|+|Y|)|Z|}\nabla^{D,L}_Y\nabla^{D,L}_Z(L_XW)\notag\\
&=(L_X\nabla^{D,L})(Y,\nabla^{D,L}_ZW)
+(-1)^{|X||Y|}\nabla^{D,L}_Y[(L_X\nabla^{D,L})(Z,W)],\notag\\
&-(-1)^{|Y||Z|}L_X(\nabla^{D,L}_Z\nabla^{D,L}_YW)+(-1)^{(|X|+|Y|)|Z|}\nabla^{D,L}_Z\nabla^{D,L}_{[X,Y]}W\notag\\
&
+(-1)^{|X||Y|}(-1)^{|Y|(|X|+|Z|)}\nabla^{D,L}_{[X,Z]}\nabla^{D,L}_YW+(-1)^{(|X|+|Y|)|Z|}(-1)^{|Y||Z|}\nabla^{D,L}_Z\nabla^{D,L}_Y(L_XW)\notag\\
&
=-(-1)^{|Y||Z|}(L_X\nabla^{D,L})(Z,\nabla^{D,L}_YW)
-(-1)^{(|X|+|Y|)|Z|}\nabla^{D,L}_Z[(L_X\nabla^{D,L})(Y,W)].\notag
\end{align}
By (4.10) and (4.11), we get (4.9).
\end{proof}
\begin{defn}Suppose $X\in D$ and $L^\bot_X:D^\bot\rightarrow D^\bot;~~N\mapsto [X,N]^{D^\bot}$. The mapping
 \begin{align}
&L^\bot_X(\nabla^{\bot}):D\times D^\bot\rightarrow D^\bot,\\
&L^\bot_X(\nabla^{\bot})(Y,N):=L^\bot_X(\nabla^{\bot}_YN)-\nabla^{\bot}_{[X,Y]^D}N-(-1)^{|X||Y|}\nabla^{\bot}_Y([X,N]^{D^\bot}),\notag
\end{align}
 is called the Lie derivative of the normal connection $\nabla^{\bot}$ along the operator $X\in D$. Then
 \begin{align}
&L^\bot_X(\nabla^{\bot})(\alpha Y,N)=(-1)^{|X||\alpha|}\alpha L^\bot_X(\nabla^{\bot})(Y,N),\\
&L^\bot_X(\nabla^{\bot})(Y,\alpha N)=[X,Y]^{D^\bot}(\alpha)N+(-1)^{(|X|+|Y|)|\alpha|}\alpha L^\bot_X(\nabla^{\bot})(Y,N).\notag
\end{align}
 \end{defn}
 Similar to Proposition 4.2, we have
 \begin{prop}For $X,Y,Z\in D$ and $N\in D^\bot$, then we have
  \begin{align}
&[L^\bot_X,L^\bot_Y]({\nabla}^{\bot})(Z,N):=[L^\bot_X,L^\bot_Y]({\nabla}^{\bot}_ZN)+(-1)^{|X||Y|}{\nabla}^{\bot}_{[Y,[X,Z]^D]^D}N\\
&+(-1)^{|X||Y|+|X||Z|+|Y||Z|}{\nabla}^{\bot}_Z(L^\bot_YL^\bot_XN)-{\nabla}^{\bot}_{[X,[Y,Z]^D]^D}N\notag\\
&-(-1)^{(|X|+|Y|)|Z|}{\nabla}^{\bot}_Z(L^\bot_XL^\bot_YN).\notag
\end{align}
In particular, when $D$ is integrable, we have
\begin{align}
&[L^\bot_X,L^\bot_Y]({\nabla}^{\bot})=L^\bot_{[X,Y]}({\nabla}^{\bot}).
\end{align}
 \end{prop}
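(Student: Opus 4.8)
The plan is to mirror the proof of Proposition 4.2 and Corollary 4.3 line for line, with the bilinear map $\nabla^{\bot}:D\times D^\bot\to D^\bot$ playing the role of $\nabla^{D,L}$ and with the operator acting in the second slot through $L^\bot_X=[X,\cdot]^{D^\bot}$ rather than through $[X,\cdot]^D$. I would carry this out in three steps: first expand the iterated Lie derivative $L^\bot_X(L^\bot_Y(\nabla^\bot))$, then antisymmetrize in $X,Y$ to form the graded commutator and obtain (4.14), and finally specialize to the integrable case to recover (4.15).

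For the first step I would regard $L^\bot_Y(\nabla^\bot)$ as itself a bilinear map $D\times D^\bot\to D^\bot$ and apply the Lie-derivative rule of the same shape as (4.12), remembering that in the first (the $D$-) slot the operator acts through $[X,\cdot]^D$ while in the second (the $D^\bot$-) slot it acts through $L^\bot_X$:
\begin{align}
L^\bot_X(L^\bot_Y(\nabla^\bot))(Z,N)&=L^\bot_X\big[(L^\bot_Y\nabla^\bot)(Z,N)\big]-(L^\bot_Y\nabla^\bot)([X,Z]^D,N)\notag\\
&\quad-(-1)^{|X||Z|}(L^\bot_Y\nabla^\bot)(Z,L^\bot_XN).\notag
\end{align}
Substituting (4.12) for $L^\bot_Y(\nabla^\bot)$ into each term and expanding produces the exact analog of (4.4): the leading term $L^\bot_XL^\bot_Y(\nabla^\bot_ZN)$, four \emph{mixed} terms in which a single $L^\bot$ survives on $\nabla^\bot$ and one bracket has been absorbed into a slot, and four \emph{doubly bracketed} terms carrying $[Y,[X,Z]^D]^D$, $L^\bot_YL^\bot_XN$ and their partners.

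The second step is to form $[L^\bot_X,L^\bot_Y](\nabla^\bot)=L^\bot_X(L^\bot_Y(\nabla^\bot))-(-1)^{|X||Y|}L^\bot_Y(L^\bot_X(\nabla^\bot))$. Exactly as in the passage from (4.4) to (4.3), I expect that after multiplying the $X\leftrightarrow Y$ swapped expression by $(-1)^{|X||Y|}$ the four mixed terms cancel pairwise against their swapped counterparts, the leading term collapses to $[L^\bot_X,L^\bot_Y](\nabla^\bot_ZN)$, and the doubly bracketed terms survive to give precisely the right-hand side of (4.14).

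For the integrable case I would first record the sublemma that $[L^\bot_X,L^\bot_Y]=L^\bot_{[X,Y]}$ as operators on $D^\bot$: since $[Y,N]^D\in D$ and $[X,[Y,N]^D]\in D$ by integrability, its $D^\bot$-part vanishes and hence $[X,[Y,N]]^{D^\bot}=[X,[Y,N]^{D^\bot}]^{D^\bot}$; feeding this and its partner into the graded Jacobi identity (4.6) yields $[X,[Y,N]^{D^\bot}]^{D^\bot}-(-1)^{|X||Y|}[Y,[X,N]^{D^\bot}]^{D^\bot}=[[X,Y],N]^{D^\bot}$. Using this identity, together with (4.6) to combine the two $Z$-subscript double-bracket terms in (4.14) into $-\nabla^\bot_{[[X,Y],Z]}N$ and to combine the two terms consisting of $\nabla^\bot_Z$ of a double bracket in $N$ into $-(-1)^{(|X|+|Y|)|Z|}\nabla^\bot_Z(L^\bot_{[X,Y]}N)$, one reads off that the right-hand side of (4.14) is exactly $L^\bot_{[X,Y]}(\nabla^\bot)(Z,N)$ as given by (4.12), which is (4.15). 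The only real obstacle is the sign bookkeeping: because the two slots of $\nabla^\bot$ carry different $\mathbb{Z}_2$-degrees and $L^\bot_X$ acts by different rules on $D$ and on $D^\bot$, the factors $(-1)^{|X||Y|}$, $(-1)^{|X||Z|}$ and $(-1)^{|Y||Z|}$ must be tracked carefully so that the mixed terms cancel in the commutator.
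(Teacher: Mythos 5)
Your plan is exactly what the paper intends: it gives no separate proof of this proposition, saying only that it is ``similar to Proposition 4.2'', so mirroring the proof of Proposition 4.2 and Corollary 4.3 with $\nabla^{\bot}$ in place of $\nabla^{D,L}$, $[X,\cdot]^{D}$ acting in the first slot and $L^{\bot}_X=[X,\cdot]^{D^{\bot}}$ in the second, is the intended route, and your treatment of the integrable case (the sublemma $[L^{\bot}_X,L^{\bot}_Y]=L^{\bot}_{[X,Y]}$ on $D^{\bot}$ via the graded Jacobi identity, then recombining the surviving double-bracket terms) is correct and matches Corollary 4.3. One concrete correction to the only formula you actually commit to: the second and third terms of your iteration rule must each carry an extra factor $(-1)^{|X||Y|}$, i.e.
\begin{align*}
L^{\bot}_X(L^{\bot}_Y(\nabla^{\bot}))(Z,N)&=L^{\bot}_X\bigl[(L^{\bot}_Y\nabla^{\bot})(Z,N)\bigr]-(-1)^{|X||Y|}(L^{\bot}_Y\nabla^{\bot})([X,Z]^D,N)\\
&\quad-(-1)^{|X|(|Y|+|Z|)}(L^{\bot}_Y\nabla^{\bot})(Z,L^{\bot}_XN),
\end{align*}
because $X$ must pass the degree-$|Y|$ map $L^{\bot}_Y(\nabla^{\bot})$ before reaching either argument; this is the convention implicit in (4.4), whose fourth and fifth terms are $-(-1)^{|X||Y|}L_Y(\nabla^{D,L}_{[X,Z]^D}W)$ and $-(-1)^{|X|(|Y|+|Z|)}L_Y(\nabla^{D,L}_Z([X,W]^D))$. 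With the signs as you wrote them the four mixed terms do not cancel against their $X\leftrightarrow Y$ partners in the graded commutator (they differ by $(-1)^{|X||Y|}$), and the surviving double-bracket terms come out with prefactors inconsistent with (4.14). With the corrected factors the computation closes exactly as you describe, and your verification of (4.15) goes through unchanged.
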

\begin{defn}Suppose $X,Y,Z\in D$, $N\in D^\bot$ and $D$ is integrable. The mapping
 \begin{align}
&L^\bot_X(R^{\bot}):D\times D\times D^\bot\rightarrow D^\bot; (Y,Z,N)\rightarrow (L^\bot_X(R^{\bot}))(Y,Z,N),\\
&(L^\bot_X(R^{\bot}))(Y,Z,N):=L^\bot_X(R^{\bot}(Y,Z)N)-R^{\bot}(L_XY,Z,N)\notag\\
&-(-1)^{|X||Y|}R^{\bot}(Y,L_XZ,N)-(-1)^{|X|(|Y|+|Z|)}R^{\bot}(Y,Z,L^\bot_XN),\notag
\end{align}
 is called the Lie derivative of the normal curvature $R^{\bot}$ along the operator $X\in D$.
 \end{defn}
 Similar to Theorem 4.5, we have
\begin{thm}Let $D$ be integrable and $X,Y,Z\in D$, $N\in D^\bot$, we have
\begin{align}
&(L^\bot_X(R^{\bot}))(Y,Z,N):=-(L^\bot_X\nabla^{\bot})([Y,Z],N)+(L^\bot_X\nabla^{\bot})(Y,\nabla^{\bot}_ZN)\\
&+(-1)^{|X||Y|}\nabla^{\bot}_Y[(L^\bot_X\nabla^{\bot})(Z,N)]-(-1)^{|Y||Z|}(L^\bot_X\nabla^{\bot})(Z,\nabla^{\bot}_YN)\notag\\
&-(-1)^{(|X|+|Y|)|Z|}\nabla^{\bot}_Z[(L^\bot_X\nabla^{\bot})(Y,N)].\notag
\end{align}
\end{thm}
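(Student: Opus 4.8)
The plan is to imitate the proof of Theorem 4.5 almost verbatim, replacing the partial connection $\nabla^{D,L}$ by the normal connection $\nabla^\bot$, the curvature $R^{D,L}$ by the normal curvature $R^\bot$, the $D$-valued argument $W$ by the $D^\bot$-valued argument $N$, and the operator $L_X$ acting on that last slot by $L^\bot_X$. The substitution is legitimate precisely because $D$ is integrable: since $[X,Y]^{D^\bot}=0$ for all $X,Y\in D$, the extra term $-\pi^{D^{\bot}}\widetilde{\nabla}_{[X,Y]^{\bot}}\xi$ in the definition (3.29) of $\widetilde{R}^{L^\bot}$ drops out, so that the normal curvature reduces to
$$R^\bot(Y,Z)N=\nabla^\bot_Y\nabla^\bot_ZN-(-1)^{|Y||Z|}\nabla^\bot_Z\nabla^\bot_YN-\nabla^\bot_{[Y,Z]}N,$$
which has exactly the same shape as the formula for $R^{D,L}(Y,Z,W)$ used at the start of the proof of Theorem 4.5.

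First I would expand the four terms on the right-hand side of the definition (4.17) of $(L^\bot_X(R^\bot))(Y,Z,N)$, using this formula for $R^\bot$ together with $L_XY=[X,Y]^D$, $L_XZ=[X,Z]^D$ and $L^\bot_XN=[X,N]^{D^\bot}$. This produces the exact analogue of the four displayed lines in (4.10), now with $\nabla^\bot$ in place of $\nabla^{D,L}$ and with $N$ occupying the last slot. Because $D$ is integrable, every bracket of two elements of $D$ again lies in $D$, so the inner brackets $[X,Y]$, $[X,Z]$, $[[X,Y],Z]$ and $[Y,[X,Z]]$ coincide with their $D$-components and may be manipulated freely.

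Next I would regroup the resulting terms into three blocks exactly as in (4.11). The bracket-only terms collapse to $-(L^\bot_X\nabla^\bot)([Y,Z],N)$ once the graded Jacobi identity (4.6) is used to rewrite $[[X,Y],Z]=[X,[Y,Z]]-(-1)^{|X||Y|}[Y,[X,Z]]$; the second block assembles into $(L^\bot_X\nabla^\bot)(Y,\nabla^\bot_ZN)+(-1)^{|X||Y|}\nabla^\bot_Y[(L^\bot_X\nabla^\bot)(Z,N)]$, and the third into $-(-1)^{|Y||Z|}(L^\bot_X\nabla^\bot)(Z,\nabla^\bot_YN)-(-1)^{(|X|+|Y|)|Z|}\nabla^\bot_Z[(L^\bot_X\nabla^\bot)(Y,N)]$, by direct appeal to the definition (4.12) of $L^\bot_X(\nabla^\bot)$. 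Adding the three blocks gives (4.18).

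The only genuine obstacle is bookkeeping of the Koszul-type sign factors. The last slot now carries a $D^\bot$-valued entry acted on by $L^\bot_X=[X,\,\cdot\,]^{D^\bot}$ rather than by $L_X$, and one must verify that the parity carried by $[X,N]^{D^\bot}$ inside $\nabla^\bot$ is matched by the prefactor $(-1)^{|X|(|Y|+|Z|)}$ attached to the $R^\bot(Y,Z,L^\bot_XN)$ term in (4.17). Since $L^\bot_X$ and $L_X$ both have degree $|X|$, all of these signs are identical to the corresponding ones in Theorem 4.5, so no new sign phenomenon arises and the computation is word-for-word parallel.
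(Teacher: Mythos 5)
Your proposal is correct and is exactly the argument the paper intends: the paper offers no separate proof of this theorem, stating only ``Similar to Theorem 4.5, we have,'' and your mutatis mutandis transfer --- replacing $\nabla^{D,L}$ by $\nabla^\bot$, $W$ by $N$, and $L_X$ on the last slot by $L^\bot_X$, with integrability of $D$ killing the extra term $-\pi^{D^{\bot}}\widetilde{\nabla}_{[X,Y]^{\bot}}\xi$ so that $R^\bot$ takes the standard three-term form --- is precisely what that phrase asks the reader to supply. Your sign bookkeeping and the observation that $L^\bot_X$ has the same degree $|X|$ as $L_X$ are also correct, so no new issues arise.
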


\section{Several other affine connections on the algebra of differential forms}
 \indent In this section, we define several other affine connections on the algebra of differential forms.
 Firstly, we define the canonical connection.
Let $(M,\overline{X_1},\cdots,\overline{X_m})$ be a parallelizable manifold (see the definition 1.1 in \cite{YE}). Then
${\rm Der}(\Omega(M))=\{L_{\overline{X_1}},\cdots,L_{\overline{X_m}},i_{\overline{X_1}},\cdots,i_{\overline{X_m}}\}$.
\begin{prop}On a parallelizable manifold $(M,\overline{X_1},\cdots,\overline{X_m},{\rm Der}(\Omega(M)))$, there exists a unique linear connection
$\nabla^c$ for which satisfies $\nabla^c_X(L_{\overline{X_j}})=\nabla^c_X(i_{\overline{X_j}})=0$ for $1\leq j\leq m$ and
$X\in {\rm Der}(\Omega(M))$.
\end{prop}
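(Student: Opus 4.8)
The plan is to exploit the global frame supplied by parallelizability to reduce the problem to prescribing the connection on a free-module basis, and then to recover $\nabla^c$ everywhere by forcing the Leibniz rule. Since $(M,\overline{X_1},\dots,\overline{X_m})$ is parallelizable, the set $\{L_{\overline{X_1}},\dots,L_{\overline{X_m}},i_{\overline{X_1}},\dots,i_{\overline{X_m}}\}$ is a \emph{global} frame for ${\rm Der}\Omega(M)$ as a free left $\Omega(M)$-module (this is the global version of the local statement recalled just after Definition 2.1, coming from \cite{FN}). Thus every homogeneous $Y\in{\rm Der}\Omega(M)$ has a unique expansion with (left) homogeneous coefficients $\gamma^j,\delta^j\in\Omega(M)$,
$$Y=\sum_{j=1}^m \gamma^j L_{\overline{X_j}}+\sum_{j=1}^m \delta^j i_{\overline{X_j}},$$
where $|\gamma^j|=|Y|$ and $|\delta^j|=|Y|+1$ (mod $2$), since $|L_{\overline{X}}|=0$ and $|i_{\overline{X}}|=1$.

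For uniqueness, I would observe that any connection satisfying $\nabla^c_X(L_{\overline{X_j}})=\nabla^c_X(i_{\overline{X_j}})=0$ is completely determined. Applying the Leibniz rule (property 3 of Definition 2.2) termwise to the expansion of $Y$ and using that the frame fields are parallel kills every term of the form $\alpha\,\nabla^c_X(\cdot)$, leaving the forced formula
$$\nabla^c_X Y=\sum_{j=1}^m X(\gamma^j)\,L_{\overline{X_j}}+\sum_{j=1}^m X(\delta^j)\,i_{\overline{X_j}}.$$
Hence at most one such connection can exist.

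For existence, I would \emph{define} $\nabla^c$ by the displayed formula and verify the three axioms of Definition 2.2 together with the $\mathbb{Z}_2$-degree preserving requirement. Additivity in both slots is immediate, since $X\mapsto X(\gamma^j)$ is additive and the frame coefficients are additive in $Y$. The $\Omega(M)$-linearity in the first slot follows from $(\alpha X)(\gamma^j)=\alpha\,X(\gamma^j)$, giving $\nabla^c_{\alpha X}Y=\alpha\nabla^c_X Y$. The Leibniz rule is checked by expanding $\alpha Y$ in the frame, whose left coefficients are $\alpha\gamma^j$ and $\alpha\delta^j$; applying $X$ as a derivation, $X(\alpha\gamma^j)=X(\alpha)\gamma^j+(-1)^{|X||\alpha|}\alpha\,X(\gamma^j)$, and reassembling reproduces exactly $X(\alpha)Y+(-1)^{|X||\alpha|}\alpha\nabla^c_X Y$. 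Finally, degree-preservation follows from the bookkeeping above: $X(\gamma^j)L_{\overline{X_j}}$ has degree $|X|+|Y|$, while $X(\delta^j)i_{\overline{X_j}}$ has degree $|X|+(|Y|+1)+1=|X|+|Y|$ (mod $2$).

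The step I expect to be the main obstacle is the graded sign bookkeeping: one must track the $(-1)^{|X||\alpha|}$ factors carefully when expanding $\alpha Y$ and when moving $X$ past coefficients, and confirm that the degrees of the homogeneous coefficients $\gamma^j,\delta^j$ are exactly those forced by $|L_{\overline{X}}|=0$ and $|i_{\overline{X}}|=1$, so that the resulting object is genuinely degree-preserving. Everything else is a routine consequence of the free-module structure furnished by parallelizability.
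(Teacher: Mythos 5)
Your proof is correct and follows exactly the paper's own argument: expand $Y$ in the global frame $\{L_{\overline{X_j}},i_{\overline{X_j}}\}$, use the Leibniz rule to force $\nabla^c_XY=\sum_j X(\gamma^j)L_{\overline{X_j}}+\sum_j X(\delta^j)i_{\overline{X_j}}$ for uniqueness, and take this formula as the definition for existence. You supply more detail than the paper (the degree bookkeeping and the explicit verification of the connection axioms), but the route is the same.
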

\begin{proof}Firstly we prove the uniqueness. Assume that $\nabla$ is a linear connection satisfies
$\nabla^c_X(L_{\overline{X_j}})=\nabla^c_X(i_{\overline{X_j}})=0$. Let $Y=\sum_{j=1}^m\omega_j L_{\overline{X_j}}+
\sum_{j=1}^m\omega'_j i_{\overline{X_j}}$ for $\omega_j,\omega'_j\in \Omega(M)$. Then the connection $\nabla$ is uniquely determined by
 \begin{align}
&\nabla_XY=\sum_{j=1}^mX(\omega_j) L_{\overline{X_j}}+
\sum_{j=1}^mX(\omega'_j) i_{\overline{X_j}}.
\end{align}
To prove the existence, let $\nabla^c:{\rm Der}(\Omega(M))\times {\rm Der}(\Omega(M))\rightarrow {\rm Der}(\Omega(M))$ be defined by (5.1). We may
show that $\nabla^c$ is a linear connection.
\end{proof}
\begin{prop}The torsion tensor $T^c$ of the canonical connection is given by
\begin{align}
&T^c(L_{\overline{X_j}},L_{\overline{X_l}})=-L_{[\overline{X_j},{\overline{X_l}}]};~~
T^c(L_{\overline{X_j}},i_{\overline{X_l}})=-i_{[\overline{X_j},{\overline{X_l}}]};\notag\\
&
T^c(i_{\overline{X_j}},L_{\overline{X_l}})=i_{[\overline{X_l},{\overline{X_j}}]};~~
T^c(i_{\overline{X_j}},i_{\overline{X_l}})=0.
\end{align}
The canonical connection is flat. The canonical connection is a metric connection about $G_g$ where $g$ is a metric on $M$ such that
$\{\overline{X_1},\cdots,\overline{X_m}\}$ is an orthonormal basis about $g$.
\end{prop}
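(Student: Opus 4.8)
The plan is to prove the three assertions in turn, in each case exploiting that $\nabla^c$ parallelizes the global frame $\{L_{\overline{X_j}},i_{\overline{X_j}}\}_{j=1}^m$, i.e.\ $\nabla^c_X(L_{\overline{X_j}})=\nabla^c_X(i_{\overline{X_j}})=0$ for all $X$ and all $j$. For the torsion I would simply feed frame elements into the definition of the torsion tensor (Definition 2.3), $T^c(A,B)=\nabla^c_A B-(-1)^{|A||B|}\nabla^c_B A-[A,B]$. Since both covariant-derivative terms vanish on frame elements, this collapses to $T^c(A,B)=-[A,B]$, and it remains only to substitute the classical graded commutators of derivations of $\Omega(M)$ (the Fr\"olicher--Nijenhuis brackets, cf.\ \cite{FN}), namely $[L_{\overline X},L_{\overline Y}]=L_{[\overline X,\overline Y]}$, $[L_{\overline X},i_{\overline Y}]=i_{[\overline X,\overline Y]}$, and $[i_{\overline X},i_{\overline Y}]=0$.

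The one point needing care is the sign in the mixed bracket. Because $L$ is even and $i$ is odd, the graded commutator of the pair $(i_{\overline{X_j}},L_{\overline{X_l}})$ is ordinarily antisymmetric, so $[i_{\overline{X_j}},L_{\overline{X_l}}]=-[L_{\overline{X_l}},i_{\overline{X_j}}]=-i_{[\overline{X_l},\overline{X_j}]}$; this is exactly what produces the orientation reversal $T^c(i_{\overline{X_j}},L_{\overline{X_l}})=i_{[\overline{X_l},\overline{X_j}]}$ in the third formula. The vanishing of $[i_{\overline X},i_{\overline Y}]$ follows since two odd interior products anticommute, giving $T^c(i_{\overline{X_j}},i_{\overline{X_l}})=0$. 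The remaining two formulas are immediate from the unsigned brackets above.

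For flatness, the guiding observation is that the curvature $R^c$ of Definition 2.6 is $\Omega(M)$-tensorial in each of its three arguments, so it suffices to evaluate it on frame elements. But for any frame element $Z$ one has $\nabla^c_Y Z=0$, hence $\nabla^c_X\nabla^c_Y Z=0$, likewise $\nabla^c_Y\nabla^c_X Z=0$, and $\nabla^c_{[X,Y]}Z=0$; therefore $R^c(X,Y)Z=0$ identically, i.e.\ $\nabla^c$ is flat. For metric compatibility about $G_g$ with $\{\overline{X_j}\}$ orthonormal, I would first record that the compatibility defect $Q(X,Y,Z):=X\langle Y,Z;G_g\rangle-\langle\nabla^c_X Y,Z;G_g\rangle-(-1)^{|X||Y|}\langle Y,\nabla^c_X Z;G_g\rangle$ is graded $\Omega(M)$-linear in every slot: a short computation using the graded Leibniz rule of Definition 2.2 and properties (1)--(2) of Definition 2.1 shows the $X(\alpha)$ contributions cancel, so $Q(X,\alpha Y,Z)=(-1)^{|X||\alpha|}\alpha Q(X,Y,Z)$ and similarly in the other arguments. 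Hence it suffices to check $Q=0$ on frame elements, where by (2.3) the pairing $\langle Y,Z;G_g\rangle$ equals one of the constants $0$ or $\delta_{kl}$; every derivation annihilates a constant form, so $X\langle Y,Z;G_g\rangle=0$, while $\nabla^c_X Y=\nabla^c_X Z=0$ kills the other two terms. Thus $Q\equiv 0$ and $\nabla^c$ is metric.

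The main obstacle I anticipate is purely the graded-sign bookkeeping: pinning down the antisymmetry convention for the $i$--$L$ bracket so that the reversed commutator $[\overline{X_l},\overline{X_j}]$ emerges in the third torsion identity, and confirming that the Leibniz-type terms genuinely cancel in the tensoriality checks for both $R^c$ and $Q$. Once those reductions to the parallel frame are justified, every remaining computation is trivial, since $\nabla^c$ kills the frame and the orthonormal metric components are constants.
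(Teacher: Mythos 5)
Your argument is correct, and since the paper states this proposition without any proof, your reduction to the parallelized frame (torsion collapses to $-[A,B]$, curvature and the metric-compatibility defect are $\Omega(M)$-tensorial and vanish on frame elements whose pairings are constants) is exactly the intended, natural verification. The sign bookkeeping for $[i_{\overline{X_j}},L_{\overline{X_l}}]=-[L_{\overline{X_l}},i_{\overline{X_j}}]=-i_{[\overline{X_l},\overline{X_j}]}$ is handled correctly and reproduces the stated third identity.
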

\begin{defn}
The dual connection $\widetilde{\nabla}$ is defined by $\widetilde{\nabla}_XY:=(-1)^{|X||Y|}\nabla^c_YX+[X,Y]$. Then
$\widetilde{T}(X,Y)=-T^c(X,Y).$ Let $\nabla^\lambda_XY=(1-\lambda)\nabla^c_XY+\lambda\widetilde{\nabla}_XY$ for $\lambda$ is a constant.
\end{defn}
\begin{lem}
The following equalities hold
\begin{align}
&\nabla^\lambda_{L_{\overline{X_j}}}L_{\overline{X_l}}=\lambda L_{[\overline{X_j},{\overline{X_l}}]};~~
\nabla^\lambda_{L_{\overline{X_j}}}i_{\overline{X_l}}=\lambda i_{[\overline{X_j},{\overline{X_l}}]};~~\\
&\nabla^\lambda_{i_{\overline{X_j}}}L_{\overline{X_l}}=\lambda i_{[\overline{X_j},{\overline{X_l}}]};~~
\nabla^\lambda_{i_{\overline{X_j}}}i_{\overline{X_l}}=0.\notag
\end{align}
\end{lem}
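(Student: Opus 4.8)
The plan is to reduce everything to the single observation that, on the given global frame, the two building-block connections degenerate and $\nabla^\lambda$ collapses to a multiple of the graded bracket. Recall from Proposition 5.1 that the canonical connection $\nabla^c$ annihilates every frame derivation, i.e. $\nabla^c_X(L_{\overline{X_j}})=\nabla^c_X(i_{\overline{X_j}})=0$ for all $X\in{\rm Der}(\Omega(M))$ and all $j$. Consequently, whenever both arguments are frame derivations $E,F\in\{L_{\overline{X_j}},i_{\overline{X_j}}\}$, we have $\nabla^c_EF=0$ and also $\nabla^c_FE=0$. Feeding this into the definition of the dual connection in Definition 5.3 gives $\widetilde{\nabla}_EF=(-1)^{|E||F|}\nabla^c_FE+[E,F]=[E,F]$, and therefore
$$\nabla^\lambda_EF=(1-\lambda)\nabla^c_EF+\lambda\widetilde{\nabla}_EF=\lambda[E,F].$$
So the entire lemma is equivalent to computing the four graded brackets $[L_{\overline{X_j}},L_{\overline{X_l}}]$, $[L_{\overline{X_j}},i_{\overline{X_l}}]$, $[i_{\overline{X_j}},L_{\overline{X_l}}]$, and $[i_{\overline{X_j}},i_{\overline{X_l}}]$.

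Next I would invoke the standard graded commutation relations for the Lie-derivative and interior-product derivations of $\Omega(M)$ (the Fr\"olicher--Nijenhuis relations, cf. \cite{FN}), namely $[L_{\overline{X}},L_{\overline{Y}}]=L_{[\overline{X},\overline{Y}]}$, $[L_{\overline{X}},i_{\overline{Y}}]=i_{[\overline{X},\overline{Y}]}$, and $[i_{\overline{X}},i_{\overline{Y}}]=0$, where the bracket on the left is the graded commutator $[A,B]=AB-(-1)^{|A||B|}BA$ on ${\rm Der}\Omega(M)$ and the bracket on the right is the ordinary Lie bracket of vector fields. Since $|L_{\overline{X}}|=0$ and $|i_{\overline{X}}|=1$, the fourth bracket $[i_{\overline{X_j}},i_{\overline{X_l}}]=i_{\overline{X_j}}i_{\overline{X_l}}+i_{\overline{X_l}}i_{\overline{X_j}}$ is the anticommutator of interior products, which vanishes identically. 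For the mixed bracket with the arguments in the opposite order I would use graded antisymmetry, $[i_{\overline{X_j}},L_{\overline{X_l}}]=-(-1)^{|i||L|}[L_{\overline{X_l}},i_{\overline{X_j}}]=-i_{[\overline{X_l},\overline{X_j}]}=i_{[\overline{X_j},\overline{X_l}]}$.

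Substituting these four brackets into $\nabla^\lambda_EF=\lambda[E,F]$ then yields exactly the four claimed identities. The only place that requires genuine care is bookkeeping the $\mathbb{Z}_2$-signs: one must track that $L$ is even and $i$ is odd throughout, so that the $(-1)^{|E||F|}$ factor in the dual connection is harmless (it always multiplies the vanishing term $\nabla^c_FE$), that $[i_{\overline{X_j}},i_{\overline{X_l}}]$ is an \emph{anti}commutator rather than a commutator, and that the reordering of $[i_{\overline{X_j}},L_{\overline{X_l}}]$ introduces the sign that ultimately cancels the sign hidden in $[\overline{X_l},\overline{X_j}]=-[\overline{X_j},\overline{X_l}]$. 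With the frame being global on the parallelizable manifold, no coordinate patching or partition-of-unity argument is needed; the computation is purely algebraic.
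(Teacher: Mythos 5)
Your proposal is correct: since $\nabla^c$ annihilates all frame derivations (Proposition 5.1), Definition 5.3 immediately gives $\nabla^\lambda_EF=\lambda[E,F]$ on frame elements, and the four Fr\"olicher--Nijenhuis brackets (with the graded-antisymmetry sign handled correctly for $[i_{\overline{X_j}},L_{\overline{X_l}}]$) yield exactly the stated identities. The paper omits the proof of this lemma, and your computation is precisely the direct argument it evidently intends.
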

By Lemma 5.4, we have
\begin{lem}
The following equalities hold
\begin{align}
&R^\lambda(L_{\overline{X_j}},L_{\overline{X_k}})L_{\overline{X_l}}=(\lambda^2-\lambda)L_{[[\overline{X_j},\overline{X_k}],\overline{X_l}]}
+(\lambda-\lambda^2)L_{\Sigma_\mu[\overline{X_j}(C^\mu_{kl})-\overline{X_k}(C^\mu_{jl})]\overline{X_\mu}}-\lambda L_{\Sigma_\mu
\overline{X_l}(C^\mu_{jk})
\overline{X_\mu}};\\
&R^\lambda(L_{\overline{X_j}},L_{\overline{X_k}})i_{\overline{X_l}}=
(\lambda-\lambda^2)i_{\Sigma_\mu[\overline{X_j}(C^\mu_{kl})-\overline{X_k}(C^\mu_{jl})]\overline{X_\mu}}
+(\lambda^2-\lambda)i_{[[\overline{X_j},\overline{X_k}],\overline{X_l}]}
-\lambda i_{\Sigma_\mu
\overline{X_l}(C^\mu_{jk})
\overline{X_\mu}};\notag\\
&R^\lambda(L_{\overline{X_j}},i_{\overline{X_k}})L_{\overline{X_l}}=(\lambda^2-\lambda)i_{[[\overline{X_j},\overline{X_k}],\overline{X_l}]}
+(\lambda-\lambda^2)i_{\Sigma_\mu[\overline{X_j}(C^\mu_{kl})-\overline{X_k}(C^\mu_{jl})]\overline{X_\mu}}
-\lambda i_{\Sigma_\mu
\overline{X_l}(C^\mu_{jk})
\overline{X_\mu}};\notag\\
&R^\lambda(L_{\overline{X_j}},i_{\overline{X_k}})i_{\overline{X_l}}=0;~~
R^\lambda(i_{\overline{X_j}},i_{\overline{X_k}})L_{\overline{X_l}}=0;~~
R^\lambda(i_{\overline{X_j}},i_{\overline{X_k}})i_{\overline{X_l}}=0,\notag
\end{align}
where $[\overline{X_k},\overline{X_l}]=\Sigma_{\mu=1}^mC^\mu_{kl}\overline{X_\mu}$.
\end{lem}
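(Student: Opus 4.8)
The plan is to compute the curvature tensor $R^\lambda$ directly from its definition
$$R^\lambda(X,Y)Z=\nabla^\lambda_X\nabla^\lambda_YZ-(-1)^{|X||Y|}\nabla^\lambda_Y\nabla^\lambda_XZ-\nabla^\lambda_{[X,Y]}Z$$
by substituting the explicit values of $\nabla^\lambda$ on the frame $\{L_{\overline{X_j}},i_{\overline{X_j}}\}$ given in Lemma 5.4. Everything reduces to bracket bookkeeping, so first I would fix the structure constants via $[\overline{X_k},\overline{X_l}]=\Sigma_\mu C^\mu_{kl}\overline{X_\mu}$ and record the two facts I will use repeatedly: the brackets of the derivations satisfy $[L_{\overline{X}},L_{\overline{Y}}]=L_{[\overline{X},\overline{Y}]}$, $[L_{\overline{X}},i_{\overline{Y}}]=i_{[\overline{X},\overline{Y}]}$, and $[i_{\overline{X}},i_{\overline{Y}}]=0$, and that by Lemma 5.4 the connection $\nabla^\lambda$ always outputs $\lambda$ times a bracket (or $0$ when both arguments are $i$-type). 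This immediately kills all the cases whose statement asserts $0$: whenever the innermost $\nabla^\lambda_Y Z$ lands on an $i$-$i$ pair, or whenever two $i$-type arguments meet at the outer stage, the corresponding term vanishes, which explains the vanishing of $R^\lambda(L,i)i$, $R^\lambda(i,i)L$, and $R^\lambda(i,i)i$.

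For the nontrivial first case $R^\lambda(L_{\overline{X_j}},L_{\overline{X_k}})L_{\overline{X_l}}$ I would expand each of the three terms. The first term is $\nabla^\lambda_{L_{\overline{X_j}}}(\lambda L_{[\overline{X_k},\overline{X_l}]})$; here the Leibniz rule applies because $L_{[\overline{X_k},\overline{X_l}]}=\Sigma_\mu C^\mu_{kl}L_{\overline{X_\mu}}$ has the coefficient $C^\mu_{kl}\in\Omega(M)$ in front of the frame field, so I get a derivative term $\lambda\Sigma_\mu \overline{X_j}(C^\mu_{kl})L_{\overline{X_\mu}}$ plus a term $\lambda^2 L_{[\overline{X_j},[\overline{X_k},\overline{X_l}]]}$. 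The symmetric second term contributes the analogous expression with $j,k$ swapped and the overall sign $-(-1)^{|L||L|}=-1$. The third term $-\nabla^\lambda_{[L_{\overline{X_j}},L_{\overline{X_k}}]}L_{\overline{X_l}}=-\nabla^\lambda_{L_{[\overline{X_j},\overline{X_k}]}}L_{\overline{X_l}}$ again splits by Leibniz into $-\lambda\,\Sigma_\mu \overline{X_l}(C^\mu_{jk})L_{\overline{X_\mu}}\cdot(\pm)$ plus $-\lambda^2 L_{[[\overline{X_j},\overline{X_k}],\overline{X_l}]}$. Collecting the quadratic-in-$\lambda$ pieces and applying the ordinary Jacobi identity on $\Gamma(M,TM)$ to $[\overline{X_j},[\overline{X_k},\overline{X_l}]]-[\overline{X_k},[\overline{X_j},\overline{X_l}]]$ reduces them to $(\lambda^2-\lambda)L_{[[\overline{X_j},\overline{X_k}],\overline{X_l}]}$, while the linear-in-$\lambda$ pieces assemble into $(\lambda-\lambda^2)L_{\Sigma_\mu[\overline{X_j}(C^\mu_{kl})-\overline{X_k}(C^\mu_{jl})]\overline{X_\mu}}-\lambda L_{\Sigma_\mu\overline{X_l}(C^\mu_{jk})\overline{X_\mu}}$, matching the stated formula. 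The second and third cases are computed identically, simply replacing the functor $L_{(-)}$ by $i_{(-)}$ on the appropriate slots, which is why their right-hand sides are the same expressions with $L$ replaced by $i$; the only point to verify is that the mixed bracket rules $[L_{\overline{X}},i_{\overline{Y}}]=i_{[\overline{X},\overline{Y}]}$ feed the derivation type through correctly, so that e.g. $R^\lambda(L,i)L$ outputs an $i$-type derivation.

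The main obstacle is purely organizational rather than conceptual: keeping the $\mathbb{Z}_2$-grading signs straight while merging the Jacobi-identity cancellation (which operates on the underlying vector fields through $\nabla^g$-free brackets) with the Leibniz-generated coefficient-derivative terms. Since all the derivations $L_{\overline{X_j}}$ are even and the $i_{\overline{X_j}}$ are odd, the commutator signs $(-1)^{|X||Y|}$ in the curvature are either $+1$ (when at least one argument is even) or could flip for two odd arguments — but those two-odd-argument cases are exactly the ones that vanish, so no sign subtleties actually survive into the nonzero formulas. I would therefore carry out the even-argument computations first, verify the Jacobi cancellation yields the factor $(\lambda^2-\lambda)$ and $(\lambda-\lambda^2)$ pattern once, and then invoke the uniform replacement $L\mapsto i$ to read off the remaining two nonvanishing identities, checking separately that every case terminating in an $i$-$i$ pairing or built from $[i,i]=0$ produces zero.
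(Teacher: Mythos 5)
Your overall strategy --- expand $R^\lambda$ from its definition, substitute the values of $\nabla^\lambda$ on the frame from Lemma 5.4, apply the Leibniz rule to the structure functions $C^\mu_{kl}$, and close with the Jacobi identity --- is exactly what the paper intends (it offers no proof beyond ``By Lemma 5.4, we have''), and your treatment of the first identity and of the three vanishing cases is essentially sound.

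However, the claim that ``the second and third cases are computed identically, simply replacing $L$ by $i$'' has a genuine gap in the third case. The replacement works for $R^\lambda(L_{\overline{X_j}},L_{\overline{X_k}})i_{\overline{X_l}}$ because both outer derivations are still Lie derivatives, so every Leibniz term $L_{\overline{X_j}}(C^\mu_{kl})=\overline{X_j}(C^\mu_{kl})$ survives unchanged. But in $R^\lambda(L_{\overline{X_j}},i_{\overline{X_k}})L_{\overline{X_l}}$ the second curvature term is $\nabla^\lambda_{i_{\overline{X_k}}}\bigl(\lambda\Sigma_\mu C^\mu_{jl}L_{\overline{X_\mu}}\bigr)$, and $i_{\overline{X_k}}(C^\mu_{jl})=0$ because the structure functions are $0$-forms; the Leibniz derivative term that supplied the ``$-\lambda$'' part of the coefficient $-(\lambda-\lambda^2)$ in case 1 is simply absent here. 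Carrying the computation through gives coefficient $\lambda^2$, not $\lambda^2-\lambda$, on $i_{\Sigma_\mu\overline{X_k}(C^\mu_{jl})\overline{X_\mu}}$; the two agree only when the $C^\mu_{jl}$ are constant. So the uniform-replacement argument does not establish the third identity as printed: you must compute that case separately, and your (correct) method in fact exposes a discrepancy with the stated formula that you should address rather than paper over. A smaller point: in case 1 you write the second Leibniz piece as $\lambda^2 L_{[\overline{X_j},[\overline{X_k},\overline{X_l}]]}$, but what the Leibniz rule actually produces is $\lambda^2\Sigma_\mu C^\mu_{kl}L_{[\overline{X_j},\overline{X_\mu}]}$, which differs from that by $-\lambda^2 L_{\Sigma_\mu\overline{X_j}(C^\mu_{kl})\overline{X_\mu}}$; your final coefficient $(\lambda-\lambda^2)$ shows you accounted for this correction in the end, but the intermediate identity as written is false and should be restated.
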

By the definition 2.7 and Lemma 5.5, we get
\begin{thm}
$\nabla^\lambda$ is Ricci flat.
\end{thm}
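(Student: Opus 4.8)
**The plan is to compute the Ricci tensor of $\nabla^\lambda$ directly from Definition 2.7, using the curvature formulas already established in Lemma 5.5.**

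First I would recall the definition of the Ricci curvature from Definition 2.7, namely
\begin{equation}
{\rm Ric}^\lambda(X,Y)=\sum_{k=1}^m\left<R^\lambda(L_{\overline{X_k}},X)Y,i_{\overline{X_k}};G_g\right>-(-1)^{|X|+|Y|}\sum_{l=1}^m\left<R^\lambda(i_{\overline{X_l}},X)Y,L_{\overline{X_l}};G_g\right>.\notag
\end{equation}
Since ${\rm Der}(\Omega(M))$ is spanned by the $L_{\overline{X_j}}$ and the $i_{\overline{X_j}}$, it suffices to evaluate ${\rm Ric}^\lambda$ on each pair of basis derivations; I would therefore compute ${\rm Ric}^\lambda$ on the four types of arguments $(L_{\overline{X_p}},L_{\overline{X_q}})$, $(L_{\overline{X_p}},i_{\overline{X_q}})$, $(i_{\overline{X_p}},L_{\overline{X_q}})$, and $(i_{\overline{X_p}},i_{\overline{X_q}})$.

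Next I would substitute the curvature values from Lemma 5.5 into each of these sums. The key structural observation is that every nonzero curvature term in Lemma 5.5 is of the form $L_{V}$ or $i_{V}$ for some vector field $V$ on $M$ (for instance $V=[[\overline{X_j},\overline{X_k}],\overline{X_l}]$ or $V=\Sigma_\mu\overline{X_l}(C^\mu_{jk})\overline{X_\mu}$). Pairing such terms against $i_{\overline{X_k}}$ or $L_{\overline{X_l}}$ via the relations in (2.6), namely $\left<L_{\overline{X_k}},i_{\overline{X_l}};G_g\right>=\delta_{kl}$ while $\left<L,L\right>=\left<i,i\right>=0$, turns each contraction into an ordinary trace over the orthonormal frame. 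The entries $R^\lambda(i_{\overline{X}},i_{\overline{X}})(\cdot)=0$ immediately kill several summands, and the two sums in the definition are designed to cancel against each other by the graded sign.

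The main obstacle I expect is bookkeeping rather than conceptual: I must track the graded signs $(-1)^{|X|+|Y|}$ together with the skew-symmetry pattern of the structure constants, and verify that the ordinary ``Ricci-type'' traces $\Sigma_k$ of $[[\overline{X_k},\overline{X_q}],\cdot]$-terms cancel against the trace of the $\overline{X_l}(C^\mu_{\cdot\cdot})$-terms. The cleanest route is to note that $\nabla^\lambda$ is essentially built from the flat canonical connection $\nabla^c$ and its dual, so the curvature is quadratic in $\lambda$ with vanishing trace; concretely, I would show that after contraction the coefficient of each independent trace of the structure constants vanishes identically, using the Jacobi identity (4.6) for $[[\overline{X_j},\overline{X_k}],\overline{X_l}]$ to pair the $\lambda^2-\lambda$ terms against the $\lambda-\lambda^2$ terms, and using the symmetry of the contraction to annihilate the remaining $-\lambda$ term. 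Once all four basis contractions are seen to vanish, bilinearity gives ${\rm Ric}^\lambda\equiv 0$, completing the proof.
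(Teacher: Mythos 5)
Your proposal is correct and follows essentially the same route as the paper, whose entire proof is to substitute the curvature values of Lemma 5.5 into the trace formula of Definition 2.7 and observe that every contraction vanishes. The only small inaccuracy is the suggested reliance on the Jacobi identity: in fact the cancellation happens directly between the two sums in Definition 2.7 (via $\left<L_V,i_{\overline{X_k}};G_g\right>=\left<i_V,L_{\overline{X_k}};G_g\right>=g(V,\overline{X_k})$ and the antisymmetry $C^\mu_{jk}=-C^\mu_{kj}$), with no Jacobi identity needed.
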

Let $\nabla^\omega_XY=(1-\omega)\nabla^c_XY+\omega\widetilde{\nabla}_XY$ for $\omega\in \Omega^{\rm even}(M)$. Then $\nabla^\omega$ is a connection.
Similar to Lemma 5.5, we have
\begin{lem}
If $C^\mu_{kl}$ is a constant, then the following equalities hold
\begin{align}
&R^\omega(L_{\overline{X_j}},L_{\overline{X_k}})L_{\overline{X_l}}=L_{\overline{X_j}}(\omega)L_{[\overline{X_k},\overline{X_l}]}-
L_{\overline{X_k}}(\omega)L_{[\overline{X_j},\overline{X_l}]}
+(\omega^2-\omega)L_{[[\overline{X_j},\overline{X_k}],\overline{X_l}]}
;\\
&R^\omega(L_{\overline{X_j}},L_{\overline{X_k}})i_{\overline{X_l}}=
L_{\overline{X_j}}(\omega)i_{[\overline{X_k},\overline{X_l}]}-
L_{\overline{X_k}}(\omega)i_{[\overline{X_j},\overline{X_l}]}
+(\omega^2-\omega)i_{[[\overline{X_j},\overline{X_k}],\overline{X_l}]}
;\notag\\
&R^\omega(L_{\overline{X_j}},i_{\overline{X_k}})L_{\overline{X_l}}=
L_{\overline{X_j}}(\omega)i_{[\overline{X_k},\overline{X_l}]}-
i_{\overline{X_k}}(\omega)L_{[\overline{X_j},\overline{X_l}]}
+(\omega^2-\omega)i_{[[\overline{X_j},\overline{X_k}],\overline{X_l}]}
;\notag\\
&R^\omega(L_{\overline{X_j}},i_{\overline{X_k}})i_{\overline{X_l}}=-i_{\overline{X_k}}(\omega)i_{[\overline{X_j},\overline{X_l}]};~~\notag\\
&R^\omega(i_{\overline{X_j}},i_{\overline{X_k}})L_{\overline{X_l}}=i_{\overline{X_j}}(\omega)i_{[\overline{X_k},\overline{X_l}]}
+i_{\overline{X_k}}(\omega)i_{[\overline{X_j},\overline{X_l}]}
;~~
R^\omega(i_{\overline{X_j}},i_{\overline{X_k}})i_{\overline{X_l}}=0.\notag
\end{align}
\end{lem}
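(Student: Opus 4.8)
**The plan is to compute the curvature $R^\omega$ directly from its definition, reusing the structure of the proof of Lemma 5.5.** The connection $\nabla^\omega_XY=(1-\omega)\nabla^c_XY+\omega\widetilde{\nabla}_XY$ differs from $\nabla^\lambda$ only in that the constant $\lambda$ is replaced by an even form $\omega\in\Omega^{\rm even}(M)$. Because $\nabla^c$ annihilates every basis derivation $L_{\overline{X_j}},i_{\overline{X_j}}$ and $\widetilde{\nabla}_XY=(-1)^{|X||Y|}\nabla^c_YX+[X,Y]$, the analogue of Lemma 5.4 is immediate: on basis elements we get $\nabla^\omega_{L_{\overline{X_j}}}L_{\overline{X_l}}=\omega\,L_{[\overline{X_j},\overline{X_l}]}$, and similarly for the mixed and $i$-$i$ cases, exactly as in (5.4) with $\lambda$ replaced by $\omega$. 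The essential new feature is that $\omega$ is no longer a scalar, so when I apply the outer covariant derivative in $R^\omega(X,Y)Z=\nabla^\omega_X\nabla^\omega_YZ-(-1)^{|X||Y|}\nabla^\omega_Y\nabla^\omega_XZ-\nabla^\omega_{[X,Y]}Z$, the Leibniz rule from Definition 2.2(3) produces derivative terms $X(\omega)$ that did not appear in the constant case.

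First I would record the first-order action of $\nabla^\omega$ on the frame, then substitute into the definition of $R^\omega$ term by term. For instance, to obtain the first line I expand $\nabla^\omega_{L_{\overline{X_j}}}\nabla^\omega_{L_{\overline{X_k}}}L_{\overline{X_l}}=\nabla^\omega_{L_{\overline{X_j}}}\!\bigl(\omega\,L_{[\overline{X_k},\overline{X_l}]}\bigr)$. By the Leibniz rule this splits into $L_{\overline{X_j}}(\omega)\,L_{[\overline{X_k},\overline{X_l}]}$ plus $\omega\,\nabla^\omega_{L_{\overline{X_j}}}L_{[\overline{X_k},\overline{X_l}]}$, and the second piece contributes a factor $\omega^2$ after applying the frame formula again. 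The derivative terms assemble into the $L_{\overline{X_j}}(\omega)L_{[\overline{X_k},\overline{X_l}]}-L_{\overline{X_k}}(\omega)L_{[\overline{X_j},\overline{X_l}]}$ part, while the $\omega^2$ terms together with the $-\nabla^\omega_{[L_{\overline{X_j}},L_{\overline{X_k}}]}L_{\overline{X_l}}$ contribution (which carries a single factor $\omega$) combine into the Jacobi-type term $(\omega^2-\omega)L_{[[\overline{X_j},\overline{X_k}],\overline{X_l}]}$.

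The hypothesis that $C^\mu_{kl}$ is constant is exactly what keeps the computation clean: it guarantees $L_{\overline{X_j}}(C^\mu_{kl})=i_{\overline{X_j}}(C^\mu_{kl})=0$, so that the messy correction terms involving $\overline{X_j}(C^\mu_{kl})$ that appeared in Lemma 5.5 now vanish, leaving only the derivatives of $\omega$ itself. The remaining cases are handled identically, tracking the Koszul sign $(-1)^{|X||Y|}$ in the second summand of $R^\omega$; the mixed cases $R^\omega(L_{\overline{X_j}},i_{\overline{X_k}})i_{\overline{X_l}}$ and $R^\omega(i_{\overline{X_j}},i_{\overline{X_k}})L_{\overline{X_l}}$ pick up pure derivative terms because $\nabla^\omega_{i_{\overline{X}}}i_{\overline{Y}}=0$ kills the $\omega^2$ contributions, which is why those lines contain only $i_{\overline{X_k}}(\omega)$-type expressions and no Jacobi term.

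The main obstacle I anticipate is bookkeeping rather than conceptual: correctly tracking the $\mathbb{Z}_2$-degrees and the resulting signs in each of the six cases, and making sure that the Leibniz-derivative terms and the bracket term $-\nabla^\omega_{[X,Y]}Z$ are distributed with the right factors of $\omega$ versus $\omega^2$. Since $\omega$ is even, it commutes with the basis derivations up to trivial sign, which simplifies the signs considerably; the verification then reduces to confirming that the stated right-hand sides match the expanded expressions in each case, which is the routine part of the argument.
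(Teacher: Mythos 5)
Your proposal is correct and follows essentially the same route as the paper, which proves this lemma only by the remark ``Similar to Lemma 5.5'', i.e.\ by direct expansion of $R^\omega$ from the frame formulas $\nabla^\omega_{L_{\overline{X_j}}}L_{\overline{X_l}}=\omega L_{[\overline{X_j},\overline{X_l}]}$, $\nabla^\omega_{i_{\overline{X_j}}}i_{\overline{X_l}}=0$, etc., with the Leibniz rule supplying the $X(\omega)$ terms and the Jacobi identity together with the constancy of $C^\mu_{kl}$ collapsing the second-order terms into $(\omega^2-\omega)$ times a single iterated bracket. Your explanation of why the $L$--$i$ and $i$--$i$ cases retain only derivative terms (because $\nabla^\omega_{i_{\overline{X}}}i_{\overline{Y}}=0$ and $[i_{\overline{X}},i_{\overline{Y}}]=0$) is exactly the mechanism at work.
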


By the definition 2.7 and Lemma 5.7, we get
\begin{thm}
If $C^\mu_{kl}$ is a constant, then the following equalities hold
\begin{align}
&{\rm Ric}^\omega(L_{\overline{X_j}},L_{\overline{X_l}})=L_{[\overline{X_j},\overline{X_l}]}(\omega);~~
{\rm Ric}^\omega(L_{\overline{X_j}},i_{\overline{X_l}})={\rm Ric}^\omega(i_{\overline{X_j}},L_{\overline{X_l}})
=i_{[\overline{X_j},\overline{X_l}]}(\omega);~~
{\rm Ric}^\omega(i_{\overline{X_j}},i_{\overline{X_l}})=0.
\end{align}
In general, $\nabla^\omega$ is non Ricci flat.
\end{thm}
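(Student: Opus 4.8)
The plan is to insert the curvature values of Lemma 5.7 into the Ricci contraction of Definition 2.7 and to read off the surviving components using the orthonormality relations (2.5). Recall that $|L_{\overline{X}}|=0$ and $|i_{\overline{X}}|=1$, that $G_g(L_{\overline{Y}},i_{\overline{X_p}})=g(\overline{Y},\overline{X_p})$ and $G_g(i_{\overline{Y}},L_{\overline{X_p}})=g(\overline{Y},\overline{X_p})$, while $G_g(L_{\overline{Y}},L_{\overline{Z}})=G_g(i_{\overline{Y}},i_{\overline{Z}})=0$. Since the target slot of the curvature in Definition 2.7 ranges over $i_{\overline{X_k}}$ in the first sum and over $L_{\overline{X_l}}$ in the second, I would first use the graded antisymmetry $R^\omega(i_{\overline{X_p}},X)=-(-1)^{|X|}R^\omega(X,i_{\overline{X_p}})$, which is immediate from Definition 2.6 together with $[Y,X]=-(-1)^{|X||Y|}[X,Y]$, to rewrite every $R^\omega(i_{\overline{X_p}},\cdot)$ appearing in the second sum in terms of the $R^\omega(L,i)$- and $R^\omega(L,L)$-type curvatures that Lemma 5.7 records explicitly.

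I would then treat the four cases. For ${\rm Ric}^\omega(L_{\overline{X_j}},L_{\overline{X_l}})$ (here $(-1)^{|X|+|Y|}=1$) the first sum contracts the purely $L$-valued expression for $R^\omega(L_{\overline{X_k}},L_{\overline{X_j}})L_{\overline{X_l}}$ against $i_{\overline{X_k}}$, producing $L_{[\overline{X_j},\overline{X_l}]}(\omega)$ together with a trace term $-L_{\overline{X_j}}(\omega)\sum_k C^k_{kl}$ and a double-bracket term $(\omega^2-\omega)\sum_k g([[\overline{X_k},\overline{X_j}],\overline{X_l}],\overline{X_k})$; the rewritten second sum contributes exactly the opposite trace and double-bracket pieces, after using $g([\overline{X_p},\overline{X_l}],\overline{X_p})=C^p_{pl}$ and $[\overline{X_k},\overline{X_j}]=-[\overline{X_j},\overline{X_k}]$. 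These spurious pieces cancel, leaving $L_{[\overline{X_j},\overline{X_l}]}(\omega)$ via the identity $\sum_\mu C^\mu_{jl}L_{\overline{X_\mu}}(\omega)=L_{[\overline{X_j},\overline{X_l}]}(\omega)$. The mixed cases are quicker: for ${\rm Ric}^\omega(L_{\overline{X_j}},i_{\overline{X_l}})$ the first sum dies because $R^\omega(L_{\overline{X_k}},L_{\overline{X_j}})i_{\overline{X_l}}$ is of $i$-type and $G_g(i,i)=0$, while the surviving second sum (with sign $-(-1)^{|X|+|Y|}=+1$) collapses through $R^\omega(i_{\overline{X_p}},L_{\overline{X_j}})i_{\overline{X_l}}=i_{\overline{X_p}}(\omega)i_{[\overline{X_j},\overline{X_l}]}$ to $i_{[\overline{X_j},\overline{X_l}]}(\omega)$; the pair $(i_{\overline{X_j}},L_{\overline{X_l}})$ is identical by the symmetry of $G_g$. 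Finally ${\rm Ric}^\omega(i_{\overline{X_j}},i_{\overline{X_l}})=0$, since both relevant curvatures, namely $R^\omega(L,i)i$ and $R^\omega(i,i)i$, are of $i$-type and $G_g(i,i)=0$.

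For the last assertion I would simply observe that the formulas just obtained are not identically zero: choosing the parallelism so that some bracket $[\overline{X_j},\overline{X_l}]\neq 0$ and an even form $\omega$ with $L_{[\overline{X_j},\overline{X_l}]}(\omega)\neq 0$ produces a nonvanishing Ricci component, so $\nabla^\omega$ fails to be Ricci flat in general, in contrast with the constant-coefficient connection $\nabla^\lambda$ of Theorem 5.6. The main obstacle I anticipate is purely the bookkeeping: correctly propagating the $\mathbb{Z}_2$-signs through Definition 2.7 and verifying that the trace term $\sum_k C^k_{kl}$ and the double-bracket contraction cancel between the two sums rather than reinforce each other; everything else reduces to a direct substitution and contraction.
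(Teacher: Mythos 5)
Your proposal is correct and is precisely the paper's (unwritten) argument: the paper merely asserts that the theorem follows from Definition 2.7 and Lemma 5.7, and your substitution-plus-contraction — including the graded antisymmetry $R^\omega(i_{\overline{X_p}},X)=-(-1)^{|X|}R^\omega(X,i_{\overline{X_p}})$ and the cancellation of the trace terms $\sum_k C^k_{kl}$ and of the double-bracket contractions between the two sums of the Ricci definition — fills in exactly that computation. The one weak point is justifying ${\rm Ric}^\omega(i_{\overline{X_j}},L_{\overline{X_l}})={\rm Ric}^\omega(L_{\overline{X_j}},i_{\overline{X_l}})$ ``by the symmetry of $G_g$'': the Ricci tensor of a connection with torsion need not be symmetric, so this case should be computed directly from the $R^\omega(L_{\overline{X_k}},i_{\overline{X_j}})L_{\overline{X_l}}$ and $R^\omega(i_{\overline{X_p}},i_{\overline{X_j}})L_{\overline{X_l}}$ entries of Lemma 5.7 (where the same trace cancellation occurs and does yield $i_{[\overline{X_j},\overline{X_l}]}(\omega)$); also, the orthonormality relations you invoke are equation (2.3), not (2.5).
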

By the definitions of the Lie derivative of connections and $\widetilde{\nabla}$ and the graded Jacobi identity, we have
\begin{prop}
On a parallelizable manifold, we have $(L_X\widetilde{\nabla})(Y,Z)=(-1)^{|Y||Z|}(L_X {\nabla}^c)(Y,Z)$ for $X\in {\rm Der}\Omega(M)$.
\end{prop}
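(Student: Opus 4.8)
The plan is to prove the identity by a direct expansion from the two governing definitions, to isolate a ``pure bracket'' part that is killed by the graded Jacobi identity, and to recognize the surviving part as the stated multiple of $L_X(\nabla^c)$. First I would write out, for arbitrary homogeneous $X,Y,Z\in {\rm Der}\Omega(M)$, the definition (Definition 4.1, taken on all of ${\rm Der}\Omega(M)$, so that $[\,\cdot\,,\cdot\,]^D=[\,\cdot\,,\cdot\,]$ and $L_XY=[X,Y]$)
$$L_X(\widetilde{\nabla})(Y,Z)=[X,\widetilde{\nabla}_YZ]-\widetilde{\nabla}_{[X,Y]}Z-(-1)^{|X||Y|}\widetilde{\nabla}_Y([X,Z]),$$
and then substitute the defining formula of the dual connection from Definition 5.3, namely $\widetilde{\nabla}_YZ=(-1)^{|Y||Z|}\nabla^c_ZY+[Y,Z]$, into all three terms. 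Expanding $[X,\widetilde{\nabla}_YZ]$ uses the graded Leibniz rule for the commutator, $[X,\alpha W]=X(\alpha)W+(-1)^{|X||\alpha|}\alpha[X,W]$, together with $\mathbb{R}$-bilinearity of the bracket; the degrees $|[X,Y]|=|X|+|Y|$ and $|[X,Z]|=|X|+|Z|$ must be inserted when the accumulated signs are commuted past $\nabla^c$.

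Next I would split the result into two groups. The terms carrying no $\nabla^c$ are exactly
$$[X,[Y,Z]]-[[X,Y],Z]-(-1)^{|X||Y|}[Y,[X,Z]],$$
which is precisely the left-hand side of the graded Jacobi identity (4.6) and hence vanishes. The terms carrying $\nabla^c$, after simplifying the Koszul-type signs $(-1)^{|Y||Z|}$, $(-1)^{(|X|+|Y|)|Z|}$, $(-1)^{|Y|(|X|+|Z|)}$ and using $(-1)^{2|X||Y|}=1$, collect into
$$(-1)^{|Y||Z|}\Big([X,\nabla^c_ZY]-\nabla^c_{[X,Z]}Y-(-1)^{|X||Z|}\nabla^c_Z([X,Y])\Big).$$
The bracketed factor is exactly the combination defining the Lie derivative of $\nabla^c$ on the relevant pair of derivations (Definition 4.1 applied to $\nabla^c$), so that $L_X(\widetilde{\nabla})(Y,Z)=(-1)^{|Y||Z|}L_X(\nabla^c)(Y,Z)$, as asserted.

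The main obstacle I anticipate is purely the sign and argument bookkeeping: one must verify that after the Jacobi cancellation the two surviving $\nabla^c$-contributions pair off with the correct differentiation direction and a single overall factor $(-1)^{|Y||Z|}$, rather than some mismatched sign. As a safeguard, since $\nabla^c$ is flat and completely explicit on the global frame $\{L_{\overline{X_j}},i_{\overline{X_j}}\}$, one can independently check the identity by evaluating both sides on frame elements, where $\nabla^c_W E_a=0$ reduces every term to a derivative of the structure functions of the operator $[X,\cdot]$; this frame computation confirms the sign tracking carried out symbolically above.
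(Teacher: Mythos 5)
Your route is the same as the paper's (its entire proof of this proposition is the one-line remark ``by the definitions of the Lie derivative of connections and $\widetilde{\nabla}$ and the graded Jacobi identity''), and your expansion is correct up to and including the Jacobi cancellation: the surviving part is indeed
\begin{equation*}
(-1)^{|Y||Z|}\Bigl([X,\nabla^c_ZY]-\nabla^c_{[X,Z]}Y-(-1)^{|X||Z|}\nabla^c_Z([X,Y])\Bigr),
\end{equation*}
which also agrees with the third line of the paper's own computation in the proof of Proposition 5.10. The gap is in your final identification. In Definition 4.1 the \emph{first} slot of $L_X\nabla$ is the direction of differentiation, so the bracketed expression above is $(L_X\nabla^c)(Z,Y)$, not $(L_X\nabla^c)(Y,Z)$: it differentiates along $Z$, whereas $(L_X\nabla^c)(Y,Z)=[X,\nabla^c_YZ]-\nabla^c_{[X,Y]}Z-(-1)^{|X||Y|}\nabla^c_Y([X,Z])$ differentiates along $Y$. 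These two are not interchangeable here, because a short computation using $T^c(Y,Z)=\nabla^c_YZ-(-1)^{|Y||Z|}\nabla^c_ZY-[Y,Z]$ and the Jacobi identity gives $(L_X\nabla^c)(Y,Z)-(-1)^{|Y||Z|}(L_X\nabla^c)(Z,Y)=(L_XT^c)(Y,Z)$, and by Proposition 5.2 the canonical connection has nonvanishing torsion, with $L_XT^c\neq 0$ in general. So what your argument actually establishes is $(L_X\widetilde{\nabla})(Y,Z)=(-1)^{|Y||Z|}(L_X\nabla^c)(Z,Y)$; to reach the statement with the arguments in the printed order you would need the extra hypothesis $L_XT^c=0$, which you have not supplied.

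Your proposed safeguard does not rescue this: evaluating on frame elements is legitimate (both sides are $\Omega(M)$-linear in $Y$ and $Z$), but if you carry it out with $Y=E_a$, $Z=E_b$ and $\nabla^c E_a=0$, your surviving expression reduces to $-(-1)^{|E_a||E_b|}(-1)^{|X||E_b|}\nabla^c_{E_b}([X,E_a])$, i.e.\ $E_b$-derivatives of the components of $[X,E_a]$, while $(-1)^{|E_a||E_b|}(L_X\nabla^c)(E_a,E_b)=-(-1)^{|E_a||E_b|}(-1)^{|X||E_a|}\nabla^c_{E_a}([X,E_b])$ produces $E_a$-derivatives of the components of $[X,E_b]$. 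The frame check therefore exposes the transposition rather than confirming the claimed identity. You should either prove the identity with $(Z,Y)$ on the right-hand side (which is what your computation gives, and is presumably what the statement intends), or explain why the discrepancy $(L_XT^c)(Y,Z)$ vanishes in your setting; as written, the last step does not follow.
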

\begin{prop}
On a parallelizable manifold, if $\nabla^cX=0$ for $X\in {\rm Der}\Omega(M)$, then we have
\begin{align}
&(L_X\widetilde{\nabla})(Y,Z)=-(-1)^{|Y||Z|}T^c(X,\nabla^c_ZY)+(-1)^{(|X|+|Y|)|Z|}\nabla^c_Z(T^c(X,Y)).
\end{align}
\end{prop}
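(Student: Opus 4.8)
The plan is to expand the left-hand side straight from the definition of the Lie derivative of a connection (the evident extension of Definition 4.1 to all of ${\rm Der}\Omega(M)$, with $L_X(\cdot)=[X,\cdot]$),
\[
(L_X\widetilde{\nabla})(Y,Z)=[X,\widetilde{\nabla}_YZ]-\widetilde{\nabla}_{[X,Y]}Z-(-1)^{|X||Y|}\widetilde{\nabla}_Y([X,Z]),
\]
and then substitute the defining formula $\widetilde{\nabla}_AB=(-1)^{|A||B|}\nabla^c_BA+[A,B]$ of Definition 5.3 into each of the three occurrences of $\widetilde{\nabla}$. First I would carry out this substitution and split each term into its $\nabla^c$-part and its bracket-part, tracking the Koszul signs $(-1)^{|A||B|}$ produced when $X$ is moved past the arguments.

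The three bracket-parts that arise are precisely $[X,[Y,Z]]$, $-[[X,Y],Z]$ and $-(-1)^{|X||Y|}[Y,[X,Z]]$; by the graded Jacobi identity (4.6) their sum vanishes, so they drop out. This is the first clean simplification, and what survives is
\[
(L_X\widetilde{\nabla})(Y,Z)=(-1)^{|Y||Z|}[X,\nabla^c_ZY]-(-1)^{(|X|+|Y|)|Z|}\nabla^c_Z([X,Y])-(-1)^{|Y||Z|}\nabla^c_{[X,Z]}Y.
\]
Next I would invoke the hypothesis $\nabla^cX=0$. Fed into the torsion formula $T^c(X,W)=\nabla^c_XW-(-1)^{|X||W|}\nabla^c_WX-[X,W]$, it yields the key replacement $[X,W]=\nabla^c_XW-T^c(X,W)$, valid for every $W$. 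Applying this to the three brackets $[X,\nabla^c_ZY]$, $[X,Y]$ and $[X,Z]$ turns the surviving expression into six terms, two of which are exactly $-(-1)^{|Y||Z|}T^c(X,\nabla^c_ZY)$ and $(-1)^{(|X|+|Y|)|Z|}\nabla^c_Z(T^c(X,Y))$, i.e.\ the claimed right-hand side.

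Finally, the remaining four terms, after factoring out $(-1)^{|Y||Z|}$ and using $(-1)^{(|X|+|Y|)|Z|}=(-1)^{|X||Z|}(-1)^{|Y||Z|}$, read
\[
\nabla^c_X\nabla^c_ZY-(-1)^{|X||Z|}\nabla^c_Z\nabla^c_XY-\nabla^c_{\nabla^c_XZ}Y+\nabla^c_{T^c(X,Z)}Y,
\]
and a further use of $[X,Z]=\nabla^c_XZ-T^c(X,Z)$ identifies this as $R^c(X,Z)Y$, the curvature of the canonical connection evaluated on $(X,Z,Y)$. Since $\nabla^c$ is flat (Proposition 5.2), this block is zero, and the two target terms alone remain, establishing the equality. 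The step I expect to be the main obstacle is not conceptual but combinatorial: recognizing that the leftover second-order terms reassemble \emph{exactly} into $R^c(X,Z)Y$ with the correct Koszul signs, so that flatness can close the argument; careful sign bookkeeping throughout is the only genuine hazard.
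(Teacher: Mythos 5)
Your proposal is correct and follows essentially the same route as the paper's own proof: expand the Lie derivative, substitute $\widetilde{\nabla}_AB=(-1)^{|A||B|}\nabla^c_BA+[A,B]$, kill the pure bracket terms with the graded Jacobi identity, convert the remaining brackets via the torsion formula together with $\nabla^cX=0$, and absorb the leftover second-order block into $R^c(X,Z)Y=0$ by flatness. The only (immaterial) difference is that you expand $\nabla^c_{[X,Z]}Y$ into $\nabla^c_{\nabla^c_XZ}Y-\nabla^c_{T^c(X,Z)}Y$ before reassembling the curvature, whereas the paper keeps that term intact.
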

\begin{proof}
By the Jacobi identity and $[X,Y]=-T^c(X,Y)+\nabla^c_XY-(-1)^{|X||Y|}\nabla^c_YX$ and $\nabla^cX=0$ and $\nabla^c$ is flat, we have
\begin{align}
(L_X\widetilde{\nabla})(Y,Z)&=L_X(\widetilde{\nabla}_YZ)-\widetilde{\nabla}_{[X,Y]}Z-(-1)^{|X||Y|}\widetilde{\nabla}_Y[X,Z]\\
&=L_X((-1)^{|Y||Z|}\nabla^c_ZY+[Y,Z])-(-1)^{(|X|+|Y|)|Z|}\nabla^c_Z[X,Y]-[[X,Y],Z]\notag\\
&-(-1)^{|X||Y|}(-1)^{|Y|(|X|+|Z|)}\nabla^c_{[X,Z]}Y
-(-1)^{|X||Y|}[Y,[X,Z]]\notag\\
&=(-1)^{|Y||Z|}L_X\nabla^c_ZY-(-1)^{(|X|+|Y|)|Z|}\nabla^c_Z[X,Y]-(-1)^{|Y||Z|}\nabla^c_{[X,Z]}Y\notag\\
&=(-1)^{|Y||Z|}[-T^c(X,\nabla^c_ZY)+\nabla^c_X\nabla^c_ZY-(-1)^{|X||\nabla^c_ZY|}\nabla^c_{\nabla^c_ZY}X\notag\\
&-(-1)^{|X||Z|}\nabla^c_Z(\nabla^c_XY-(-1)^{|X||Y|}\nabla^c_YX-T^c(X,Y))-\nabla^c_{[X,Z]}Y]\notag\\
&=(-1)^{|Y||Z|}[-T^c(X,\nabla^c_ZY)+R^c(X,Z)Y+(-1)^{|X||Z|}\nabla^c_Z(T^c(X,Y))]\notag\\
&=-(-1)^{|Y||Z|}T^c(X,\nabla^c_ZY)+(-1)^{(|X|+|Y|)|Z|}\nabla^c_Z(T^c(X,Y)).\notag
\end{align}
\end{proof}
\vskip 0.5 true cm
In the following, the setup is the same as Section 3. Let ${\rm Der}(\Omega(M))=D\oplus D^\bot.$ and
 $G=G^D\oplus G^{D^\bot}$ where $G^D$ and $ G^{D^\bot}$ are metric on $D$ and $D^\bot$ respectively.
Let $\pi^D:{\rm Der}(\Omega(M))\rightarrow D$, $\pi^{D^\bot}:{\rm Der}(\Omega(M))\rightarrow D^\bot$ be the projections with the even grading.
\begin{defn}
Given an any connection $\nabla$ on ${\rm Der}(\Omega(M))$, we define two linear connections:
the Schouten connection $\nabla^s$ and the Vranceanu connection $\nabla^v$ on ${\rm Der}(\Omega(M))$ respectively
by
\begin{align}
&\nabla^s_XY=\pi^D\nabla_X \pi^DY+\pi^{D^\bot}\nabla_X  \pi^{D^\bot}Y,
\end{align}
\begin{align}
&\nabla^v_XY=\pi^D\nabla_{\pi^D X} \pi^DY+\pi^{D^\bot}\nabla_{\pi^{D^\bot} X } \pi^{D^\bot}Y
+\pi^D[\pi^{D^\bot} X,\pi^DY]+\pi^{D^\bot}[\pi^D X,\pi^{D^\bot}Y].
\end{align}
\end{defn}
The proof of the following proposition is the same as \cite{Ia}.
\begin{prop}
1)Distributions $D,D^\bot$ are both parallel with respect to the connections $\nabla^s$ and $\nabla^v$.\\
2)The connection $\nabla^s$ is equal to the connection $\nabla$ if and only if
 Distributions $D,D^\bot$ are both parallel with respect to the connections $\nabla$.\\
 3)If the connection $\nabla$ is symmetric and $D,D^\bot$ are both integrable, then the connection $\nabla^v$ is symmetric.\\
 4)If the connection $\nabla^s$ or $\nabla^v$ is symmetric, then $D,D^\bot$ are both integrable.
\end{prop}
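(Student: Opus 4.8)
The plan is to prove the four assertions in turn, in every case reducing the statement to the decomposition $X = X_D + X_\bot$ with $X_D := \pi^D X$ and $X_\bot := \pi^{D^\bot}X$, and using repeatedly that $\pi^D$ and $\pi^{D^\bot}$ are even, idempotent, complementary projections, so that $|X_D| = |X_\bot| = |X|$ for homogeneous $X$ and no extra Koszul signs are created by projecting.

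Part 1) is a formal substitution. For homogeneous $Y \in D$ one has $\pi^D Y = Y$ and $\pi^{D^\bot}Y = 0$; inserting this into the defining formula for $\nabla^s$ leaves only $\pi^D\nabla_X Y$, which lies in $D$, and the analogous substitution $Y \in D^\bot$ gives $\nabla^s_X Y = \pi^{D^\bot}\nabla_X Y \in D^\bot$. The same substitution in the defining formula for $\nabla^v$ leaves only $\pi^D\nabla_{X_D}Y + \pi^D[X_\bot, Y]$ (resp. the two $\pi^{D^\bot}$-terms), so $D$ and $D^\bot$ are parallel for $\nabla^v$ as well. Part 2) is almost as short: if $D, D^\bot$ are parallel for $\nabla$ then $\nabla_X Y_D \in D$ and $\nabla_X Y_\bot \in D^\bot$, so the outer projections in the definition of $\nabla^s$ act as the identity and the two pieces reassemble to $\nabla_X(Y_D + Y_\bot) = \nabla_X Y$; conversely, if $\nabla^s = \nabla$ then part 1) already shows $\nabla$ preserves $D$ and $D^\bot$.

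The substance of the proposition is part 3), which I would prove by a direct computation of the torsion $T^{\nabla^v}(X,Y) = \nabla^v_X Y - (-1)^{|X||Y|}\nabla^v_Y X - [X,Y]$ for homogeneous $X, Y$. First I expand both $\nabla^v$-terms and the bracket $[X,Y] = [X_D, Y_D] + [X_D, Y_\bot] + [X_\bot, Y_D] + [X_\bot, Y_\bot]$ and split everything into its $D$- and $D^\bot$-components. The matching mixed bracket term $\pi^D[X_\bot, Y_D]$ occurring in $\nabla^v_X Y$ cancels directly against the corresponding piece of $[X,Y]$. On the diagonal, symmetry of $\nabla$ turns $\pi^D\nabla_{X_D}Y_D - (-1)^{|X||Y|}\pi^D\nabla_{Y_D}X_D$ into $\pi^D[X_D, Y_D]$, which by integrability of $D$ equals $[X_D, Y_D]$ and so cancels the diagonal bracket; integrability of $D^\bot$ does the same on the other component, and it is also what makes $\pi^D[X_\bot, Y_\bot] = 0$ and $\pi^{D^\bot}[X_D, Y_D] = 0$. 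What remains in the $D$-component is $-(-1)^{|X||Y|}\pi^D[Y_\bot, X_D] - \pi^D[X_D, Y_\bot]$, and here graded antisymmetry $[Y_\bot, X_D] = -(-1)^{|X||Y|}[X_D, Y_\bot]$ produces the sign $(-1)^{|X||Y|}(-1)^{|X||Y|} = 1$ needed for the two terms to annihilate; the $D^\bot$-component is handled symmetrically. The step I expect to be the main obstacle is precisely this sign-bookkeeping: each of the four bracket terms must be carried through both projections with its Koszul sign, and the cancellations only close because the projections are even.

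For part 4) I would specialize symmetry to arguments from a single distribution. For $X, Y \in D$ both definitions collapse to $\nabla^s_X Y = \pi^D\nabla_X Y \in D$ (and likewise $\nabla^v_X Y = \pi^D\nabla_X Y$), so vanishing of the torsion forces $[X,Y] = \nabla^s_X Y - (-1)^{|X||Y|}\nabla^s_Y X \in D$, i.e. $D$ is integrable; taking instead $X, Y \in D^\bot$, where both connections reduce to $\pi^{D^\bot}\nabla_X Y$, gives integrability of $D^\bot$. I would present parts 1), 2) and 4) as brief direct verifications and devote the bulk of the write-up to the graded torsion computation of part 3).
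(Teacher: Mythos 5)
Your proof is correct, and it is essentially the argument the paper intends: the paper supplies no proof of its own here, only the remark that the proof is the same as in Ianus's classical (ungraded) paper, and your direct verification is exactly that computation transported to the graded setting. Parts 1), 2) and 4) by substituting $\pi^D Y=Y$, $\pi^{D^\bot}Y=0$ (and the reverse) into the defining formulas, and part 3) by expanding $T^{\nabla^v}(X,Y)$, using symmetry of $\nabla$ on the diagonal terms, integrability to identify $\pi^D[X_D,Y_D]=[X_D,Y_D]$ and kill $\pi^D[X_\bot,Y_\bot]$, and graded antisymmetry with the evenness of the projections (so $|\pi^D X|=|X|$) to cancel the mixed brackets, all check out; the sign bookkeeping you flag as the delicate point does close as you describe.
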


\section{Acknowledgements}

The author was supported in part by  NSFC No.11771070.

\vskip 0.5 true cm

%-----------------------------------------------------------------------------
%-----------------------------------------------------------------------------

\bigskip

\noindent {\footnotesize {\it Yong Wang} \\
{School of Mathematics and Statistics, Northeast Normal University, Changchun 130024, China}\\
{Email: wangy581@nenu.edu.cn}

\end{document}